\def\gnk{G_{n,k}}
\def\agnk{ G(n,k)}
\def\Cal{\mathcal}
\def\H{{\Cal H}}
\def\R{{\Cal R}}
\def\gnk{G_{n,k}}
\def\bbr{{\Bbb R}}
\def\bbc{{\Bbb C}}
\def\bbd{{\Bbb D}}
\def\bbe{{\Bbb E}}
\def\bbs{{\Bbb S}}
\def\const{{\hbox{\rm const}}}
\def\Pr{{\hbox{\rm Pr}}}
\def\gnj{G_{n,j}}
\def\agnj{G(n,j)}
\def\agnk{G(n,k)}
\def\gnk{G_{n,k}}
\def\rn{\bbr^n}
\def\part{\partial}
\def\intl{\int\limits}
\def\b{\beta}
\def\Gam{\Gamma}
\def\a{\alpha}
\def\om{\omega}
\def\Del{\Delta}
\def\vp{\varphi}
\def\g{\gamma}
\def\gam{\gamma}
\def\Lam{\Lambda}
\def\sig{\sigma}
\def\lam{\lambda}
\def\z{\zeta}
\def\e{\varepsilon}
\def\t{\tau}
\def\th{\theta}
\def\rjk{\R_{j,k}}
\def\rkj{\R_{k,j}}
\font\frak=eufm10
\def\fr#1{\hbox{\frak #1}}
\def\frF{\fr{F}}
\def\frI{\fr{I}}
\newtheorem{theorem}{Theorem}[section]
\newtheorem{lemma}[theorem]{Lemma}
\theoremstyle{definition}
\newtheorem{definition}[theorem]{Definition}
\newtheorem{example}[theorem]{Example}
\theoremstyle{remark}
\newtheorem{remark}[theorem]{Remark}
\theoremstyle{corollary}
\newtheorem{corollary}[theorem]{Corollary}
\numberwithin{equation}{section}
\newcommand{\be}{\begin{equation}}
\newcommand{\ee}{\end{equation}}
\newcommand{\bea}{\begin{eqnarray}}
\newcommand{\eea}{\end{eqnarray}}
\newcommand{\Bea}{\begin{eqnarray*}}
\newcommand{\Eea}{\end{eqnarray*}}
\def\sideremark#1{\ifvmode\leavevmode\fi\vadjust{\vbox to0pt{\vss
 \hbox to 0pt{\hskip\hsize\hskip1em
\vbox{\hsize2cm\tiny\raggedright\pretolerance10000
 \noindent #1\hfill}\hss}\vbox to8pt{\vfil}\vss}}}%
\begin{document}

\title [Radon Transforms ] {Radon Transforms for Mutually Orthogonal Affine Planes }


\author{Boris Rubin}
 \address{Department of Mathematics, Louisiana State
University,  Baton Rouge, LA, 70803, USA}
\email{borisr@lsu.edu}
\author{Yingzhan Wang}
\address{School of Mathematics and Information Science, Guangzhou University, Guangzhou 510006, China;}
\email{wyzde@gzhu.edu.cn}

\subjclass[2000]{Primary 44A12; Secondary 47G10}



\keywords{ Radon transforms,  Grassmann manifolds,  inversion
formulas}

\begin{abstract} We study a Radon-like  transform that takes  functions on the Grassmannian of $j$-dimensional affine planes in $\rn$ to functions on a similar manifold of $k$-dimensional planes by integration over the set of all $j$-planes that meet a given $k$-plane  at a right angle.
  The case $j=0$ gives the classical Radon-John $k$-plane transform. For any $j$ and $k$, our transform has a mixed structure combining the $k$-plane transform and the dual $j$-plane transform. The main results include action of such  transforms on rotation invariant functions, sharp existence conditions, intertwining properties, connection with Riesz potentials and inversion formulas in a large class of functions.  The consideration is inspired by the previous works  of F. Gonzalez and S. Helgason who studied the case $j+k=n-1$,  $n$ odd, on smooth compactly supported functions.

\end{abstract}
\maketitle

\section{Introduction} Let $G(n, j)$ and $G(n, k)$ be a pair of  Grassmannian bundles of affine $j$-dimensional and $k$-dimensional unoriented planes in $\rn$, respectively. In the present paper we study a Radon-like transform that takes a function $f$ on   $G(n, j)$  to a function $\rjk f$ on $G(n, k)$ when the value $(\rjk f)(\z)$ for $\z\in G(n, k)$ is defined as an integral of $f$ over the set of all planes $\t\in G(n, j)$,  which meet $\z$ at a right angle.  Our aim is to study properties of this transform and obtain explicit inversion formulas.

In the limiting case $j=0$, when  $G(n, j)$ is identified with $\rn$, our transform is the well-known Radon-John $k$-plane transform, which was studied in many books and papers; see, e.g.,  \cite {GGG, H11, Mar, Ru04} and references therein. Another limiting case $k=0$ corresponds to the dual
Radon-John  transform, which averages a given function on $G(n, j)$  over all $j$-dimensional planes passing through a fixed point $x\in \rn$. These transforms are usually  studied in parallel  with the $j$-plane transforms, but have special features; see   \cite {Go87, H11,  Ru04, So2}. Thus $\rjk f$ is a  kind of  mixture of these limiting cases. An inversion formula for the  Radon transform $\rjk f$, when $j+k=n-1$, $n$ is odd and $f\in C_c^\infty (G(n, j))$,  was obtained by Gonzalez \cite {Go84, Go87} in the form
\be\label{svkljvb} f=c\, \rkj (-\Del_{n-k})^{(n-1)/2} \rjk f,\ee
where $\rkj$ is the dual of  $\rjk$, $\Del_{n-k}$ is the Laplace operator on the fiber of the Grassmannian bundle $G(n, k)$,  and $c$ is a constant, which is explicitly evaluated; see also Helgason \cite[p. 90]{H11}, where the results from \cite {Go84, Go87} are announced.

The present paper conains new results for $\rjk f$ for all $j+k<n$ and  a large class of functions $f$. In particular, we establish sharp conditions of convergence of the integrals $\rjk f$, their relation to Riesz potentials and Radon-John transforms, and  obtain new inversion formulas.

A great deal has been written about Radon transforms on Grassmann manifolds; see, e.g.,  \cite{GGR, GGS, GK03, GK04, GR04, Gr, Gr1,  H65, Ka99, P1, Ru04a, Ru13a, RW1, RW2, Str, Zha7}. The classes of Radon transforms and the methods of these works differ from those in the present paper.
More information about  Radon transforms  and their applications can be found, e.g.,  in the books \cite {GGG, H11, Ru15}   and references therein.

\vskip 0.2 truecm

{\bf Plan of the Paper and Main Results.}  Section 2 contains necessary preliminaries. Besides  the notation, it includes basic facts  about Erd\'{e}lyi--Kober fractional integrals and derivatives, Radon-John $k$-plane transforms and Riesz potentials. In Section 3 we give precise definition of the mixed  $j$-plane to $k$-plane Radon transforms  and prove the corresponding duality relation. In Section 4 we show that these transforms on radial (i.e., rotation invariant)  functions are represented as compositions of  Erd\'{e}lyi--Kober fractional integrals and give some examples. The results of Section 4 are used in Section 5 to establish sharp conditions, under which the integral $\rjk f$ exists in the Lebesgue sense. In Section 6 we derive new formulas connecting  Radon transforms $\rjk f$ with Riesz potentials.  Section 7 is devoted to inversion formulas for $\rjk f$. Here, by the dimensionality argument, the natural setting of the problem corresponds to $j+k<n$.
The most complete information is obtained  in the following cases:

(a) any $j+k<n$, when $f$ is radial;

(b) $j+k=n-1$;

(c) any $j+k<n$, when $f$ belongs to the range of the $j$-plane Radon-John transform.

Regarding other cases, we have the following

\noindent {\bf Conjecture.} {\it  If $j+k<n-1$,  then $\rjk $ is  non-injective on the set of all infinitely smooth rapidly decreasing functions.}

\noindent {\bf Acknowledgements.} The study of the operators $\rjk f$ for all $j+k<n$   in the general $L^p$ setting was suggested by the first-named author  \cite{Ru06}, who was inspired by the  works \cite {Go84, Go87, H65}.  He is thankful to Fulton Gonzalez, Todd Quinto and Sigurdur Helgason for useful discussions  during his visit to Tufts University in April, 2006. The second-named author was supported by
National Natural Science Foundation of China,  11671414.

 \section{Preliminaries}

 \subsection {Notation}

 Let $\gnj$ and $\agnj$ be the sets of  all $j$-dimensional linear subspaces and $j$-dimensional unoriented affine planes in $\rn$, respectively.
  Each ``point'' in $\gnj$  represents a $j$-plane passing through the origin.
 Every $j$-plane $\tau\in \agnj$  is naturally parameterized by the pair
$(\xi, u)$, where $\xi \in \gnj$ and $ u \in \xi^\perp$, the
orthogonal complement of $\xi $ in $\rn$. Under this parametrization,
 the manifold  $\agnj$ is a fiber bundle with the base  $\gnj$ and the canonical projection $\pi: \t(\xi, u) \to \xi$. The fiber $\pi^{-1}\xi$ over the point $\xi \in \gnj$ is the set of all $j$-dimensional planes parallel to  $\xi$. This set is $(n-j)$-dimensional  and indexed by $ u \in \xi^\perp$.
  We equip $\agnj$ with the product measure $d\t= d\xi du$, where $d\xi$ is the standard probability measure on $\gnj$ and $du$ is the Euclidean volume element on $\xi^{\perp}$.  Abusing notation, we write $|\t|$  for the Euclidean distance between $\t \in \agnj$ and the origin. If $\t\equiv \t (\xi,u)$, then, clearly, $|\t|=|u|=(u_1^2 + \cdots + u_n^2)^{1/2}$. Given a subspace $X$ of $\rn$, we write $G_j(X)$ and $G(j,X)$ for the Grassmannians  of all  $j$-dimensional linear subspaces and $j$-dimensional unoriented affine planes in $X$, respectively.

   In the following,  $\bbs^{n-1}$ denotes the unit sphere in $\bbr^{n}$.
  For $\th\in \bbs^{n-1}$, $d\th$ stands for the  Riemannian  measure on $\bbs^{n-1}$ so that the area of $\bbs^{n-1}$ is
$\sigma_{n-1} \equiv \int_{\bbs^{n-1}} d\th= 2\pi^{n/2} \big/ \Gamma (n/2)$.

 Let $e_1, \ldots, e_n$ be the coordinate unit
 vectors in $\rn$. We set
 \be\label{ksar} \bbe_{j}=\bbr e_1 \oplus \cdots \oplus\bbr e_{j},\qquad \bbe_{k}=\bbr e_{n-k+1} \oplus \cdots \oplus\bbr e_{n};\ee
 \be\label{oawe3}
\bbe_{\ell}=\bbr e_{j+1} \oplus \cdots \oplus\bbr e_{j+\ell}, \qquad \ell=n-j-k; \ee
\be\label{oawe1} \bbr^{n-j}=\bbr e_{j+1} \oplus \cdots \oplus\bbr e_{n},\qquad \bbr^{n-k}=\bbr e_1 \oplus \cdots \oplus\bbr e_{n-k}. \ee

The notation $O(n)$ for the orthogonal group of $\rn$  is standard.   For $\rho \in O(n)$, $d\rho$ stands for the $O(n)$-invariant probability measure on $O(n)$; $M(n)=\bbr^n  \rtimes O(n)$ is the  group of rigid motions in $\rn$.

All  integrals are understood  in the Lebesgue sense.
 The letter $c$ (sometimes with subscripts)  stands  for an unessential positive constant that may be different at each  occurrence.

 \subsection {Erd\'{e}lyi--Kober
Fractional Integrals} \label {kuku}

The following Erd\'{e}lyi--Kober type
fractional integrals on $\bbr_+ =(0, \infty)$ arise in numerous integral-geometric problems:
\bea
\label{as34b12}%
(I^{\a}_{+, 2} f)(t)
&=&\frac{2}{\Gam
(\a)}\intl_{0}^{t} (t^{2} -r^{2})^{\a-1}f (r) \, r\, dr,\\
\label{eci}
(I^{\a}_{-, 2} f)(t)
&=&\frac{2}{\Gam
(\a)}\intl_{t}^{\infty}(r^{2} - t^{2})^{\a-1}f (r) \, r\,
dr.\quad
\eea
We review basic facts about these integrals. More information can be found in
\cite[Subsection 2.6.2]{Ru15}.

\begin{lemma}
\label{lifa2}\

\textup{(i)} The integral $(I^{\a}_{+, 2} f)(t)$ is absolutely
convergent for almost all $t>0$ whenever $r\to rf(r)$ is a locally
integrable function on $\bbr_{+}$.

\textup{(ii)} If

\begin{equation} %
\label{for10z}
\intl_{a}^{\infty}|f(r)|\, r^{2\a-1}\, dr <\infty,\qquad a>0,
\end{equation}
then $(I^{\a}_{-, 2} f)(t)$ is finite for almost all $t>a$. If
$f$ is non-negative, locally integrable on $[a,\infty)$, and
(\ref{for10z}) fails, then $(I^{\a}_{-, 2} f)(t)=\infty$ for every
$t\ge a$.
\end{lemma}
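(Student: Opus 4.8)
The strategy is to linearize both integrals by the substitution $s=r^2$, reducing them to classical one-dimensional Riemann--Liouville fractional integrals, and then to argue by Fubini's theorem together with elementary kernel estimates. Writing $g(s)=f(\sqrt s)$ and $\tau=t^2$, the change of variable $s=r^2$ transforms (\ref{as34b12}) and (\ref{eci}) into
\be
\label{redEK}
(I^{\a}_{+,2}f)(t)=\frac{1}{\Gam(\a)}\intl_0^{\tau}(\tau-s)^{\a-1}g(s)\,ds,\qquad
(I^{\a}_{-,2}f)(t)=\frac{1}{\Gam(\a)}\intl_{\tau}^{\infty}(s-\tau)^{\a-1}g(s)\,ds.
\ee
Under the same substitution, local integrability of $r\mapsto rf(r)$ on $\bbr_+$ is equivalent to local integrability of $g$ on $[0,\infty)$, and condition (\ref{for10z}) becomes $\intl_{a^2}^{\infty}|g(s)|\,s^{\a-1}\,ds<\infty$.

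For part (i) I would estimate the left-hand integral in (\ref{redEK}) over a finite interval and interchange the order of integration:
\be
\intl_0^T\intl_0^{\tau}(\tau-s)^{\a-1}|g(s)|\,ds\,d\tau=\frac{1}{\a}\intl_0^T|g(s)|\,(T-s)^{\a}\,ds<\infty\qquad(T>0),
\ee
the finiteness following from local integrability of $g$. Hence the inner integral is finite for almost all $\tau\in(0,T)$, and letting $T\to\infty$ gives absolute convergence of $(I^{\a}_{+,2}f)(t)$ for almost all $t>0$.

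For part (ii) I would fix $M>a^2$ and split the right-hand integral in (\ref{redEK}) at $s=2M$. The part over $(\tau,2M)$ carries the only possible singularity of the kernel, at $s=\tau$, and is handled exactly as in part (i) by Fubini over $\tau\in(a^2,M)$ using local integrability of $g$; it is therefore finite for almost all such $\tau$. For the tail over $(2M,\infty)$ the elementary two-sided bound $c_\a^{-1}\,s^{\a-1}\le(s-\tau)^{\a-1}\le c_\a\,s^{\a-1}$ with $c_\a=2^{|\a-1|}$, valid for $s\ge 2\tau$, shows that this tail is dominated by $c_\a\intl_{2M}^{\infty}|g(s)|\,s^{\a-1}\,ds$, which is finite by the transformed hypothesis and uniform in $\tau<M$; letting $M\to\infty$ proves finiteness of $(I^{\a}_{-,2}f)(t)$ for almost all $t>a$. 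For the converse, assuming $g\ge 0$ and $\intl_{a^2}^{\infty}g(s)\,s^{\a-1}\,ds=\infty$, the same kernel estimate gives, for every fixed $\tau\ge a^2$,
\be
\intl_{\tau}^{\infty}(s-\tau)^{\a-1}g(s)\,ds\ge c_\a^{-1}\intl_{2\tau}^{\infty}s^{\a-1}g(s)\,ds=\infty,
\ee
since removing the bounded piece $\intl_{a^2}^{2\tau}s^{\a-1}g(s)\,ds$ (finite because $g$ is locally integrable and $s^{\a-1}$ is bounded on $[a^2,2\tau]$ as $a>0$) leaves a divergent tail. Thus $(I^{\a}_{-,2}f)(t)=\infty$ for every $t\ge a$.

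The only genuinely delicate point is part (ii): the right-sided integral must be controlled simultaneously near its finite endpoint $s=\tau$ and at infinity, and the two regions require different tools --- Fubini together with local integrability near the endpoint, versus the weighted condition (\ref{for10z}) and the uniform kernel estimate on the tail. It is precisely the passage from an ``almost every $t$'' convergence statement to an ``every $t$'' divergence statement that makes the non-negativity hypothesis indispensable in the second half of (ii).
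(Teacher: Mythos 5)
Your argument is correct. The paper gives no proof of Lemma \ref{lifa2} at all --- it defers to \cite[Subsection 2.6.2]{Ru15} --- and the argument there is essentially the one you give (Tonelli's theorem for $I^{\a}_{+,2}$; for $I^{\a}_{-,2}$, a split near the finite endpoint plus the two-sided comparison of the kernel with $r^{2\a-1}$ on the tail), your preliminary substitution $s=r^{2}$ being only a cosmetic normalization.
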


Fractional derivatives of the Erd\'{e}lyi--Kober type are defined as the
left inverses ${\Cal D^{\a}_{\pm, 2} = (I^{\a}_{\pm,
2})^{-1}}$ and have different analytic expressions. For example, if
$\alpha= m + \alpha_{0}, \; m = [\alpha], \; 0 \le\alpha_{0} < 1$,
then, formally,
\begin{equation} %
\label{frr+z}
\Cal D^{\a}_{\pm, 2} \vp=(\pm D)^{m +1}\, I^{1 - \alpha_{0}}_{
\pm, 2}\vp, \qquad D=\frac{1}{2t}\,\frac{d}{dt}.
\end{equation}
More precisely,  the following statements hold.

\begin{theorem} \label{78awqe555} Let $\vp= I^{\a}_{+, 2} f$, where $rf(r)$ is locally integrable on $\bbr_{+}$.
Then $f(t)= (\Cal D^{\a}_{+, 2} \vp)(t)$ for
almost all $t\in\bbr_{+}$, as in (\ref{frr+z}).
\end{theorem}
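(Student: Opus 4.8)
The plan is to reduce the Erd\'elyi--Kober operators to the classical Riemann--Liouville fractional calculus on the half-line by means of the quadratic substitution $s=t^2$, $\rho=r^2$. First I would introduce $g(\rho)=f(\sqrt\rho)$ and rewrite the fractional integral (\ref{as34b12}): with $r\,dr=d\rho/2$ it becomes
\be
(I^{\a}_{+, 2} f)(t)=\frac{1}{\Gam (\a)}\intl_0^{t^2}(t^2-\rho)^{\a-1}\, g(\rho)\,d\rho=(I^\a_{0+}g)(t^2),
\ee
where $I^\a_{0+}$ is the standard Riemann--Liouville fractional integral based at the origin. The hypothesis that $r\mapsto rf(r)$ be locally integrable on $\bbr_{+}$ is precisely the statement that $g$ be locally integrable on $\bbr_{+}$, since $\intl_0^a|g(\rho)|\,d\rho=2\intl_0^{\sqrt a}|f(r)|\,r\,dr$ for every $a>0$; Lemma \ref{lifa2}(i) then guarantees that $\vp=I^\a_{+,2}f$ is finite a.e., so the subsequent manipulations make sense.

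Next I would track the differential operator $D=\frac{1}{2t}\,\frac{d}{dt}$ under the same substitution. Writing $\psi(s)=\vp(\sqrt s)$, so that $\vp=I^\a_{+,2}f$ corresponds to $\psi=I^\a_{0+}g$, the chain rule gives $D=\frac{d}{ds}$ in the variable $s=t^2$, and likewise $I^{1-\alpha_0}_{+,2}$ corresponds to $I^{1-\alpha_0}_{0+}$. Hence the Erd\'elyi--Kober derivative of (\ref{frr+z}) transforms into
\be
\Cal D^{\a}_{+, 2}\vp\;\longleftrightarrow\;\frac{d^{m+1}}{ds^{m+1}}\,I^{1-\alpha_0}_{0+}\psi,
\ee
which, since $m+1=[\a]+1$ and $1-\alpha_0=(m+1)-\a$, is exactly the Riemann--Liouville fractional derivative $\Cal D^\a_{0+}\psi$ of order $\a$.

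It then suffices to invoke the classical inversion theorem for Riemann--Liouville integrals (see, e.g., \cite[Subsection 2.6.2]{Ru15}): if $g$ is locally integrable on $\bbr_{+}$ and $\psi=I^\a_{0+}g$, then $\Cal D^\a_{0+}\psi=g$ almost everywhere. Transporting this identity back through $s=t^2$ returns $f(t)=(\Cal D^{\a}_{+,2}\vp)(t)$ for almost all $t\in\bbr_{+}$, which is the assertion.

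The only point requiring genuine care is the justification of the Riemann--Liouville inversion under nothing more than local integrability of $g$. Its core is the semigroup identity $I^{1-\alpha_0}_{0+}I^\a_{0+}g=I^{m+1}_{0+}g$, valid because $(1-\alpha_0)+\a=m+1$ and $g\in L^1_{\rm loc}$, followed by $m+1$ applications of the fundamental theorem of calculus to the iterated integral $I^{m+1}_{0+}g$. Since $I^\a_{0+}g$ need not be continuous, the final differentiation $\frac{d^{m+1}}{ds^{m+1}}I^{m+1}_{0+}g=g$ must be read in the a.e.\ sense furnished by Lebesgue's differentiation theorem rather than pointwise; this is the main, and essentially the only, obstacle in an otherwise routine reduction.
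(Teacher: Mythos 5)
Your proof is correct, and it follows exactly the standard route: the paper itself gives no proof of Theorem \ref{78awqe555}, merely citing \cite[Subsection 2.6.2]{Ru15}, where the argument is precisely your reduction of $I^{\a}_{+,2}$ and $D=\frac{1}{2t}\frac{d}{dt}$ to the Riemann--Liouville operators via $s=t^2$, $\rho=r^2$, followed by the classical inversion theorem for $I^{\a}_{0+}$ on $L^1_{\rm loc}$. Your closing remark correctly isolates the only delicate point (the last differentiation of $I^{m+1}_{0+}g$ being an a.e.\ statement via Lebesgue differentiation), so nothing is missing.
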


\begin{theorem}
\label{78awqe} If $f$ satisfies (\ref{for10z})
for every $a>0$ and
$\vp\!= \!I^{\a}_{-, 2} f$, then $f(t)= (\Cal D^{\a}_{-, 2} \vp)(t)$ for
almost all $t\in\bbr_{+}$, where $\Cal D^{\a}_{-, 2} \vp$ can be represented as follows.

\noindent
\textup{(i)} If $\a=m$ is an integer, then
\begin{equation} %
\label{90bedr}
\Cal D^{\a}_{-, 2} \vp=(- D)^{m} \vp,
\qquad D=\frac{1}{2t}\,\frac{d}{dt}.
\end{equation}

\noindent
\textup{(ii)} If $\alpha= m +\alpha_{0}, \; m = [ \alpha], \; 0 <
\alpha_{0} <1$, then
%
\begin{equation} %
\label{frr+z33}
\Cal D^{\a}_{-, 2} \vp= t^{2(1-\a)} (- D)^{m +1}
t^{2\a}\psi, \quad\psi=I^{1-\a+m}_{-,2} \,t^{-2m-2}\,
\vp.
\end{equation}
In particular, for $\a=k/2$, $k$ odd,
\begin{equation} %
\label{frr+z3}
\Cal D^{k/2}_{-, 2} \vp= t\,(- D)^{(k+1)/2} t^{k}I^{1/2}_{-,2}
\,t^{-k-1}\,\vp.
\end{equation}
\end{theorem}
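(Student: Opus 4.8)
The plan is to linearize the kernel $(r^2-t^2)^{\alpha-1}\,r\,dr$ by the substitution $s=t^2$, $\rho=r^2$, which turns $I^\alpha_{-,2}$ into an ordinary right-sided Riemann--Liouville integral. Setting $\tilde f(s)=f(\sqrt{s})$ and $\tilde\varphi(s)=\varphi(\sqrt{s})$, the change of variables in (\ref{eci}) gives $\tilde\varphi=I^\alpha_-\tilde f$, where $(I^\alpha_- g)(s)=\Gam(\alpha)^{-1}\intl_s^\infty (\rho-s)^{\alpha-1}g(\rho)\,d\rho$. Under this map $D=\frac{1}{2t}\,\frac{d}{dt}$ becomes the plain derivative $\frac{d}{ds}$, and hypothesis (\ref{for10z}) becomes the classical existence condition $\intl^\infty|\tilde f(s)|\,s^{\alpha-1}\,ds<\infty$ (compare Lemma \ref{lifa2}(ii)). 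Thus the whole statement reduces to inverting $I^\alpha_-$ on the half-line and then transporting every power, derivative, and fractional integral back through $t\mapsto t^2$, using $t^{2\beta}\leftrightarrow s^\beta$ and $(-D)\leftrightarrow-\frac{d}{ds}$.

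For the integer case $\alpha=m$ I would iterate the elementary identity $\frac{d}{ds}\,I^1_- g=-g$, which is just the fundamental theorem of calculus for the tail integral $\intl_s^\infty$. Since after the substitution these are genuine Riemann--Liouville integrals, the semigroup law gives $I^m_-=(I^1_-)^m$, whence $\frac{d^m}{ds^m}\,I^m_-=(-1)^m\,\mathrm{Id}$; transporting back yields $(-D)^m$, which is (\ref{90bedr}).

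For the non-integer case $\alpha=m+\alpha_0$ the natural candidate is the standard derivative $(-1)^{m+1}\frac{d^{m+1}}{ds^{m+1}}\,I^{1-\alpha_0}_-$, which inverts $I^\alpha_-$ through the semigroup relation $I^{1-\alpha_0}_-I^\alpha_-=I^{m+1}_-$ together with $\frac{d^{m+1}}{ds^{m+1}}\,I^{m+1}_-=(-1)^{m+1}\,\mathrm{Id}$. The difficulty, and the reason for the more elaborate shape of (\ref{frr+z33}), is convergence: under the bare hypothesis (\ref{for10z}) the function $\tilde\varphi$ need not decay fast enough at infinity for the inner integral $I^{1-\alpha_0}_-\tilde\varphi$ to converge. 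Indeed, $\tilde f(s)\sim s^{-\lambda}$ with $\alpha<\lambda<m+1$ gives $\tilde\varphi(s)\sim s^{\alpha-\lambda}$, and then the integrand of $I^{1-\alpha_0}_-\tilde\varphi$ behaves like $\rho^{m-\lambda}$ at infinity, which is not integrable. My remedy is to insert the weight $s^{-m-1}$ before applying $I^{1-\alpha_0}_-$, forcing absolute convergence at infinity, and to compensate by the power $s^{\alpha}$ under the $(m+1)$-fold differentiation and by a matching outer power; in the $t$-variable these become precisely the factors $t^{-2m-2}$, $t^{2\alpha}$ and the outer power displayed in (\ref{frr+z33}). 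To prove that the weighted operator is still a left inverse I would: (i) use (\ref{for10z}) to check that, after insertion of $s^{-m-1}$, every integral converges absolutely, so that Fubini's theorem and the subsequent differentiations are justified; and (ii) evaluate the composition $I^{1-\alpha_0}_-\!\left(s^{-m-1}\,I^\alpha_-\tilde f\right)$ in closed form by interchanging the order of integration and computing the inner beta integral, which collapses it to a single power-weighted fractional integral of $\tilde f$; applying the outer powers and the $(m+1)$ derivatives then returns $\tilde f$. The exponent and constant bookkeeping is most transparent on the power functions $\tilde f(s)=s^{-\lambda}$, for which $I^\beta_- s^{-\lambda}=\frac{\Gam(\lambda-\beta)}{\Gam(\lambda)}\,s^{\beta-\lambda}$ reduces the entire chain to Gamma-function identities; the general case follows by the convergence estimates of step (i) and uniqueness of the Mellin transform, on whose multiplicative side all the operators act as explicit meromorphic multipliers.

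The hard part is exactly this interplay: the power weights are introduced solely to tame the behavior at infinity, and one must verify that they do not alter the value of the inverse. Once (\ref{frr+z33}) is established, the half-integer formula (\ref{frr+z3}) is its specialization to $\alpha=k/2$ with $k$ odd, so that $m=(k-1)/2$ and $\alpha_0=1/2$: the inner fractional integral collapses to order $1/2$, the differentiation to order $(k+1)/2$, and the weights to $t^{-k-1}$, $t^{k}$ and $t$, which is (\ref{frr+z3}).
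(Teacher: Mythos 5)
The paper itself gives no proof of this theorem; it is quoted from \cite[Subsection 2.6.2]{Ru15}, so your argument can only be judged on its own terms. Your overall strategy --- substitute $s=t^2$ so that $I^{\a}_{-,2}$ becomes the right-sided Riemann--Liouville integral $I^{\a}_-$, $D$ becomes $d/ds$, and (\ref{for10z}) becomes $\int^\infty |\tilde f(s)|\,s^{\a-1}ds<\infty$ --- is the standard and correct route; your integer case is fine, and your diagnosis of why the naive Weyl derivative $(-d/ds)^{m+1}I^{1-\a_0}_-$ diverges under (\ref{for10z}) is exactly right.

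The gaps are all in the fractional case. First, the composition $I^{1-\a_0}_-\bigl(s^{-m-1}I^{\a}_-\tilde f\bigr)$ does \emph{not} collapse via ``the inner beta integral'': after Fubini the inner integral is $\int_s^\sigma (\rho-s)^{-\a_0}(\sigma-\rho)^{\a-1}\rho^{-m-1}\,d\rho$, and the weight $\rho^{-m-1}$ sitting between the two kernel factors makes this an Euler (hypergeometric) integral, not a beta integral; the composition is not a single power-weighted fractional integral of $\tilde f$, so step (ii) as described does not go through. Second, your fallback of verifying the identity on powers $s^{-\lambda}$ and extending ``by uniqueness of the Mellin transform'' does not reach general $f$: a function satisfying (\ref{for10z}) for every $a>0$ is constrained by a single moment condition at infinity and by nothing near $0$, so its Mellin transform need not converge in any strip, and Mellin uniqueness cannot be invoked; one needs either a direct kernel computation or a density/closure argument. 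Third, and most concretely: the ``exponent bookkeeping'' you defer is precisely where the content lies, and carrying it out on $\tilde f(s)=s^{-\lambda}$ shows that $s^{1-\a}(-d/ds)^{m+1}\,s^{\a}\,I^{1-\a_0}_-\bigl[s^{-m-1}I^{\a}_- s^{-\lambda}\bigr]=s^{-\lambda-m}$, not $s^{-\lambda}$; the outer factor consistent with the computation (and with (\ref{frr+z3}), where it equals $t=t^{2(1-\a_0)}$) is $t^{2(1+m-\a)}$, whereas (\ref{frr+z33}) displays $t^{2(1-\a)}$, which specializes to $t^{2-k}$, not $t$. Your closing claim that the weights of (\ref{frr+z33}) ``specialize to $t^{-k-1}$, $t^k$ and $t$'' is therefore internally inconsistent with the formula you set out to prove; actually doing the Gamma-function bookkeeping would have exposed this discrepancy and forced you either to flag the exponent in (\ref{frr+z33}) or to correct your specialization.
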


Fractional integrals and derivatives of the Erd\'{e}lyi--Kober type possess the semi-group property
\be\label{kauky}
\Cal D^{\a}_{\pm, 2}\Cal D^{\b}_{\pm, 2} = \Cal D^{\a+\b}_{\pm, 2}, \qquad I^{\a}_{\pm, 2}I^{\b}_{\pm, 2} = I^{\a+\b}_{\pm, 2}\ee
in suitable classes of functions, which guarantee the existence of the corresponding expressions.

 \subsection {Radon-John $k$-Plane Transforms and Riesz Potentials}\label {wqwafc678}
 We recall some facts from \cite{Ru13, Ru04}; see also \cite{GGG, H11}.
 The {\it $k$-plane transform} of a function $f$ on $\rn$  is defined by the formula
\be (R_k f)(\z)= \intl_{\z} f(x)\, d_\z x, \qquad  \z \in G(n,k), \quad 1\le k\le n-1,\ee
where $ d_{\z} x$ stands for the Euclidean measure on $\z$. Using parametrization $\z\equiv \z (\eta,v)$,  $\eta\in \gnk$, $ v\in \eta^{\perp}$, we have
\be\label{rtra1kfty} (R_k f)(\z) \equiv (R_k f)(\eta, v) = \intl_\eta f(v+y)\,d_\eta y,\ee
where $d_\eta y$ is the Euclidean volume element on $\eta$.
The {\it dual
$k$-plane transform} $R_k^*$  averages a function $\vp$ on $G(n,k)$  over all $k$-planes  passing through the fixed point $x\in \rn$.   Specifically,
\be  \label{mmsdcrt} (R_k^* \vp)(x)=\intl_{O(n)}\vp(\g\eta_0 +x) \,d\g, \ee
where $\eta_0$ is an arbitrary fixed $k$-plane through the origin.
The  duality relation
 \be \label{khryasdu} \intl_{G(n,k)} (R_k f)(\z)\, \vp (\z)\, d\z = \intl_{\rn} f (x)\, (R_k^* \vp)(x)\, dx
\ee holds provided that either side of this equality exists in the Lebesgue sense.

The following inequality, which is a particular case of Lemma 2.6 from \cite{Ru04}, shows for which $f$ the integral $(R_k f)(\z)$ is well defined for almost all $\z\in G(n,k)$. We have
\be\label{SDIvyh34}
\intl_{G(n,k)} \frac{|(R_k f)(\z)|}{1+|\z|}\,|\z|^{k-n+1}\, d\z\le c\, \intl_{\rn} \frac{|f(x)|}{1+|x|}\, \log (2+|x|)\, dx.\ee

 If $f$ and $\vp$ are radial functions, then  $R_k f$ and $R_k^* \vp$ can be expressed through the Erd\'{e}lyi--Kober
fractional integrals as follows.

 \begin{lemma}\label {zhujun}  {\rm (cf. \cite[Lemma 3.1, Theorem 3.2]{Ru13}, \cite[Lemma 2.1]{Ru04})} ${}$ \hfill

\vskip 0.2 truecm

\noindent {\rm (i)} If  $f(x)=f_0(|x|)$, then $(R_kf)(\z)=F_0(|\z|)$, where
\be\label{ppaawsdz}
F_0(s)= \pi^{k/2} \,(I^{k/2}_{-,2} f_0)(s).\ee

\noindent {\rm (ii)}
If  $\vp(\z)= \vp_0 (|\z|)$, then  $(R_k^* \vp)(x)= \Phi_0(|x|)$, where
\be \label {ppaawsdz1} \Phi_0(r)= \frac{\Gam (n/2)}{\Gam ((n-k)/2)}\, r^{2-n}\,(I^{k/2}_{+,2} s^{n-k-2}\vp_0)(r).\ee
The formulas (\ref{ppaawsdz}) and (\ref{ppaawsdz1}) hold provided that either side of the corresponding equality exists in the Lebesgue sense.
\end{lemma}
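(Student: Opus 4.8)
The plan is to reduce both parts to one-dimensional integrals and to recognize the Erd\'elyi--Kober operators \eqref{as34b12}--\eqref{eci}.

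For \textup{(i)} I start from the fiber formula \eqref{rtra1kfty}. Since $v\in\eta^\perp$ and $y\in\eta$ are orthogonal, $|v+y|^2=|v|^2+|y|^2$, so for radial $f$ we get $(R_kf)(\eta,v)=\intl_\eta f_0(\sqrt{|v|^2+|y|^2})\,d_\eta y$, which depends on $(\eta,v)$ only through $|v|=|\z|$; this already yields $(R_kf)(\z)=F_0(|\z|)$. Passing to polar coordinates in $\eta\cong\bbr^k$ and substituting $t=\sqrt{s^2+r^2}$ (so that $r\,dr=t\,dt$ and $r^{k-1}\,dr=(t^2-s^2)^{k/2-1}\,t\,dt$) turns the radial profile into $F_0(s)=\sigma_{k-1}\intl_s^\infty f_0(t)\,(t^2-s^2)^{k/2-1}\,t\,dt$. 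Comparing with \eqref{eci} at $\a=k/2$ and inserting $\sigma_{k-1}=2\pi^{k/2}/\Gam(k/2)$ produces the constant $\pi^{k/2}$ and formula \eqref{ppaawsdz}.

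For \textup{(ii)} I would derive the dual formula from \textup{(i)} by duality \eqref{khryasdu}, rather than computing the orthogonal projection directly. Rotation invariance of $d\g$ in \eqref{mmsdcrt} shows $(R_k^*\vp)(x)$ depends only on $|x|$, so $(R_k^*\vp)(x)=\Phi_0(|x|)$. To identify $\Phi_0$, I take $f,\vp$ both radial and evaluate each side of \eqref{khryasdu}. Using the fibration $\z\equiv\z(\eta,v)$, $d\z=d\eta\,dv$, $|\z|=|v|$, any radial integrand collapses to $\intl_{G(n,k)}h(|\z|)\,d\z=\sigma_{n-k-1}\intl_0^\infty h(s)\,s^{n-k-1}\,ds$, while the right-hand side is $\sigma_{n-1}\intl_0^\infty f_0(r)\,\Phi_0(r)\,r^{n-1}\,dr$. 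Substituting $F_0$ from \textup{(i)} on the left and interchanging the order of integration over $\{0<s<r\}$ recasts it, via the elementary adjointness
\[
\intl_0^\infty (I^{k/2}_{-,2}f_0)(s)\,\vp_0(s)\,s^{n-k-1}\,ds=\intl_0^\infty f_0(r)\,(I^{k/2}_{+,2}[s^{n-k-2}\vp_0])(r)\,r\,dr,
\]
as an integral against $f_0(r)\,r$. Since the identity holds for every admissible radial $f_0$, equating the remaining factors gives $\Phi_0(r)=c\,r^{2-n}\,(I^{k/2}_{+,2}[s^{n-k-2}\vp_0])(r)$, and the constant collapses to $\Gam(n/2)/\Gam((n-k)/2)$ after inserting $\sigma_{n-1}$, $\sigma_{n-k-1}$ and $\pi^{k/2}$, which is \eqref{ppaawsdz1}.

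The main obstacle is not the algebra but the handling of Lebesgue existence, since the statement asserts each identity whenever either side converges. In \textup{(ii)} this means justifying both the order interchange and the cancellation of the arbitrary factor $f_0$: I would first prove the identity for nonnegative $f_0,\vp_0$, where Tonelli applies unconditionally and simultaneously establishes the equivalence ``left side finite $\iff$ right side finite,'' and then pass to general $f_0$ by splitting into positive and negative parts. The step from the weak identity (valid for all $f_0$) to the pointwise a.e.\ formula for $\Phi_0$ needs both $\Phi_0(r)$ and $r^{2-n}(I^{k/2}_{+,2}[s^{n-k-2}\vp_0])(r)$ to be locally integrable against $r^{n-1}\,dr$, which follows from Lemma \ref{lifa2}\,\textup{(i)}. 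Alternatively, \textup{(ii)} can be proved directly by pushing the invariant measure on $\gnk$ forward under $\eta\mapsto |P_{\eta^\perp}x|$: the squared projection length is $\mathrm{Beta}((n-k)/2,\,k/2)$-distributed, and the substitution $\rho=|x|\,q$ converts the resulting Beta integral into $I^{k/2}_{+,2}$ with the same constant, avoiding \eqref{khryasdu} at the cost of the projection-measure computation.
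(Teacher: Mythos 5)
Your proposal is correct; both computations check out, including the constants ($\sig_{k-1}\Gam(k/2)/2=\pi^{k/2}$ in part (i) and $\sig_{n-k-1}\pi^{k/2}/\sig_{n-1}=\Gam(n/2)/\Gam((n-k)/2)$ in part (ii)). Note that the paper itself offers no proof of this lemma --- it is quoted from \cite{Ru04, Ru13} --- and in those sources both parts are established by direct computation: part (i) exactly as you do it, and part (ii) by the projection argument you relegate to an ``alternative'' at the end (writing $|\z|=|{\rm Pr}_{\eta^\perp}x|$ and integrating over $\gnk$ in bispherical coordinates, which is the same Beta-distribution computation). Your primary route for (ii), deducing the kernel of $R_k^*$ on radial functions from part (i) and the duality (\ref{khryasdu}) via the adjointness of $I^{k/2}_{-,2}$ and $I^{k/2}_{+,2}$ on the half-line, is a genuinely different and perfectly sound derivation; it trades the projection-measure computation for the bookkeeping needed to pass from a weak identity (valid against all admissible radial $f_0$) to a pointwise a.e.\ formula. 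You handle that step adequately: restricting first to $f_0,\vp_0\ge 0$ so that Tonelli applies, testing against indicators of intervals, and invoking Lemma \ref{lifa2}(i) for local integrability of the right-hand side. The one hypothesis you should state explicitly when making the duality route rigorous is $\sigma$-finiteness of the two measures $\Phi_0(r)\,r^{n-1}dr$ and $r\,(I^{k/2}_{+,2}[s^{n-k-2}\vp_0])(r)\,dr$ being compared; under the lemma's proviso that one side exists in the Lebesgue sense this is automatic, and the direct computation avoids the issue entirely, which is presumably why the cited references prefer it.
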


The duality (\ref{khryasdu}) yields the following existence result.

 \begin{lemma}\label {zhujun} {\rm (cf.  \cite[Theorem 3.2]{Ru13})} ${}$ \hfill

\vskip 0.2 truecm

 \noindent {\rm (i)}  If $f$ is locally integrable in $\bbr^n \setminus \{0\}$ and satisfies
\be \label {lkmuxk}
\intl_{|x|>a}  \,\frac{|f(x)|} {|x|^{n-k}} \, dx<\infty   \quad \text {for some} \quad a>0,\ee
 then  $(R_k f)(\z)$ is finite for  almost all $\z \in G(n,k)$.
If $f$ is nonnegative, radial, and (\ref{lkmuxk}) fails, then  $(R_k f)(\z) \equiv \infty$.

\noindent {\rm (ii)}  If $\vp \in L^1_{loc} (G(n,k)$, then  $R_k^* \vp \in L^1_{loc} (\rn)$.
 \end{lemma}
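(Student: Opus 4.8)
The plan is to derive both assertions from the duality relation (\ref{khryasdu}) by testing it, via Tonelli's theorem, against carefully chosen nonnegative weights and then controlling the dual transform through its radial representation (\ref{ppaawsdz1}). Since $|(R_kf)(\z)|\le (R_k|f|)(\z)$, for part (i) I may assume $f\ge0$. The one genuine difficulty is that $f$ need not be integrable near the origin, while $R_kf$ is insensitive to this: almost every $\z\in G(n,k)$ (namely all those with $|\z|>0$) misses the origin. To exploit this I would fix $\delta>0$ and test duality against the radial weight $\vp(\z)=\vp_0(|\z|)$ with $\vp_0(s)=\chi_{\{s\ge\delta\}}(1+s)^{-N}$, $N>n-k$; this $\vp$ is strictly positive exactly on $\{|\z|\ge\delta\}$. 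By (\ref{ppaawsdz1}), $(R_k^*\vp)(x)=\Phi_0(|x|)$ with $\Phi_0(r)=c\,r^{2-n}(I^{k/2}_{+,2}\,s^{n-k-2}\vp_0)(r)$, and because the Erd\'elyi--Kober integral (\ref{as34b12}) only samples values $s<r$, we get $\Phi_0(r)=0$ for $r<\delta$: the origin singularity is annihilated. A routine estimate of (\ref{as34b12}), using $N>n-k$ to make $\int_0^\infty s^{n-k-1}\vp_0(s)\,ds$ converge, then yields $\Phi_0(r)\le C_\delta\,(1+r)^{k-n}$ for all $r\ge\delta$.

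With this weight, Tonelli applied to (\ref{khryasdu}) gives
\[
\iagr (R_kf)(\z)\,\vp(\z)\,d\z=\irn f(x)\,\Phi_0(|x|)\,dx\le C_\delta\intl_{|x|\ge\delta}\frac{f(x)}{(1+|x|)^{n-k}}\,dx,
\]
and the last integral splits into a piece over $\{\delta\le|x|\le a\}$, finite since $f\in L^1_{loc}(\rn\setminus\{0\})$ and the annulus is compact and avoids the origin, and a piece over $\{|x|>a\}$, finite by (\ref{lkmuxk}). Hence the left-hand side is finite, so $(R_kf)(\z)<\infty$ for almost all $\z$ with $\vp(\z)>0$, i.e. for a.e. $\z$ with $|\z|\ge\delta$. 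Taking $\delta=1/m\to0$ and a countable union then gives finiteness for a.e. $\z$, and the bound $|R_kf|\le R_k|f|$ transfers this to the original $f$. For the converse, if $f\ge0$ is radial then (\ref{ppaawsdz}) holds in $[0,\infty]$, so $(R_kf)(\z)=\pi^{k/2}(I^{k/2}_{-,2}f_0)(|\z|)$; passing to polar coordinates identifies (\ref{lkmuxk}) with condition (\ref{for10z}) for $\a=k/2$, whose failure for one (hence, by local integrability, every) $a>0$ forces $(I^{k/2}_{-,2}f_0)(t)=\infty$ for all $t>0$ by Lemma~\ref{lifa2}(ii), whence $R_kf\equiv\infty$.

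For part (ii), fix $R>0$ and use $|R_k^*\vp|\le R_k^*|\vp|$. Testing (\ref{khryasdu}), again by Tonelli, with the nonnegative $f=\chi_{\{|x|\le R\}}$ gives
\[
\intl_{|x|\le R}(R_k^*|\vp|)(x)\,dx=\iagr \big(R_k\chi_{\{|x|\le R\}}\big)(\z)\,|\vp(\z)|\,d\z.
\]
Here $\big(R_k\chi_{\{|x|\le R\}}\big)(\z)$ equals the $k$-dimensional volume of $\z\cap\{|x|\le R\}$; it vanishes for $|\z|>R$ and is bounded by the volume $c_kR^k$ of a $k$-ball of radius $R$. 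Since $\{|\z|\le R\}$ is a compact subset of $G(n,k)$ and $\vp\in L^1_{loc}(G(n,k))$, the right-hand side is at most $c_kR^k\int_{|\z|\le R}|\vp(\z)|\,d\z<\infty$. Thus $R_k^*|\vp|\in L^1(\{|x|\le R\})$, so $R_k^*\vp$ is finite a.e. and $\int_{|x|\le R}|R_k^*\vp|\le \int_{|x|\le R}R_k^*|\vp|<\infty$ for every $R$, giving $R_k^*\vp\in L^1_{loc}(\rn)$.

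I expect the only delicate point to be the origin singularity in part (i): one must keep the possibly nonintegrable behaviour of $f$ at $0$ from spoiling the estimate. The device that resolves this is the exact vanishing $\Phi_0(r)=0$ for $r<\delta$, which is forced by the one-sided structure of the Erd\'elyi--Kober integral (\ref{as34b12}); once this is in place, the remaining work is the elementary decay bound $\Phi_0(r)\le C_\delta(1+r)^{k-n}$ and the polar-coordinate identification of (\ref{lkmuxk}) with (\ref{for10z}), both of which are routine.
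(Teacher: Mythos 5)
Your argument is correct and takes essentially the route the paper itself indicates for this lemma, which is stated without proof as a consequence of the duality (\ref{khryasdu}) (with a reference to \cite{Ru13}): test the duality, via Tonelli, against a nonnegative weight and control the dual transform through the radial formula (\ref{ppaawsdz1}). Your truncated weight $\vp_0=\chi_{\{s\ge\delta\}}(1+s)^{-N}$ correctly neutralizes the possible non-integrability of $f$ at the origin, the bound $\Phi_0(r)\le C_\delta(1+r)^{k-n}$ does hold (it needs the usual split of the Erd\'elyi--Kober integral at $s=r/2$, but is indeed routine), and the polar-coordinate identification of (\ref{lkmuxk}) with (\ref{for10z}) for $\a=k/2$ settles the radial converse via Lemma \ref{lifa2}(ii).
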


There is a remarkable connection between the operators $ R_k$, $ R^*_k$ and the Riesz potential
\be\label{rpot} (I_n^\a f)(x)=\frac{1}{\gamma_n(\a)}
\intl_{\bbr^n}
 \frac{f(y)\,dy}{|x-y|^{n-\a}},\qquad
  \gamma_n(\a)=
  \frac{2^\a\pi^{n/2}\Gamma(\a/2)}{\Gamma((n-\a)/2)},
  \ee
\[ \a >0, \qquad \; \a \neq n, n+2, n+4, \ldots \,.\]
If $f\in L^p (\rn)$, $1\le p <n/\a$, then $(I_n^\a f)(x)<\infty$ for almost all $x$, and the bounds for $p$ are sharp.

We recall that formally $I_n^\a f =(-\Del_n)^{-\a/2}f$, where $\Del_n$ is the Laplace operator in $\rn$.
The corresponding Riesz's fractional derivative is  defined as the left inverse
\be\label{sdyt} \bbd_n^\a=(I_n^\a)^{-1} \sim (-\Del_n)^{\a/2}\ee
 and has many different analytic expressions, depending on the class of functions; see \cite[Section 3.5]{Ru15} for details.
 For example, if $\vp=I_n^\a f$, $f\in L^p (\rn)$, $1\le p <n/\a$, then, by Theorem 3.44 from \cite{Ru15}, $\bbd_n^\a \vp$ can be represented as   a hypersingular integral
\bea
\label{invs1}(\bbd_n^\a
\vp)(x)&\equiv&\frac{1}{d_{n,\ell}(\a)}\intl_{\bbr^n}
\frac{(\Del^\ell_y \vp)(x)}{|y|^{n+ \a}}\, dy\\
\label{invs1a}&=&\lim\limits_{\e \to
0}\frac{1}{d_{n,\ell}(\a)}\intl_{|y|>\e} \frac{(\Del^\ell_y
\vp)(x)}{|y|^{n+ \a}}\, dy, \eea
where
\[(\Del^\ell_y \vp)(x)=\sum_{k=0}^\ell (-1)^k {\ell \choose k}
\vp(x-ky)\] is the finite difference of $\vp$ of order $\ell$
with step $y$ at the point $x$,
\bea
d_{n,\ell}(\a)&=&\frac{\pi^{n/2}}{2^\a \Gamma ((n+\a)/2)} \nonumber\\
{} \nonumber\\
&\times& \left\{ \!
 \begin{array} {ll} \! \Gamma (-\a/2) B_l (\a) \!  & \mbox{if $ \a \neq 2, 4, 6, \ldots $,}\\
{}\\
\displaystyle{\frac{2 (-1)^{\a/2-1}}{(\a/2)!} \frac {d}{d\a} B_l (\a)}, & \mbox{if $ \a = 2, 4, 6, \ldots \, ,$}\\
\end{array}
\right. \nonumber\eea
\[
B_l (\a) = \sum_{k=0}^l (-1)^{k} {l \choose k} k^{ \a}.
\]
The integer $\ell$ is arbitrary  with the choice $\ell = \a$
if $\a = 1, 3, 5, \ \ldots \ $ and any $ \ell > 2[\a/2]$ (the integer part of $\a/2$), otherwise.
 The limit in  (\ref{invs1a})
exists in the $L^p$-norm and in the almost
 everywhere sense. If, moreover,  $f$ is continuous, then the convergence in
 (\ref{invs1a}) is uniform on $\bbr^n$.

The inversion formula (\ref{invs1}) is non-local. If $\a$ is an even integer, then a local inversion formula
 $(-\Del_n)^{\a/2}I_n^\a f =f$ is available under additional smoothness assumptions for $f$; see \cite[Theorem 3.32]{Ru15} for details.

 The following theorem establishes remarkable connection between the Riesz potentials and the $k$-plane transform.

\begin{theorem}\label{howa1fu} For any   $1 \le k \le n-1$,
\be\label{kryafu} R_k^*R_kf=c_{k,n}\,I_n^{k} f, \qquad c_{k,n}=\frac{2^{k}\pi^{k/2} \Gam (n/2)}{\Gam ((n-k)/2)}, \ee
provided that either side of this equality exists in the Lebesgue sense.
\end{theorem}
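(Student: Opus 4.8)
The plan is to compute $R_k^*R_kf$ straight from the definitions \eqref{rtra1kfty}--\eqref{mmsdcrt} and to recognize the outcome as a constant multiple of the Riesz kernel $|x-y|^{k-n}$. First I would unfold the two transforms. Since a point of the plane $\g\eta_0+x$ has the form $x+u$ with $u$ in the linear subspace $\g\eta_0$, and the rotation $u=\g w$, $w\in\eta_0$, transports the Euclidean measure on $\g\eta_0$ isometrically to $\eta_0$, we get
\[
(R_k^*R_kf)(x)=\intl_{O(n)}(R_kf)(\g\eta_0+x)\,d\g=\intl_{O(n)}\intl_{\eta_0}f(x+\g w)\,d_{\eta_0}w\,d\g .
\]
Interchanging the order of integration (justified below) and using that, for a fixed $w$ with $|w|=s$, the vector $\g w$ is equidistributed over the sphere $\{|v|=s\}$ as $\g$ ranges over $O(n)$ with the invariant probability measure, the inner integral becomes the spherical mean $(M_sf)(x)=\sigma_{n-1}^{-1}\ins f(x+s\th)\,d\th$.

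Next I would pass to polar coordinates on the $k$-dimensional subspace $\eta_0\cong\bbr^k$. Because the remaining integrand depends on $w$ only through $s=|w|$, this collapses to $\sigma_{k-1}\intl_0^\infty (M_sf)(x)\,s^{k-1}\,ds$. Writing out the spherical mean and then reassembling $s^{k-1}\,ds\,d\th=|y-x|^{k-n}\,dy$ via the polar substitution $y=x+s\th$ in $\rn$ gives
\[
(R_k^*R_kf)(x)=\frac{\sigma_{k-1}}{\sigma_{n-1}}\irn\frac{f(y)}{|x-y|^{n-k}}\,dy .
\]
Comparing with \eqref{rpot} for $\a=k$ yields \eqref{kryafu} with constant $(\sigma_{k-1}/\sigma_{n-1})\,\gamma_n(k)$, and a one-line computation using $\sigma_{m-1}=2\pi^{m/2}/\Gam(m/2)$ shows
\[
\frac{\sigma_{k-1}}{\sigma_{n-1}}\,\gamma_n(k)=\frac{\pi^{k/2}\Gam(n/2)}{\pi^{n/2}\Gam(k/2)}\cdot\frac{2^{k}\pi^{n/2}\Gam(k/2)}{\Gam((n-k)/2)}=\frac{2^{k}\pi^{k/2}\Gam(n/2)}{\Gam((n-k)/2)}=c_{k,n},
\]
exactly as claimed.

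The only genuine point requiring care is the legitimacy of the interchanges of integration together with the clause ``provided either side exists in the Lebesgue sense.'' I would handle this by running the whole chain first with $|f|$ in place of $f$: every step is then a rearrangement of a nonnegative integrand, so Tonelli's theorem yields the identity $\intl_{O(n)}\intl_{\eta_0}|f(x+\g w)|\,d_{\eta_0}w\,d\g=(\sigma_{k-1}/\sigma_{n-1})\irn |f(y)|\,|x-y|^{k-n}\,dy$ as an equality in $[0,\infty]$. Consequently the left-hand side of \eqref{kryafu} is absolutely convergent at a given $x$ precisely when the Riesz integral $(I_n^kf)(x)$ converges absolutely, and in that case all uses of Fubini's theorem are justified for the decomposition $f=f_+-f_-$, which recovers \eqref{kryafu}. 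I expect this bookkeeping, rather than any analytic subtlety, to be the main thing to get right: the computation itself is a routine unfolding of the dual transform into spherical means.
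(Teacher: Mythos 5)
Your computation is correct, and it is precisely the ``straightforward derivation relying on the Fubini theorem'' that the paper alludes to: the paper does not write out a proof of this theorem (it cites Fuglede and notes the argument), but your unfolding of $R_k^*R_k$ into spherical means followed by polar coordinates, with Tonelli on $|f|$ to justify the interchange and to match the two sides' existence, is the standard argument and matches the style of the paper's own proof of the generalization in Theorem \ref{icgs9}. The constant checks out.
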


The formula (\ref{kryafu}) is due to Fuglede \cite{F}. Its derivation is straightforward and relies on the Fubini theorem. The following formulas can be proved in a similar way (cf. \cite[Proposition 4.38, Lemma 4.100]{Ru15},  \cite[Section 3]{Ru04}):
\be\label{ktyu}
R_k I_n^\a f\!=\!I_{n-k}^\a R_k f, \qquad
   I_n^\a R_k^*\vp\!=\! R_k^*I_{n-k}^\a\vp.\ee
Here $I_{n-k}^\a$ stands  for the Riesz potential  on the $(n-k)$-dimensional fiber of the Grassmannian bundle $\agnk$. As above, it is assumed  that either side of the corresponding equality exists in the Lebesgue sense.
If $f$ and $\vp$ are  good enough, these formulas extend by analyticity to all complex $\a$; cf. \cite [Theorem 2.6]{Ru98}. It suffices to assume that $f$ belongs to the Semyanistyi-Lizorkin space $\Phi (\rn)$ of Schwartz functions orthogonal to all polynomials and $\vp \in R_k (\Phi (\rn))$.

\begin{corollary}  \label{kryy6}  The inversion formula for the  dual $k$-plane transform
\be\label{ktyu77} \vp = c_{k,n}^{-1} R_k \bbd_n^k R_k^*\vp, \qquad c_{k,n}=\frac{2^{k}\pi^{k/2} \Gam (n/2)}{\Gam ((n-k)/2)}, \ee
holds provided that  $ \vp = R_k f$ for some $f\in L^p (\rn)$, $1\le p <n/k$.
\end{corollary}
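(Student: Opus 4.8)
The plan is to establish (\ref{ktyu77}) by a short three-step composition that reduces everything to the Fuglede identity of Theorem \ref{howa1fu} and the Riesz-potential inversion recalled in (\ref{invs1})--(\ref{invs1a}). No new estimates are needed; the entire content is the bookkeeping of domain conditions that makes each operation legitimate in the Lebesgue sense. In particular, the intertwining relations (\ref{ktyu}) play no role here and the argument is purely a matter of inverting the Riesz potential.

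First I would apply the dual transform $R_k^*$ to the hypothesis $\vp=R_k f$ and invoke Theorem \ref{howa1fu} to obtain
\be\label{pf-step1}
R_k^*\vp=R_k^*R_k f=c_{k,n}\,I_n^k f.
\ee
Since $f\in L^p(\rn)$ with $1\le p<n/k$, the Riesz potential $I_n^k f$ is finite almost everywhere by the sharp range recorded just after (\ref{rpot}); hence the right-hand side of (\ref{pf-step1}) exists in the Lebesgue sense and the identity is valid.

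Next I would apply the Riesz fractional derivative $\bbd_n^k$ to (\ref{pf-step1}). By definition $\bbd_n^k=(I_n^k)^{-1}$, and the hypersingular-integral inversion $\bbd_n^k I_n^k f=f$ holds precisely for $f\in L^p(\rn)$, $1\le p<n/k$, which is exactly our hypothesis; the limit in (\ref{invs1a}) then exists in the $L^p$-norm and almost everywhere. This gives
\be\label{pf-step2}
\bbd_n^k R_k^*\vp=c_{k,n}\,\bbd_n^k I_n^k f=c_{k,n}\,f.
\ee
Finally, applying the $k$-plane transform $R_k$ to (\ref{pf-step2}) and using the hypothesis $R_k f=\vp$ yields $R_k\bbd_n^k R_k^*\vp=c_{k,n}\,R_k f=c_{k,n}\,\vp$, which is (\ref{ktyu77}) after dividing by $c_{k,n}$.

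The one delicate point is the passage (\ref{pf-step2}): one must know that $\bbd_n^k$ genuinely recovers $f$ from $I_n^k f$, not merely that it is a formal left inverse. This is guaranteed by the $L^p$ inversion theorem cited at (\ref{invs1}), whose hypothesis $1\le p<n/k$ coincides exactly with the condition imposed in the statement. The matching of these two ranges --- the range that makes $I_n^k f$ finite and the range that makes the hypersingular inversion converge --- is precisely what renders the chain (\ref{pf-step1})--(\ref{pf-step2}) rigorous, and it explains why the admissible class in the corollary is stated as $1\le p<n/k$.
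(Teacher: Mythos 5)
Your proposal is correct and follows essentially the same route as the paper: substitute $\vp=R_kf$, apply Fuglede's identity (\ref{kryafu}) to get $R_k^*R_kf=c_{k,n}I_n^kf$, invert the Riesz potential via $\bbd_n^k$ on $L^p(\rn)$ with $1\le p<n/k$, and apply $R_k$ to recover $c_{k,n}\vp$. The only difference is that you spell out the domain justifications that the paper leaves implicit in its one-line chain of equalities.
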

\begin{proof} By (\ref{kryafu}),
\[ R_k \bbd_n^k R_k^* \vp= R_k \bbd_n^k R_k^* R_k f=c_{k,n} R_k \bbd_n^k I_n^{k}f=c_{k,n} R_k f=c_{k,n} \vp.\]
\end{proof}

For further purposes, we also mention the following factorization formula, which connects the Riesz potential (\ref{rpot}) with the Erd\'{e}lyi-Kober
fractional integrals. Specifically, if $f$ is a radial function on $\rn$, $f(x)= f_0 (|x|)$, then $(I_n^\a f) (x)= F_0 (|x|)$, where
\be\label{rfa}
 F_0 (r)=2^{-\a} r^{2 - n} (I^{ \a/2}_{+,2} s^{n - \a -2} I^{ \a/2}_{-,2} f_0 )(r);\ee
see \cite{Ru83}, \cite [formula (3.4.11)]{Ru15}.
It is assumed  that either side of (\ref{rfa}) exists in the Lebesgue sense.

\subsection{The Hyperplane Radon Transform, Its Dual,  and the Funk Transform}

We will need explicit relations connecting the hyperplane Radon transform, its dual, the Funk transform, and their inverses.  Some of these facts are new. Others are scattered in the literature in different forms.

Every hyperplane $\t\in G(n, n-1)$ can be  parametrized by the pair $(\eta, t)\in \bbs^{n-1} \times \bbr$, so that
\be\label{hppl} \t\!\equiv\! \t(\eta, t)\!=\!\{x\in \bbr^n: x \cdot \eta\!=\!t\}, \qquad \t(-\eta, -t)\!=\!\t(\eta, t).\ee
 The hyperplane Radon transform $g \to Rg$ takes a function $g$ on $\rn$ to a function $Rg$ on $G(n, n-1)$ (or on $\bbs^{n-1} \times \bbr$)  by the formula
\be\label{rtra1} (Rg) (\t)\equiv (Rg) (\eta, t)=\intl_{\eta^{\perp}} g(t\eta +
y) \,d_\eta y,\ee where
$d_\eta y$ denotes the Euclidean measure on $\eta^{\perp}$. The  dual transform $h \to R^*h$ averages  a
function $h$ on $G(n, n-1)$   over the set of all hyperplanes passing through a fixed point $x\in\bbr^n$. Specifically,
\be\label{durt}
(R^*h)(x)= \frac{1}{\sig_{n-1}}\intl_{\bbs^{n-1}} h(\eta, x\cdot \eta)\,d\eta. \ee
The Funk transform of an even  function $f$ on the unit sphere $\bbs^n$ in $\bbr^{n+1}$, $n\ge 2$, is defined by the formula
\be\label{Funk.aafu}
(Ff)(\th)= \intl_{\bbs^n\cap \,\th^\perp} f(\sig) \,d_\th \sig,\qquad \th \in \bbs^n,
\ee
 where  $d_\th \sig$ stands for the standard probability measure on the $(n-1)$-dimensional sphere  $\bbs^n\cap \th^\perp$.
 The integral operator $F$ is bounded from $L^1(\bbs^n)$ to $L^1(\bbs^n)$ and every integrable even function $f$ can be explicitly reconstructed from $Ff$. A variety of different inversion formulas depending on the class of functions can be found in \cite[Section 5.1]{Ru15}; see also \cite{GGG, H11}. For example, the following statement holds.

\begin{theorem}\label{invrhys} {\rm (cf. \cite[Theorem 5.40]{Ru15})} Let $\vp=Ff$, $f\in L^1_{even} (\bbs^n)$,
 \[\Phi_\th (s)=\frac{1}{\sig_{n-1}}\intl_{\bbs^n\cap\,\theta ^\perp}
\!\!\vp(s \sig +\sqrt{1-s^2}\theta)\,d\sig, \qquad -1\le s\le 1.\]
Then
\be\label{90ashel}
f(\th) \!=  \! \lim\limits_{t\to 1}  \left (\frac {1}{2t}\,\frac {\partial}{\partial t}\right )^{n-1} \!\left [\frac{2}{(n-2)!}\intl_0^t
(t^2 \!- \!s^2)^{(n-3)/2} \,\Phi_\th (s) \,s^{n-1}\,ds\right ].\qquad\qquad\ee
In particular, for $n$ odd,
\be\label{90ashele}
f(\th) \!=  \! \lim\limits_{t\to 1} \frac{\pi^{1/2}}{\Gam (n/2)} \left (\frac {1}{2t}\,\frac {\partial}{\partial t}\right )^{(n-1)/2}[t^{n-2}\Phi_\th (t)].\ee
Altenatively, for all $n\ge 2$ we have
\be\label{90ashys}
f(\th) \!=  \! \lim\limits_{t\to 1} \, \left (\frac {\partial}{\partial t}\right )^{n-1} \left [\frac{1}{(n-2)!}\intl_0^t
(t^2 \!- \!s^2)^{(n-3)/2}\Phi_\th (s) \,s\,ds\right ].\ee
The  limit in these formulas is understood in the $L^1$-norm. If $f\in C_{even}(\bbs^n)$, it can be interpreted in the $\sup$-norm.
\end {theorem}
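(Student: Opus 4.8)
The plan is to collapse the apparently two-stage averaging into a single one-variable Abel--Erd\'elyi--Kober equation and then invert it. Since both $F$ and the operator $\vp\mapsto\Phi_\th$ commute with rotations fixing $\th$, I would fix $\th=e_{n+1}$ and write points of $\bbs^n$ in polar coordinates about $\th$, namely $\zeta=(\sqrt{1-v^2}\,\psi,\,v)$ with $\psi\in\bbs^{n-1}=\bbs^n\cap\th^\perp$ and $v\in[-1,1]$. Introducing the latitude average
\be\label{latavg} w(v)=\frac{1}{\sig_{n-1}}\intl_{\bbs^{n-1}} f(\sqrt{1-v^2}\,\psi,\,v)\,d\psi,\ee
which is even in $v$ because $f$ is even and which satisfies $w(1)=f(\th)$ (in the Lebesgue sense, or pointwise if $f$ is continuous), the whole problem reduces to the single identity
\be\label{EKtarget} t^{\,n-2}\,\Phi_\th(t)=(I^{(n-1)/2}_{+,2}\,g)(t),\qquad g(1)=\frac{\Gam (n/2)}{\pi^{1/2}}\,f(\th),\ee
in which $g$ is obtained from $w$ by an explicit power weight coming from the polar Jacobian.

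To prove \eqref{EKtarget} I would substitute $\vp=Ff$ into the definition of $\Phi_\th$, write both spherical integrals explicitly, and apply Fubini. Averaging the single Funk integral $(Ff)(\eta)$ over all $\eta$ lying in the latitude sphere $\{\eta\cdot\th=\sqrt{1-t^2}\}$ produces a measure on $\bbs^n$ that is invariant under rotations fixing $\th$; hence the whole expression is a zonal integral $\intl_{-1}^1 w(v)\,\kappa_t(v)\,dv$. Analyzing the incidence condition $\zeta\cdot\eta=0$ shows that a point at latitude $v$ is reached only when $|v|\le t$, and a direct Jacobian computation evaluates $\kappa_t(v)$ as a constant multiple of $(t^2-v^2)^{(n-3)/2}$ times a power of $v$. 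Since $(n-3)/2=\a-1$ with $\a=(n-1)/2$, this is precisely the kernel of $I^{(n-1)/2}_{+,2}$ in (\ref{as34b12}), and bookkeeping of the weights and of the normalizing constants (sphere areas together with the Funk normalization) yields \eqref{EKtarget}. This Fubini-and-Jacobian step, with the exact computation of $\kappa_t$ and of the constant, is the main obstacle of the proof; the rest is formal inversion.

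With \eqref{EKtarget} in hand I would apply the left inverse $\Cal D^{(n-1)/2}_{+,2}$. By Theorem \ref{78awqe555} this returns $g$, so $\Cal D^{(n-1)/2}_{+,2}[t^{\,n-2}\Phi_\th](t)\to g(1)=\pi^{-1/2}\Gam(n/2)\,f(\th)$ as $t\to1$, which is the inversion formula in abstract form. The three displayed formulas are three representations of the single operator $\Cal D^{(n-1)/2}_{+,2}$. For $n$ odd the order $(n-1)/2$ is an integer and the integer-order rule gives $\Cal D^{(n-1)/2}_{+,2}=D^{(n-1)/2}$ with $D=\tfrac{1}{2t}\tfrac{d}{dt}$, which is exactly (\ref{90ashele}). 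For general $n$ I would use the semigroup property (\ref{kauky}) in the over-differentiated form $\Cal D^{(n-1)/2}_{+,2}=D^{\,n-1}I^{(n-1)/2}_{+,2}$; expanding the inner integral with kernel $(t^2-s^2)^{(n-3)/2}$ produces the bracket in (\ref{90ashel}). Finally, (\ref{90ashys}) is the counterpart of the same fractional derivative written with the plain derivative $(\partial/\partial t)^{n-1}$ (a Riemann--Liouville-type form, obtained from the $D$-form by passing from the variable $t^2$ to $t$), so it too follows from \eqref{EKtarget} by the fractional-derivative calculus.

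The limit $t\to1$ is justified as follows. The operator $F$ is bounded on $L^1(\bbs^n)$, so $\vp\in L^1$ and $\Phi_\th$ is well defined by Fubini; the family $\Cal D^{(n-1)/2}_{+,2}[t^{\,n-2}\Phi_\th]$ acts as a standard approximate identity as $t\to1$, so it converges to $g(1)$ in the $L^1(\bbs^n)$ norm in the variable $\th$, with uniform convergence when $f\in C_{even}(\bbs^n)$. Density of $C_{even}(\bbs^n)$ in $L^1_{even}(\bbs^n)$, together with the mapping properties recorded in Lemma \ref{lifa2}, completes the argument.
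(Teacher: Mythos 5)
The paper does not actually prove Theorem \ref{invrhys}: it imports the statement from \cite[Theorem 5.40]{Ru15}, so there is no in-paper argument to compare with. Your route --- fix $\th$, introduce the latitude average $w(v)$, show that $t^{n-2}\Phi_\th(t)$ is the Erd\'elyi--Kober integral $I^{(n-1)/2}_{+,2}$ of $\frac{\Gam(n/2)}{\sqrt{\pi}}\,w(v)/v$, and invert by $\Cal D^{(n-1)/2}_{+,2}$ --- is precisely the Funk--Radon--Helgason method used in that reference, and your target identity is correct, including the constant. The real gap is that you label the Fubini/Jacobian evaluation of the zonal kernel ``the main obstacle'' and then do not carry it out. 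It is short, so include it: for $\om=t\sig+\sqrt{1-t^2}\,\th$ set $e=(\th-\sqrt{1-t^2}\,\om)/t$, so that $\th=te+\sqrt{1-t^2}\,\om$ and every $\z\in\bbs^n\cap\om^\perp$ satisfies $\z\cdot\th=t\,(\z\cdot e)$; the coordinate $u=\z\cdot e$ of a uniform point of the $(n-1)$-sphere $\bbs^n\cap\om^\perp$ has density $\frac{\sig_{n-2}}{\sig_{n-1}}(1-u^2)^{(n-3)/2}$, and after averaging over $\sig$ the conditional distribution on each latitude is uniform by symmetry. Hence
\[
\Phi_\th(t)=\frac{2\sig_{n-2}}{\sig_{n-1}\,t^{n-2}}\intl_0^t(t^2-v^2)^{(n-3)/2}\,w(v)\,dv,
\]
which, since $\sig_{n-2}\Gam(\tfrac{n-1}{2})/\sig_{n-1}=\Gam(\tfrac n2)/\sqrt{\pi}$, is exactly your one-variable equation with $g(v)=\frac{\Gam(n/2)}{\sqrt{\pi}}\,w(v)/v$, $g(1)=\frac{\Gam(n/2)}{\sqrt{\pi}}f(\th)$.

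Two further points. First, your assertion that the three displayed formulas are ``three representations of the single operator $\Cal D^{(n-1)/2}_{+,2}$'' cannot stand as written, because (\ref{90ashel}) and (\ref{90ashele}) are mutually inconsistent: for $f\equiv 1$ and $n=3$ one has $\Phi_\th\equiv 1$, and (\ref{90ashel}) returns $\lim_{t\to 1}\bigl(\tfrac{1}{2t}\partial_t\bigr)^{2}\bigl[\tfrac23 t^3\bigr]=\tfrac12$, not $1$. Carrying your semigroup step through ($\Cal D^{(n-1)/2}_{+,2}=D^{\,n-1}I^{(n-1)/2}_{+,2}$ together with Legendre duplication) yields the constant $2^{n-1}/(n-2)!$ inside the bracket of (\ref{90ashel}) rather than $2/(n-2)!$; your method thus proves (\ref{90ashele}) and (\ref{90ashys}) with the printed constants and exposes a misprint in (\ref{90ashel}), and a complete write-up must say so rather than claim all three. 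Second, the limit $t\to 1$ is justified for the wrong reason: by Theorem \ref{78awqe555} the quantity $\Cal D^{(n-1)/2}_{+,2}[t^{n-2}\Phi_\th]$ equals $g(t)=\frac{\Gam(n/2)}{\sqrt{\pi}}\,w_\th(t)/t$ exactly for a.e.\ $t$, so it is not that family which is an approximate identity; it is $\th\mapsto w_\th(t)$, the spherical convolution of $f$ with the normalized measure on the latitude sphere of angular radius $\arccos t$ about $\th$, that converges to $f$ in $L^1(\bbs^n)$ (uniformly for continuous $f$) as $t\to 1$. Phrased this way, the density argument you invoke at the end is not needed.
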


Numerous properties of the hyperplane Radon transform, its dual, and the Funk transform  are described in the literature; see, e.g., \cite{GGG, H11, Ru15} and references therein. Our  main concern is explicit inversion formulas for these transforms under possibly minimal assumptions for functions.

 We set
 $\rn= \bbr e_1 \oplus \cdots \oplus \bbr e_{n}$,
  \be\label{r hemisphere} \bbs^{n}_+=\{ \th=(\th_1, \ldots, \th_{n+1})\in \bbs^{n}: 0<\th_{n+1} \le 1\}.\ee
Consider the projection map
 \be\label{Con22on} \rn \ni x \xrightarrow{\;\mu\;} \th\in \bbs^{n}_+, \qquad \th=\mu (x)=\frac{x+e_{n+1}}{|x+e_{n+1}|},\ee
for which
 \be\label{Coun} x=\mu^{-1} (\th)=\frac{\th'}{\th_{n+1}}, \qquad \th' =(\th_1, \ldots, \th_{n}).\ee
 The  map $\mu$ extends to the bijection $\tilde \mu$ from the affine Grassmannian  $G(n, n-1)$  onto the set
    \be\label{Con22on5} \tilde \bbs^{n}_+=\{\om =(\om_1, \ldots, \om_{n+1})\in \bbs^n: \,  0\le \om_{n+1}<1\}.\ee
cf. (\ref{r hemisphere}). Specifically, if $\t =\t(\eta,t)\in G(n, n-1)$, $\eta\in \bbs^{n-1} \subset \rn$, $t\ge 0$, and $\tilde \t$ is the $n$-dimensional subspace containing the lifted plane $\t +e_{n+1}$, then $\om$ is  a normal vector to $\tilde \t$, so that
 \be\label{Con22on1}  \om \equiv \tilde \mu (\t)=-\eta\, \cos \, \a +e_{n+1}\sin \a, \qquad \tan \a=t.\ee

\begin{theorem} \label{Con22on22} Let
\bea\label{lafit} (Af)(\t) &=&   \frac{\sig_{n-1}}{2\sqrt{1+|\t|^2}}\, f(\tilde \mu (\t)),   \qquad  \t\in G(n, n-1),\\
\label{lafit1}  (Bg)(\th)&=&  \frac{1}{|\th_{n+1}|^n}\,  g \left(\frac{\th'}{\th_{n+1}}\right),  \qquad \th  \in \bbs^n,\quad \th_{n+1}\neq 0.\eea
If
\be\label{lafit2} \intl_{\rn} \frac{|g(x)|\, dx}{\sqrt{1+|x|^2}}< \infty,\ee
then $Bg\in L^1_{even} (\bbs^n)$ and

\be\label{lafit3} (Rg)(\t)=(AFBg)(\t),\ee
where $F$ is the Funk transform (\ref {Funk.aafu}).
\end{theorem}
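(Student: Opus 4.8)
The plan is to recognize the great subsphere $\bbs^n\cap\om^\perp$ over which $(FBg)(\om)$ integrates as the central-projection image of the affine hyperplane $\t$, and then to transport the Funk integral down to $\t$ via the map $\mu$. Throughout I write $\om=\tilde\mu(\t)$ and $\rho=\rho(x)=\sqrt{1+|x|^2}$, and recall from (\ref{Con22on1}) that $|\t|=t$ and $\cos\a=(1+t^2)^{-1/2}$. First I would dispatch the two easy claims. Evenness of $Bg$ is immediate from (\ref{lafit1}), since replacing $\th$ by $-\th$ leaves the right-hand side unchanged. For integrability I would push $\|Bg\|_{L^1(\bbs^n)}$ down to $\rn$: the full central projection $\mu$ carries surface measure to $d\th=(1+|x|^2)^{-(n+1)/2}\,dx$, and since $\mu(x)'/\mu(x)_{n+1}=x$ and $\mu(x)_{n+1}=\rho^{-1}$ we have $(Bg)(\mu(x))=(1+|x|^2)^{n/2}g(x)$; the hemispherical integral then collapses to $\intl_{\rn}|g(x)|\,(1+|x|^2)^{-1/2}\,dx$, which is finite by (\ref{lafit2}). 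Evenness upgrades this to the full sphere, giving $Bg\in L^1_{even}(\bbs^n)$.

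For the identity (\ref{lafit3}) I would argue geometrically. Since $\om$ is a unit normal to the $n$-dimensional subspace $\tilde\t$ spanned by the lifted plane $\t+e_{n+1}$, one has $\om^\perp=\tilde\t$, so $\bbs^n\cap\om^\perp$ is the unit sphere of $\tilde\t$. Using evenness of $Bg$ and the probability normalization $d_\om\sig=\sig_{n-1}^{-1}\,d\sig$ (with $d\sig$ the surface measure), I would write $(FBg)(\om)=\frac{2}{\sig_{n-1}}\intl_{(\bbs^n\cap\om^\perp)_+}(Bg)(\sig)\,d\sig$, where the subscript $+$ marks the part with positive last coordinate. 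A short check shows that this open half is exactly the diffeomorphic image $\mu(\t)$ (every $\sig$ on it has $x=\sig'/\sig_{n+1}\in\t$ with $\mu(x)=\sig$), so the substitution $\sig=\mu(x)$, $x\in\t$, is legitimate.

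The crux is the Jacobian of $\mu|_\t:\t\to\bbs^n\cap\om^\perp$. I would compute $d\mu_p(V)=\rho^{-1}\big(V-(\hat p\cdot V)\hat p\big)$ at $p=(x,1)$, which is $\rho^{-1}$ times orthogonal projection onto $\hat p^\perp$. Restricting to the $(n-1)$-dimensional tangent space $T_p\t=\eta^\perp\times\{0\}$ and taking an orthonormal basis there, the Gram determinant of the projected vectors is $\det(I-aa^\top)=1-|a|^2$ with $|a|^2=\rho^{-2}|P_{\eta^\perp}x|^2=\rho^{-2}(|x|^2-t^2)$; since $x\cdot\eta=t$ forces $|x|^2=t^2+|P_{\eta^\perp}x|^2$, this simplifies to $(1+t^2)/(1+|x|^2)$. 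Hence the $(n-1)$-dimensional Jacobian is
\be
d\sig=\rho^{-(n-1)}\,\frac{\sqrt{1+t^2}}{\rho}\,d_\t x=\sqrt{1+t^2}\,(1+|x|^2)^{-n/2}\,d_\t x.
\ee

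Assembling the pieces finishes the proof. Substituting $(Bg)(\mu(x))=(1+|x|^2)^{n/2}g(x)$ together with the Jacobian into the Funk integral, the powers of $(1+|x|^2)$ cancel and I obtain $(FBg)(\om)=\frac{2\sqrt{1+t^2}}{\sig_{n-1}}\intl_\t g(x)\,d_\t x=\frac{2\sqrt{1+t^2}}{\sig_{n-1}}(Rg)(\t)$. Solving for $Rg$ reproduces exactly the weight $\frac{\sig_{n-1}}{2\sqrt{1+|\t|^2}}$ of (\ref{lafit}), so $(Rg)(\t)=(AFBg)(\t)$. The main obstacle is the Jacobian step: verifying that the projected differential indeed lands in the tangent space to the great sphere and evaluating the Gram determinant cleanly. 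Everything else is bookkeeping of the weight factors and the probability-measure normalization.
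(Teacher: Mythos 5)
Your proof is correct, but it takes a genuinely different route from the paper's. The paper does not compute anything on the hyperplane: it quotes Theorem 5.48 from \cite{Ru15}, which already expresses the Funk transform as $Ff=A_0RB_0f$ with explicit weights $A_0,B_0$, extends that identity by evenness to all of $\bbs^n$, and then simply passes to inverses, identifying $A=A_0^{-1}$ and $B=B_0^{-1}$ from the formulas for $\mu$ and $\tilde\mu$; the membership $Bg\in L^1_{even}(\bbs^n)$ is checked exactly as you do, via the pullback formula $d\th=(1+|x|^2)^{-(n+1)/2}dx$. You instead prove the identity from scratch: you identify the open half of the great subsphere $\bbs^n\cap\om^\perp$ with $\mu(\t)$, compute the $(n-1)$-dimensional Jacobian of $\mu|_\t$ via the Gram determinant $\det(I-aa^\top)=1-|a|^2=(1+t^2)/(1+|x|^2)$, and watch the powers of $(1+|x|^2)$ cancel. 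Both arguments are sound (your Jacobian is right, and the measure-zero set $\sig_{n+1}=0$ on the great sphere causes no trouble even when $t=0$). The trade-off: the paper's derivation is shorter and hands over the inverse operators $A^{-1}=A_0$, $B^{-1}=B_0$ needed immediately in Corollary \ref{corr}, at the price of depending on the cited theorem; your computation is self-contained, independently confirms the normalizing constants, and makes the projective equivalence of $R$ and $F$ geometrically transparent. One stylistic remark: since $Bg\in L^1(\bbs^n)$ is established first, your change of variables is an identity between absolutely convergent integrals for a.e.\ $\t$, so no further justification is needed -- it would be worth saying this explicitly rather than leaving the a.e.\ caveat implicit.
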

\begin{proof} We make use of Theorem 5.48 from \cite{Ru15}, according to which
\be\label{lafit4}  (Ff)(\om)=(A_0 RB_0 f)(\om), \qquad \om \in  \tilde \bbs^{n}_+, \ee
where the operators $A_0$ and $B_0$ have the form
\[
(A_0h)(\om)\!=\!  \frac{2}{\sig_{n-1}\, |\om'|}\, h(\tilde \mu^{-1} (\om)),\qquad \om \!=\!(\om', \om_{n+1}), \quad  \om' \!=\!(\om_1, \ldots, \om_{n}),\]
\[
(B_0 f)(x)= (1+|x|^2)^{-n/2} f(\mu (x)), \qquad x\in \rn.\]
Extending (\ref{lafit4}) by the evenness to all $\om \in \bbs^n$ and passing to inverses, we obtain
\[
(Rg)(\t)=(A_0^{-1}FB_0^{-1} g)(\t), \qquad \t\in G(n, n-1),\]
where  $A_0^{-1}$ and $B_0^{-1}$ are defined by the formulas
\be\label{lafit5} (A_0^{-1}f)(\t)= \frac{\sig_{n-1}}{2\sqrt{1+|\t|^2}}\, f(\tilde \mu (\t))\qquad (\equiv (A f)(\t)),\ee
(here we use (\ref{Con22on1}), so that $ |\om'|^2\!=\!\cos^2 \a\!=\!(1\!+\!t^2)^{-1}\!=\!(1\!+\!|\t|^2)^{-1}$),
\be\label{lafit6} (B_0^{-1}g)(\th)= \frac{1}{|\th_{n+1}|^n}\, g \left(\frac{\th'}{\th_{n+1}}\right)  \qquad (\equiv (Bg)(\th)),\ee
(here we use (\ref{Coun}) and extend the right-hand side as an even function of $\th$). This gives (\ref{lafit3}).

The relation $Bg\in L^1_{even} (\bbs^n)$ that guarantees the existence of the Funk transform  in (\ref{lafit3}) and justifies the above reasoning, is a consequence of  (\ref{lafit2}). Indeed, using, e.g.,  \cite[formula (1.12.17)]{Ru15}, and assuming $g\ge 0$, we have
\[\intl_{\bbs^n}(Bg)(\th)\,d\th =2\intl_{\bbs^n} g \left(\frac{\th'}{\th_{n+1}}\right)\, \frac{d\th}{\th_{n+1}^n}=
2\intl_{\bbr^n}\frac{g(x)\, dx}{\sqrt{1+|x|^2}}<\infty.\]
\end{proof}

\begin{corollary}\label{corr} If $g$ satisfies (\ref{lafit2}), then the Radon transform $h(\t)=(Rg)(\t)$ exists in the Lebesgue sense for almost all $\t\in G(n, n-1)$ and can be inverted by the formula
\be\label{lafit7}
g(x)=(B^{-1}F^{-1} A^{-1}h)(x),\ee
where  the inverse Funk transform $F^{-1}$ is  evaluated by Theorem \ref{invrhys} and the operators $A^{-1}$ and $B^{-1}$ are defined by the formulas
\be\label{lafit8}
(A^{-1}h)(\om)=    \frac{2}{\sig_{n-1}\, |\om'|}\, h(\tilde \mu^{-1} (\om)),  \qquad |\om'|\neq 0,\ee
\be\label{lafit9}
(B^{-1}f)(x)=  (1+|x|^2)^{-n/2} f(\mu (x)).   \ee
\end{corollary}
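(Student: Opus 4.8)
The plan is to read off the inversion directly from the factorization $(Rg)(\t)=(AFBg)(\t)$ established in Theorem \ref{Con22on22}, since once this identity is in hand the corollary reduces to the observation that each of the three factors $A$, $F$, $B$ is invertible with an explicitly known inverse. First I would record that $A$ and $B$ are, up to a nonvanishing scalar multiplier, changes of variable driven by the bijections $\tilde\mu$ and $\mu$ of (\ref{Con22on1}) and (\ref{Coun}); hence they are invertible, and their inverses are precisely the operators $A_0$ and $B_0$ appearing in the proof of Theorem \ref{Con22on22}, which is exactly what (\ref{lafit8}) and (\ref{lafit9}) assert. The remaining factor $F$ is the Funk transform, which is injective on $L^1_{even}(\bbs^n)$ and whose inverse is furnished by the limit formulas of Theorem \ref{invrhys}.

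Next I would settle the existence claim. Under the hypothesis (\ref{lafit2}), Theorem \ref{Con22on22} guarantees $Bg\in L^1_{even}(\bbs^n)$. Since $F$ is bounded from $L^1(\bbs^n)$ to $L^1(\bbs^n)$, the function $FBg$ again lies in $L^1_{even}(\bbs^n)$ and is therefore finite for almost all $\om\in\bbs^n$. The operator $A$ only multiplies this function by the locally bounded factor $\sig_{n-1}(2\sqrt{1+|\t|^2})^{-1}$ and transports it along the bijection $\tilde\mu$, so $h=Rg=AFBg$ is finite for almost every $\t\in G(n,n-1)$, as required.

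Finally, to obtain (\ref{lafit7}) I would simply peel off the three factors in turn. Applying $A^{-1}$ to $h=AFBg$ returns $FBg$, where one checks $AA^{-1}=\mathrm{id}$ as an almost-everywhere identity using $\tilde\mu\circ\tilde\mu^{-1}=\mathrm{id}$ together with the matching of multipliers $|\om'|^2=(1+|\t|^2)^{-1}$ noted after (\ref{lafit5}); applying $F^{-1}$ then recovers $Bg$ by Theorem \ref{invrhys}, and applying $B^{-1}$ recovers $g$ since $\mu\circ\mu^{-1}=\mathrm{id}$ and the factors $(1+|x|^2)^{-n/2}$ and $\th_{n+1}^{-n}$ cancel by (\ref{Coun}). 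The one point deserving genuine care—rather than a true obstacle—is the legitimacy of composing operators that are only defined almost everywhere, and in particular the verification that the inversion of Theorem \ref{invrhys} applies to the merely $L^1$ datum $FBg$ and returns $Bg$ in the $L^1$-norm; since this is exactly the scope of Theorem \ref{invrhys}, no new estimate is needed, and the corollary follows by direct composition.
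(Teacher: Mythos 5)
Your proposal is correct and follows exactly the route the paper intends: the corollary is an immediate consequence of the factorization $Rg=AFBg$ from Theorem \ref{Con22on22}, with $A^{-1}=A_0$ and $B^{-1}=B_0$ read off from that theorem's proof, the $L^1$-boundedness of $F$ giving a.e.\ existence, and Theorem \ref{invrhys} supplying $F^{-1}$ on the $L^1_{even}$ datum $Bg$. The paper offers no separate proof precisely because it is this direct composition, so nothing further is needed.
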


Our next aim is to obtain similar statements for the dual Radon transform (\ref{durt}).

\begin{theorem} \label{Cuuip} Let
\bea\label{lafitd} (A_*f)(x)&=& \frac{1}{\sqrt{1+|x|^2}}\, f\left(\frac{-x+e_{n+1}}{\sqrt{1+|x|^2}}\right),   \qquad  x\in \rn,\\
\label{lafit1d}  (B_*h)(\th) &=& \frac{1}{|\th'|^n}\, h \left(\frac{\th'}{|\th'|},\frac{\th_{n+1}}{|\th'|} \right),  \qquad \th  \in \bbs^n,\quad |\th'|\!\neq \!0.\qquad \eea
If
\be\label{lafit2d} \intl_{G(n, n-1)} \frac{|h(\t)|\, d\t}{\sqrt{1+|\t|^2}}< \infty,\ee
then $B_* h\in L^1_{even} (\bbs^n)$ and
\be\label{lafit3d} (R^*h)(x)=(A_*FB_*h)(x),\ee
where $F$ is the Funk transform (\ref {Funk.aafu}).
\end{theorem}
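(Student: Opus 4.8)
The plan is to verify the factorization (\ref{lafit3d}) by a direct substitution on the fibre of $G(n,n-1)$, in the same spirit as Theorem \ref{Con22on22} but computing $A_*FB_*h$ from scratch rather than inverting a known relation. (One could instead invoke a dual companion of \cite[Theorem 5.48]{Ru15}, factoring the Funk transform through $R^*$ on $\tilde\bbs^n_+$, and pass to inverses exactly as in the proof of Theorem \ref{Con22on22}; the direct route sketched here is self-contained.) Fix $x\in\rn$ and let $\th=(-x+e_{n+1})/\sqrt{1+|x|^2}$ be the point at which $A_*$ evaluates $FB_*h$, cf. (\ref{lafitd}). First I would describe the great subsphere over which the Funk transform integrates: $\sig\in\bbs^n\cap\th^\perp$ iff $\sig\cdot(-x+e_{n+1})=0$, that is $\sig_{n+1}=\sig'\cdot x$, where $\sig=(\sig',\sig_{n+1})$. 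Parametrizing it by $\eta=\sig'/|\sig'|\in\bbs^{n-1}$ — a bijection, since $\sig'=0$ would force $\sig_{n+1}=0$ and hence $\sig=0$ — gives $\sig(\eta)=(\eta,\,x\cdot\eta)/\sqrt{1+(x\cdot\eta)^2}$, and the decisive observation is that the slope coordinate is $\sig_{n+1}/|\sig'|=x\cdot\eta$. Consequently $(B_*h)(\sig)=(1+(x\cdot\eta)^2)^{n/2}\,h(\eta,\,x\cdot\eta)$, which already exhibits the integrand $h(\eta,x\cdot\eta)$ of $R^*h$ in (\ref{durt}).

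The heart of the matter is the Jacobian of $\eta\mapsto\sig(\eta)$. Writing $\sig(\eta)=(\eta\cos\b,\sin\b)$ with $\tan\b=x\cdot\eta$, a computation of the induced metric (the tangential part of $x$ contributes $|\nabla\b|^2$) yields the area element
\[
dS=\sqrt{1+|x|^2}\,\big(1+(x\cdot\eta)^2\big)^{-n/2}\,d\eta .
\]
Since $d_\th\sig$ is the probability measure on the unit $(n-1)$-sphere $\bbs^n\cap\th^\perp$, we have $d_\th\sig=dS/\sig_{n-1}$, and the weight $(1+(x\cdot\eta)^2)^{n/2}$ produced by $B_*$ cancels the like power in $dS$. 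Hence
\[
(FB_*h)(\th)=\frac{\sqrt{1+|x|^2}}{\sig_{n-1}}\ins h(\eta,\,x\cdot\eta)\,d\eta ,
\]
and multiplying by the prefactor $(1+|x|^2)^{-1/2}$ of $A_*$ returns exactly $(R^*h)(x)$ as defined in (\ref{durt}).

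It remains to justify $B_*h\in L^1_{even}(\bbs^n)$, which legitimizes the use of the Funk transform above. Evenness is immediate from $h(\eta,t)=h(-\eta,-t)$ on $G(n,n-1)$. For integrability I would change variables $\th\mapsto(\eta,t)=(\th'/|\th'|,\,\th_{n+1}/|\th'|)$, a bijection of $\bbs^n$ (off the null equator $\th'=0$) onto the $(\eta,t)$-chart of $G(n,n-1)$, with area element $(1+t^2)^{-(n+1)/2}\,d\eta\,dt$. The weight $|\th'|^{-n}=(1+t^2)^{n/2}$ then collapses $\intl_{\bbs^n}|B_*h|\,d\th$ to a constant multiple of $\intl_{G(n,n-1)}|h(\t)|\,(1+|\t|^2)^{-1/2}\,d\t$, finite by hypothesis (\ref{lafit2d}); this repeats the closing computation in the proof of Theorem \ref{Con22on22}.

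The main obstacle is bookkeeping, concentrated in the area-element identity above: one must check that the $\eta$-dependent factors $(1+(x\cdot\eta)^2)$ cancel in full and that the surviving constant is precisely $\sqrt{1+|x|^2}$, so that it is annihilated by $A_*$ and no residual weight in $\eta$ remains to spoil the match with (\ref{durt}). A secondary point to reconcile is the reflection $x\mapsto-x$ built into the argument $(-x+e_{n+1})/\sqrt{1+|x|^2}$ of $A_*$: it originates in the normal-orientation convention $\tilde\mu(\t)=-\eta\cos\a+e_{n+1}\sin\a$ of (\ref{Con22on1}), and is exactly what turns the incidence condition into $\sig_{n+1}=\sig'\cdot x$ and hence the slope into $+x\cdot\eta$; had the sign been reversed, evenness $h(\eta,t)=h(-\eta,-t)$ together with the substitution $\eta\mapsto-\eta$ on $\bbs^{n-1}$ would restore the same result. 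Verifying these two items is what the proof really comes down to.
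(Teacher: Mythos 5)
Your proposal is correct, but it proves the identity by a route genuinely different from the paper's. The paper does not compute anything on the great subsphere: it quotes Lemma 4.16 of \cite{Ru15}, which factors the dual transform as $R^*=\tilde A R\tilde B$ through the ordinary Radon transform, composes this with the already established factorization $R=AFB$ of Theorem \ref{Con22on22} to get $R^*=\tilde A\,A\,F\,B\,\tilde B$, and then the whole proof reduces to simplifying the compositions $A_*=\tilde A A$ and $B_*=B\tilde B$ into the closed forms (\ref{lafitd})--(\ref{lafit1d}); the integrability hypothesis is used only to check that $\tilde Bh$ satisfies (\ref{lafit2}) so that Theorem \ref{Con22on22} applies. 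You instead verify $A_*FB_*h=R^*h$ from scratch by parametrizing $\bbs^n\cap\th^\perp$, $\th=\mu(-x)$, by $\eta\in\bbs^{n-1}$ and computing the induced area element; I checked your key identity $dS=\sqrt{1+|x|^2}\,(1+(x\cdot\eta)^2)^{-n/2}\,d\eta$ (the induced metric is $\cos^2\beta\,g_{\bbs^{n-1}}+\cos^4\beta\,x_T\otimes x_T$ with $\cos^2\beta=(1+(x\cdot\eta)^2)^{-1}$, whose determinant gives exactly this), and the cancellation against the factor $(1+(x\cdot\eta)^2)^{n/2}$ from $B_*$ and the prefactor of $A_*$ comes out right, as does your $L^1$ computation with the element $(1+t^2)^{-(n+1)/2}d\eta\,dt$. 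The trade-off: the paper's argument is shorter and explains where the specific shapes of $A_*$ and $B_*$ come from (they are forced as compositions of known maps), but it leans on two external factorizations; yours is self-contained, makes the weight cancellation transparent, and correctly isolates the two delicate points (the Jacobian and the sign convention in $\tilde\mu$), though as written the metric computation is asserted rather than carried out and would need to be written in full for a complete proof.
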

\begin{proof} We make use Lemma 4.16 from \cite{Ru15}, according to which
\be\label{slvkjn}
  \!\!(R^* h)(x)=(\tilde A R\tilde Bh)(x),\ee
\bea  \label{slvkjn5} (\tilde A \tilde h)(x)&=&\frac{2}{|x|\, \sig_{n-1}}\, \tilde h\left (\frac{x}{|x|}, \frac{1}{|x|}\right ), \\
\label{slvkjn66} (\tilde Bh)(x)\!&=&\! \frac{1}{|x|^n} \,h\left (\frac{x}{|x|}, \frac{1}{|x|}\right ), \qquad x\in \rn \setminus \{0\}.\eea
Here $\tilde h$ is a  function on $G(n, n-1)$ parametrized by the pair $(\eta, t)\in \bbs^{n-1} \times \bbr_+$
 and extended to $G(n, n-1) \sim \bbs^{n-1} \times \bbr$, so that $\tilde h(-\eta, -t)\!=\!\tilde h(\eta, t)$. In our case, $\eta=x/|x|$, $t=1/|x|$.
Combining  (\ref{slvkjn}) with (\ref{lafit3}), we obtain $R^*  = \tilde A AFB\tilde B$. This formally gives (\ref{lafit3d}) with $A_*=\tilde A A$, $B_*=B\tilde B$.

Let us write
$\tilde A A$ and $B\tilde B$ in the desired form. By (\ref{slvkjn5}) and (\ref{lafit}),
\[ (A_*f)(x)=(\tilde A Af)(x)=\frac{2}{|x|\, \sig_{n-1}} (Af)\left (\frac{x}{|x|}, \frac{1}{|x|}\right )=\frac{1}{\sqrt{1+|x|^2}}\, f(\tilde \mu (\t)),\]
where $\t=\t(\eta,t)$ with  $\eta= x/|x|$, $t=1/|x|$. By (\ref{Con22on1}) with $\tan \a=t =1/|x|$, we have
\[
 \tilde \mu (\t)=-\eta\, \cos \, \a +e_{n+1}\sin \a=\frac{-x+e_{n+1}}{\sqrt{1+|x|^2}}.\]
 This gives (\ref{lafitd}). Further, by (\ref {lafit1}) and (\ref {slvkjn66}),
\[(B_*h)(\th)=(B\tilde Bh)(\th)=\frac{1}{|\th_{n+1}|^n}\,  (\tilde Bh) \left(\frac{\th'}{\th_{n+1}}\right)=\frac{1}{|\th'|^n}\, h \left(\frac{\th'}{|\th'|},\frac{\th_{n+1}}{|\th'|} \right) \]
provided that $\th_{n+1}>0$  and $|\th'|\neq 0$. Here $h (\cdot, \cdot) \equiv h(\eta, t)$ with $\eta= \th'/|\th'|$, $t=\th_{n+1}/|\th'|$. Because $h(\eta, t) =h(-\eta, -t)$, the last expression  gives (\ref{lafit1d}).

To complete the proof, it remains to show that $g=\tilde Bh$ satisfies (\ref{lafit2}). For $h\ge 0$, passing to polar coordinated and changing variables, we have
\bea
&&\intl_{\rn} \frac{(\tilde Bh)(x)\, dx}{\sqrt{1+|x|^2}}=\intl_{\rn}  \frac{1}{|x|^n} \,h\left (\frac{x}{|x|}, \frac{1}{|x|}\right )\,
\frac{dx}{\sqrt{1+|x|^2}}\nonumber\\
&&=\intl_0^\infty \frac{dr}{r \sqrt{1+r^2}}\intl_{\bbs^{n-1}} h(\eta, 1/r)\, d\eta=\intl_0^\infty \frac{dt}{\sqrt{1+t^2}}\intl_{\bbs^{n-1}} h(\eta, t)\, d\eta\nonumber\\
&&= \intl_{G(n, n-1)} \frac{h(\t)\, d\t}{\sqrt{1+|\t|^2}}< \infty.\nonumber\eea
\end{proof}

\begin{corollary}\label{cord} If $h$ satisfies (\ref{lafit2d}), then the dual Radon transform $g(x)=(R^*h)(x)$  can be inverted by the formula
\be\label{lafit77}
h(\t)=(B_*^{-1}F^{-1} A_*^{-1}g)(\t),\qquad  \t\in G(n, n-1),\ee
where  the inverse Funk transform $F^{-1}$ is  evaluated by Theorem \ref{invrhys} and the operators $A_*^{-1}$ and $B_*^{-1}$ are defined by the formulas
\bea\label{lafit88}
(A_*^{-1}g)(\om)&=&\frac{1}{|\th_{n+1}|}\, g\left(-\frac{\th'}{\th_{n+1}}\right), \qquad \th \in \bbs^n, \quad  \th_{n+1} \neq 0,\\
\label{lafit99}
(B_*^{-1}f)(\t) &=& (1\!+\!|\t|^2)^{-n/2} f(\tilde\mu (-\t)), \quad -\t\!=\!\{x\!\in \!\rn : -x\!\in \! \t\}.   \qquad \quad \eea
\end{corollary}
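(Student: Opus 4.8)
The plan is to invert the factorization established in Theorem \ref{Cuuip}. There, under hypothesis (\ref{lafit2d}), it is shown that $B_*h\in L^1_{even}(\bbs^n)$ and that $R^*=A_*FB_*$; see (\ref{lafit3d}). Since $A_*$ and $B_*$ are invertible changes of variable (each composed with a positive multiplier) and the Funk transform $F$ is invertible on $L^1_{even}(\bbs^n)$ by Theorem \ref{invrhys}, it suffices to check that the operators $A_*^{-1}$ and $B_*^{-1}$ given by (\ref{lafit88}) and (\ref{lafit99}) are genuine left inverses of $A_*$ and $B_*$. Granting this and writing $g=R^*h=A_*FB_*h$, we would obtain
\[
B_*^{-1}F^{-1}A_*^{-1}g=B_*^{-1}F^{-1}(A_*^{-1}A_*)FB_*h=B_*^{-1}(F^{-1}F)B_*h=(B_*^{-1}B_*)h=h,
\]
which is exactly (\ref{lafit77}). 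Thus the whole argument reduces to two elementary pointwise verifications together with an appeal to the Funk inversion of Theorem \ref{invrhys}.

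To verify $A_*^{-1}A_*=\mathrm{id}$, I would trace the point map hidden in (\ref{lafitd}). Writing $\th=(-x+e_{n+1})/\sqrt{1+|x|^2}\in\bbs^n$, one has $\th_{n+1}=1/\sqrt{1+|x|^2}$ and $\th'=-x/\sqrt{1+|x|^2}$, whence $x=-\th'/\th_{n+1}$ and $\sqrt{1+|x|^2}=1/|\th_{n+1}|$; for $\th_{n+1}>0$ this also gives back $\th'+\th_{n+1}e_{n+1}=\th$. Substituting these relations into (\ref{lafitd}) shows that the prefactor $1/\sqrt{1+|x|^2}$ becomes $|\th_{n+1}|$, so that applying (\ref{lafit88}) cancels this factor and returns the original value $f(\th)$. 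This confirms $A_*^{-1}A_*=\mathrm{id}$ on the relevant class of functions on $\bbs^n$.

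The verification that $B_*^{-1}B_*=\mathrm{id}$ is analogous but uses the geometry of $\tilde\mu$. For $\t=\t(\eta,t)$ with $\eta\in\bbs^{n-1}$ and $t\ge 0$ one has $-\t=\t(-\eta,t)$, and by (\ref{Con22on1}), $\tilde\mu(-\t)=(\eta+te_{n+1})/\sqrt{1+t^2}$. Hence for $\th=\tilde\mu(-\t)$ we get $|\th'|=(1+t^2)^{-1/2}=(1+|\t|^2)^{-1/2}$, together with $\th'/|\th'|=\eta$ and $\th_{n+1}/|\th'|=t$. Substituting into (\ref{lafit1d}) yields $(B_*h)(\tilde\mu(-\t))=(1+|\t|^2)^{n/2}h(\t)$, so the factor $(1+|\t|^2)^{-n/2}$ in (\ref{lafit99}) cancels it and returns $h(\t)$. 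Because $B_*h\in L^1_{even}(\bbs^n)$ by Theorem \ref{Cuuip}, the inverse Funk transform $F^{-1}$ from Theorem \ref{invrhys} applies legitimately, which closes the chain of cancellations.

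I expect the main obstacle to be purely bookkeeping rather than conceptual: one must match the sign conventions so that the minus in the argument of (\ref{lafit88}) and the passage to $-\t$ in (\ref{lafit99}) are consistent with the minus sign in the definition (\ref{lafitd}) of $A_*$ and with the formula (\ref{Con22on1}) for $\tilde\mu$. Once these sign flips and the reciprocal Jacobian factors $(1+|\cdot|^2)^{\pm 1/2}$ are tracked correctly, the cancellations are immediate and the inversion formula (\ref{lafit77}) follows.
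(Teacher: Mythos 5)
Your proposal is correct and follows exactly the route the paper intends: the paper gives no separate proof of Corollary \ref{cord} beyond the remark that the expressions for $A_*^{-1}$ and $B_*^{-1}$ follow from (\ref{lafitd}), (\ref{lafit1d}) and the definitions of $\mu$ and $\tilde\mu$, and you have simply carried out that inversion of the factorization $R^*=A_*FB_*$ with the pointwise checks done correctly (including the sign bookkeeping that produces $-\th'/\th_{n+1}$ and $\tilde\mu(-\t)$). The only point worth making explicit is that $A_*$ only samples the open upper hemisphere, so $A_*^{-1}A_*=\mathrm{id}$ relies on the evenness of $FB_*h$, which holds since the Funk transform of an $L^1_{even}$ function is even.
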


The analytic expressions for $A_*^{-1}$ and $B_*^{-1}$ are consequences of the formulas  (\ref{lafitd}) and (\ref{lafit1d})
 for  $A_*$ and $B_*$ and the definitions of the maps $\mu$ and $\tilde \mu$.

\begin{remark} If $h$ is infinitely differentiable and rapidly decreasing, then the inversion formula for $R^*$ can be written in the form
(\ref {ktyu77}). Specifically,
\be\label{ktyu7777} h = c_n R\, \bbd_n^{n-1} R^* h, \qquad c_n = 2^{1-n}\pi^{1-n/2}/\Gam (n/2).\ee
Here $\bbd_n^{n-1}= (-\Del_n)^{(n-1)/2}$ if $n$ is odd. If $n$ is even, then $\bbd_n^{n-1}=\Lam^{n-1}$, where $\Lam=\sum_{j=1}^n \H_j\partial_j$ is the Calderon-Zygmund operator, $\H_j$, being some singular integral operators, called  the Riesz transforms.  In this case, $\bbd_n^{n-1} R^* h \in C^\infty (\rn)$ and has the order $O(|x|^{-n})$; see Solmon \cite[p. 340]{So2} for details. The modification of  (\ref{ktyu7777}) in the form $h = c_n \bbd_1^{n-1} R R^* h$
was proved by Helgason \cite[Theorem 4.5]{H65}  in the framework of the Semyanistyi-Lizorkin spaces of  Schwartz functions orthogonal to polynomials; see also \cite[formula (4.6.38)]{Ru15}. When $n$ is odd, the formula (\ref{ktyu7777}) was obtained by Gonzalez  in \cite{Go84} (see also \cite[Theorem 2.1]{Go87}). More information on this subject, including further references,  can be found in \cite[p. 275, Notes 4.6]{Ru15}.

Our formula (\ref{lafit77}) does not contain singular integral operators and is applicable to a much larger class of functions.
\end{remark}

 \section {Mixed $j$-Plane to $k$-Plane Radon Transforms}

\subsection {Setting of the Problem}

Let  $\t\in \agnj$, $\z\in \agnk$, so that
 \[\t\equiv \t (\xi,u)=\xi +u, \qquad  \xi\in \gnj, \quad u\in \xi^{\perp},\]
\[\z\equiv \z(\eta,v)=\eta +v, \qquad  \eta\in \gnk, \quad v\in \eta^{\perp}.\]
The planes $\t (\xi,u)$ and $\z(\eta,v)$ are called perpendicular (we write $\t\perp \z$) if $a \cdot b=0$ for all $a\in \xi$ and  $b\in \eta$. We define the set of incidence
\[ \frI= \{ (\t,\z) \in  \agnj \times  \agnk : \, \t\perp \z, \quad \t\cap \z\neq \varnothing \} \]
and denote
\[ \hat \z=\{ \t \in \agnj :\;(\t,\z) \in \frI\}, \qquad \check  \t=\{ \z \in \agnk : \;(\t,\z) \in \frI\}.\]
The corresponding {\it mixed $j$-plane to $k$-plane Radon transform} has the form
\be\label{rpeeq} (\rjk f)(\z)=\intl_{\hat \z} f(\t)\, d_\z\t, \qquad (\rkj\vp)(\t)=\intl_{\check \t} \vp(\z)\, d_\t\z.\ee

\begin{remark}
It is clear that if $(\t,\z) \in \frI$, then, necessarily, $j+k\le n$, because otherwise, there exist $a\in \xi$ and  $b\in \eta$ such that $a \cdot b\neq 0$.  If $j+k= n$, then $\xi=\eta^\perp$ and
\[
(\rjk f)(\z)\equiv (\rjk f)(\eta,v)=\intl_{\eta}f(\eta^\perp+u)\,d_\eta u\quad \forall \,  v\in \eta^\perp.\]
This integral operator is in general non-injective. Indeed, if $f$ is a radial function, that is, $f(\xi, u)\equiv \tilde f (|u|)$ for some single-variable function $\tilde f$, then $\rjk f \equiv \const$  on the set of all  functions $f$ of the form $f(\xi, u)=\tilde f_\lam (|u|)=\lam ^{-k} \tilde f(|u|/\lam)$, $\lam >0$. Specifically, by rotation and dilation invariance,
\[
(\rjk f)(\z)=\intl_{\eta}\tilde f_\lam (|u|)\, du=\intl_{\bbr^{k}}\tilde f_\lam (|u|)\, du=\intl_{\bbr^{k}}\tilde f (|u|)\, du\equiv \const,\]
\[
\bbr^{k}=\bbr e_1 \oplus \cdots \oplus\bbr e_{k},\]
provided that this integral converges. Because of the lack of injectivity, the case $j+k= n$ will be excluded from our consideration and we always assume $j+k<n$.
 \end{remark}

The operators  (\ref{rpeeq}) can be explicitly  written as
\be\label{rpq}
(\rjk f)(\z)\equiv (\rjk f)(\eta,v)=\intl_{G_j(\eta^{\perp})}\,d_{\eta^{\perp}} \xi\intl_{\eta}f(\xi+u+v)\,d_\eta u,
\ee
\be\label{rpqd}
(\rkj\vp)(\t)=(\rkj\vp)(\xi,u)=\intl_{G_{k}(\xi^{\perp})}\,d_{\xi^{\perp}}\eta\intl_{\xi}f(\eta+u+v)\,d_\xi v.
\ee
Here $d_{\eta^{\perp}} \xi$ and $d_{\xi^{\perp}}\eta$ denote the canonical probability measures on the corresponding Grassmannians $G_j (\eta^\perp)$ and $G_{k}(\xi^{\perp})$,  $d_\eta u$ and   $d_\xi v$ stand for the   Euclidean measures on  $\eta$ and $\xi$, respectively.

If $j=0$ and  $k\ge 1$, then  $\rjk$ is the Radon-John transform (\ref{rtra1kfty}).
If $j\ge 1$ and  $k=0$,  then  $\rjk$ is the dual $j$-plane  transform; cf. (\ref{mmsdcrt}) with $k$ replaced by $j$.

 Our aim is to investigate the operators (\ref{rpq}) and (\ref{rpqd}) under the assumption
 \[ j>0, \qquad k>0, \qquad j+k< n.\]

\subsection{Duality}

\begin{lemma}\label{drl}
The duality relation
\be\label{dr}
\intl_{\agnk}(\rjk f)(\z)\vp(\z)\,d\z=\intl_{\agnj}f(\t)(\rkj\vp)(\t)\,d\t
\ee
holds provided that either integral exists in the Lebesgue sense.
\end{lemma}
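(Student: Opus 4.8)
The plan is to verify the duality relation by unfolding both transforms into iterated Euclidean integrals and applying Fubini's theorem. First I would substitute the explicit formulas (\ref{rpq}) and (\ref{rpqd}) into the two sides of (\ref{dr}), using the parametrizations $\z\equiv\z(\eta,v)$ with $\eta\in\gnk$, $v\in\eta^\perp$, and $\t\equiv\t(\xi,u)$ with $\xi\in\gnj$, $u\in\xi^\perp$, together with the product measures $d\z=d\eta\,dv$ and $d\t=d\xi\,du$. The left-hand side then becomes
\be\label{plan1}
\iagrp \!\!d\eta \intl_{\eta^\perp}\!\!dv\,\vp(\eta,v) \intl_{G_j(\eta^\perp)}\!\!d_{\eta^\perp}\xi \intl_{\eta}\!\!f(\xi+u+v)\,d_\eta u,
\ee
where I write $\iagrp$ loosely for integration over $\gnk$; the right-hand side unfolds analogously with the roles of $\xi,\eta$ and $u,v$ interchanged.

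Next I would recognize that both iterated integrals, once fully expanded, integrate the same function of the argument of $f$ and $\vp$ against a single symmetric measure on the incidence-type configuration space. The key observation is that when $\t\perp\z$ and they intersect, the sum $\xi+\eta$ is a $(j+k)$-dimensional subspace, and the relevant translation parameters $u\in\eta$ (the component from the $\z$-fiber restricted to $\eta$) and $v\in\xi$ combine so that $f$ is evaluated at $\xi+u+v$ and $\vp$ at $\eta+u+v$ over a common domain. The strategy is therefore to rewrite both sides as one multiple integral over the tuple $(\xi,\eta,u,v)$ ranging over $\{\xi\in\gnj,\ \eta\in G_k(\xi^\perp),\ u\in\eta,\ v\in\xi\}$ and to check that the two orderings of integration produce identical integrands and identical measures. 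Here I would use the standard coarea/disintegration fact that integrating over $\gnk$ followed by the fiber $G_j(\eta^\perp)$ yields the same measure on pairs $(\xi,\eta)$ of mutually orthogonal subspaces as integrating over $\gnj$ followed by $G_k(\xi^\perp)$; this is the symmetry of the measure on the flag-type manifold of orthogonal $(\xi,\eta)$ pairs.

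The main obstacle will be justifying the interchange of the order of integration rigorously under the stated hypothesis that merely one of the two integrals converges in the Lebesgue sense. When the integrand is non-negative this is immediate from Tonelli's theorem, so I would first treat $f\ge 0$, $\vp\ge 0$ and establish the identity there; the finiteness of one side then forces finiteness of all the intermediate iterated integrals. For signed $f$ and $\vp$, I would apply the non-negative case to $|f|$ and $|\vp|$ to conclude absolute convergence of the full multiple integral, and then Fubini's theorem legitimizes all rearrangements, giving (\ref{dr}) in general. The only genuinely delicate point is the measure-matching identity for the orthogonal pairs $(\xi,\eta)$; I expect this to follow from the $O(n)$-invariance of the product measure $d_{\eta^\perp}\xi\,d\eta$ on $\gnk$ and the uniqueness of the invariant measure on the homogeneous space of orthogonal frames, but writing this cleanly—rather than the Fubini bookkeeping—is where the real work lies.
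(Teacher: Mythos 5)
Your proposal is correct in substance and is really the paper's argument organized differently. The paper introduces the manifestly symmetric quantity $I=\int_{M(n)}f(g\bbe_j)\,\vp(g\bbe_k)\,dg=\int_{O(n)}\int_{\rn}f(a+\rho\bbe_j)\,\vp(a+\rho\bbe_k)\,da\,d\rho$ and shows $I=I_l=I_r$ by averaging over the stabilizer $O_j$ of $\bbe_j$ and splitting the translation $a=t+b$, $t\in\bbe_j$, $b\in\bbe_j^\perp$; this is exactly the "measure-matching identity for orthogonal pairs $(\xi,\eta)$" that you correctly flag as the real content, and the stabilizer computation is the clean way to carry it out (rather than appealing abstractly to uniqueness of the invariant measure on $O(n)/(O(j)\times O(k)\times O(n-j-k))$, which also works but needs the normalizations checked). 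Your Tonelli-first reduction to $f\ge 0$, $\vp\ge 0$ matches how the hypothesis "either side exists in the Lebesgue sense" is meant to be used. One bookkeeping slip to fix: your configuration space $\{\xi\in\gnj,\ \eta\in G_k(\xi^\perp),\ u\in\eta,\ v\in\xi\}$ has translation part of dimension only $j+k<n$; in (\ref{rpq}) the shift $v$ ranges over all of $\eta^\perp$, not just over $\xi$, so the common parametrization must include the $(n-j-k)$-dimensional component in $\xi^\perp\cap\eta^\perp$, i.e.\ the translation variable is a full point $a\in\rn=\xi\oplus\eta\oplus(\xi^\perp\cap\eta^\perp)$, decomposed as $\eta\oplus\eta^\perp$ on one side and $\xi\oplus\xi^\perp$ on the other, with integrand $f(\xi+a)\,\vp(\eta+a)$ in both cases. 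With that correction the two expansions do coincide and Fubini finishes the proof.
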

\begin{proof}
 Let $I_r$ and $I_l$  denote the right-hand side and the left-hand side of (\ref{dr}), respectively;
\be\label{ksar} \bbe_{j}=\bbr e_1 \oplus \cdots \oplus\bbr e_{j},\qquad \bbe_{k}=\bbr e_{n-k+1} \oplus \cdots \oplus\bbr e_{n}.\ee
We set
\bea
I&=&\intl_{M(n)}f(g \bbe_j)\,\vp(g \bbe_{k})\,dg\nonumber\\
&=&\intl_{O(n)}d\rho\intl_{\rn}f(a+\rho \bbe_j)\,\vp(a+\rho \bbe_{k})\,da\,.
\eea
It suffices to show that $I=I_r=I_l$.
 Let $O_j$ and $O_k$  be  the stationary subgroups of $O(n)$   at  $\bbe_j$ and $\bbe_k$, respectively.
 We replace $\rho$ by $\rho\rho_j$  ($\rho_j\in O_j$), then integrate in $\rho_j\in O_j$, and, after that, replace $a$ by $\rho a$. Changing the order of integration and taking into account that $\rho_j \bbe_j=\bbe_j$, we obtain
\be\label {kalke}
I=\intl_{O(n)}d\rho\intl_{O_j}d\rho_j\intl_{\rn}f(\rho(a+\bbe_j))\,\vp(\rho(a+\rho_j \bbe_{k}))\,da.
\ee
Then we set $a=t+b$, where $t\in \bbe_j$,  $b\in \bbe_j^{\perp}$. Because $t+\bbe_j=\bbe_j$, (\ref{kalke}) gives
\bea
I&=&\intl_{O(n)}d\rho\intl_{\bbe_j^{\perp} }f(\rho(b+\bbe_j))db\intl_{\bbe_j}dt\intl_{O_j}\vp(\rho b+\rho (t+\rho_j \bbe_{k}))\,d\rho_j\nonumber\\
&=&\intl_{O(n)}d\rho\intl_{\bbe_j^{\perp}}f(\rho(b+\bbe_j))\,(\rkj\vp)(\rho (b+ \bbe_j))\,db=I_r\,.\nonumber
\eea
The proof of $I=I_l$ is similar.
\end{proof}

\section{Mixed Radon Transforms of Radial Functions}
We recall that a function $f$ on $\agnj$  is  radial, if there is a function $f_0$ on $\bbr_+$, such that  $f(\t)=f_0(|\t|)$.
 If $f$ is radial, then, by  (\ref{rpq}),
 \bea\label{rc}
 (\rjk f)(\eta,v)=\intl_{G_k(\eta^{\perp})}d_{\eta^{\perp}}\xi\intl_{\eta} f_0(|u+\Pr_{\xi^{\perp}}v|)\,d_\eta u,
 \eea
where $\Pr_{\xi^{\perp}}v $ denotes the orthogonal projection of $v$ onto $\xi^{\perp}$.
 The right-hand side of this equality is the $k$-plane transform of a radial function restricted to the $(n-j)$-dimensional subspace $\xi^{\perp}$ combined with the dual $j$-plane transform restricted to the $(n-k)$-dimensional space $\eta^{\perp}$.

\begin{lemma}\label{pr}
 If $f(\t)\equiv f_0(|\t|)$ satisfies the conditions
\be\label{pf4e}
\intl_0^a |f_0 (t)|\, t^{n-j-1}dt <\infty \quad \text{and} \quad \intl_a^\infty |f_0 (t)|\, t^{k-1}dt <\infty\ee
for some $a>0$, then
\be\label{pffrr} (\rjk f)(\z)=(I_{j,k} f_0)(|\z|),\ee where
\bea
(I_{j,k} f_0)(s)\!\!&=&\!\!\frac{c_1}{s^{n-k-2}}\intl_0^s(s^2\!-\!r^2)^{j/2-1} r^{\ell-1}dr\!\intl_r^{\infty}\!f_0(t)(t^2\!-\!r^2)^{k/2-1} t dt\nonumber\\
\label{pffrrt} &=&\frac{\tilde c_1}{s^{n-k-2}} (I^{j/2}_{+, 2} \, r^{\ell-2} I^{k/2}_{-, 2} f_0)(s), \quad l=n-j-k\ge 1, \eea
\be\label{pjjj4e} c_1=\frac{\sig_{j-1}\sig_{k-1}\sig_{\ell-1}}{\sig_{n-k-1}}, \qquad \tilde c_1=\frac{\pi^{k/2}\, \Gam ((n-k)/2)}{ \Gam (\ell/2)}.\ee
Moreover,
\be\label{puor} \intl_\a^\b |(I_{j,k} f_0)(s)|\,ds <\infty\quad \text{for all}\quad 0<\a<\b<\infty.\ee
\end{lemma}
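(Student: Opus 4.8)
The plan is to evaluate the iterated integral (\ref{rc}) by carrying out the two integrations separately, each being a radial Radon-type transform already computed in the preliminaries. Write $\z=\z(\eta,v)$, so that $|\z|=|v|$, and keep $\eta\in\gnk$ fixed. The inner integral over $\eta$ is a $k$-plane transform of the radial function $f_0(|\cdot|)$, while the outer integration over $\xi\in G_j(\eta^\perp)$ is a dual $j$-plane transform inside the $(n-k)$-dimensional space $\eta^\perp$. Applying the radial formula (\ref{ppaawsdz}) to the first, and the radial formula (\ref{ppaawsdz1}) to the second, produces the composition of Erd\'elyi--Kober integrals in (\ref{pffrrt}); the explicit double-integral form (\ref{pffrr}) then follows by writing out both operators and collecting the surface-area constants.

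\emph{Inner integral.} Since $\xi\subset\eta^\perp$ we have $\eta\subset\xi^\perp$, and because $v\in\eta^\perp$ its projection $w:=\Pr_{\xi^\perp}v$ again lies in $\eta^\perp$; hence $w\perp\eta$, so $u+w$ is an orthogonal decomposition for every $u\in\eta$ and $|u+w|^2=|u|^2+|w|^2$. Thus $\int_\eta f_0(|u+\Pr_{\xi^\perp}v|)\,d_\eta u$ is exactly the $k$-plane transform of $f_0(|\cdot|)$ over the $k$-plane $\eta+w$, whose distance to the origin equals $|w|=|\Pr_{\xi^\perp}v|$. By (\ref{ppaawsdz}) it equals $\pi^{k/2}(I^{k/2}_{-,2}f_0)(|\Pr_{\xi^\perp}v|)$.

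\emph{Outer integral and constants.} The remaining integral $\int_{G_j(\eta^\perp)}\pi^{k/2}(I^{k/2}_{-,2}f_0)(|\Pr_{\xi^\perp}v|)\,d_{\eta^\perp}\xi$ is, by definition, the dual $j$-plane transform in $\eta^\perp\cong\bbr^{n-k}$ of the radial function $\vp_0:=\pi^{k/2}I^{k/2}_{-,2}f_0$, evaluated at $v$ (note that $|\Pr_{\xi^\perp}v|$ is the distance from the origin to the $j$-plane $\xi+v$ lying in $\eta^\perp$). Applying (\ref{ppaawsdz1}) with $n$ replaced by $n-k$ and $k$ replaced by $j$, and using $(n-k)-j-2=\ell-2$ together with $(n-k-j)/2=\ell/2$, gives $\frac{\Gam((n-k)/2)}{\Gam(\ell/2)}\,|v|^{2-(n-k)}(I^{j/2}_{+,2}s^{\ell-2}\vp_0)(|v|)$, which is precisely (\ref{pffrrt}) with $\tilde c_1$ as stated. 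Passing to the explicit kernels and using $\sig_{m-1}=2\pi^{m/2}/\Gam(m/2)$ with $j+k+\ell=n$, one checks that the resulting constant equals $c_1=\sig_{j-1}\sig_{k-1}\sig_{\ell-1}/\sig_{n-k-1}$.

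\emph{Estimates and (\ref{puor}).} All of the above interchanges, and the applicability of (\ref{ppaawsdz}) and (\ref{ppaawsdz1}) via Lemma \ref{lifa2}, are justified once we show that the double integral in (\ref{pffrr}) with $f_0$ replaced by $|f_0|$ is locally integrable in $s$, which is also the asserted (\ref{puor}). Fix $0<\alpha<\beta<\infty$. On $[\alpha,\beta]$ the factor $s^{-(n-k-2)}$ is bounded, and $K(r):=\int_{\max(\alpha,r)}^{\beta}(s^2-r^2)^{j/2-1}\,ds$ is bounded uniformly in $r\in(0,\beta)$ (the only singularity, at $s=r$, is integrable since $j\ge 1$). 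By Tonelli's theorem we integrate in $s$ first and then interchange the $r$- and $t$-integrations, reducing matters to finiteness of $\int_0^\infty|f_0(t)|\,t\,L(t)\,dt$, where $L(t)=\int_0^{\min(t,\beta)}r^{\ell-1}(t^2-r^2)^{k/2-1}\,dr$. For $t\le\beta$ the substitution $r=t\rho$ gives $L(t)=c\,t^{\ell+k-2}=c\,t^{n-j-2}$, so $t\,L(t)=c\,t^{n-j-1}$ and the contribution is controlled by the first condition in (\ref{pf4e}) near the origin and by the second one on $[a,\beta]$; for $t>\beta$ one has $L(t)\le c\,t^{k-2}$, so $t\,L(t)\le c\,t^{k-1}$ and the tail is controlled by the second condition in (\ref{pf4e}). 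Hence the triple integral is finite, which proves (\ref{puor}) and legitimizes every interchange used above. The main point to watch is exactly this last estimate: the exponents $n-j-1$ and $k-1$ in (\ref{pf4e}) are precisely those produced by the Beta-integral evaluation of $L(t)$ for small and large $t$, so the two hypotheses are the natural (and, for this argument, sharp) ones.
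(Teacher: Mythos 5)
Your proof is correct, but it takes a genuinely different route from the paper's. The paper proves Lemma \ref{pr} by a self-contained computation: it moves $\z$ to the standard position by an orthogonal change of variables, passes to polar coordinates in the $\bbe_k$-variable, converts the integral over $G_j(\bbr^{n-k})$ into an average over $O(n-k)$ and then over $\bbs^{n-k-1}$, and evaluates the resulting spherical integral in bi-spherical coordinates; this produces the double-integral form with the constant $c_1$ directly, the Erd\'elyi--Kober form (\ref{pffrrt}) being read off afterwards, while (\ref{puor}) is proved separately in the Appendix by bounding $(s^2-r^2)^{j/2-1}$ by a multiple of $(s-r)^{j/2-1}$ and splitting the iterated integral. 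You instead exploit the factorization that the paper only mentions in passing after (\ref{rc}) (and uses later for the inversion procedure): the inner integral is the $k$-plane transform of a radial function and the outer one is the dual $j$-plane transform inside $\eta^\perp\cong\bbr^{n-k}$, so the two radial formulas (\ref{ppaawsdz}) and (\ref{ppaawsdz1}) give (\ref{pffrrt}) at once, with (\ref{pffrr}) and $c_1$ following by unwinding the kernels; your Tonelli argument for (\ref{puor}) is likewise different from the Appendix and arguably more transparent, since the Beta-integral evaluation of $L(t)$ produces exactly the exponents $n-j-1$ and $k-1$ appearing in (\ref{pf4e}). What your route buys is brevity and a conceptual explanation of why the composition $I^{j/2}_{+,2}\,r^{\ell-2}\,I^{k/2}_{-,2}$ appears; what the paper's route buys is independence from the radial lemmas and an explicit bi-spherical computation that is reused elsewhere. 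The only step deserving one extra line in your write-up is the tail bound $L(t)\le c\,t^{k-2}$ for $t>\beta$ when $k=1$: there the kernel $(t^2-r^2)^{-1/2}$ is not pointwise bounded by a multiple of $t^{-1}$ near $t=\beta$, but the integrated bound still holds because
\begin{equation*}
\intl_0^\beta (t-r)^{-1/2}\,dr=2\bigl(t^{1/2}-(t-\beta)^{1/2}\bigr)\le 2\beta\, t^{-1/2},
\end{equation*}
which, combined with $(t+r)^{-1/2}\le t^{-1/2}$, restores the required factor $t^{-1}$.
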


\begin{proof}   It is straightforward  to show that the assumptions in (\ref{pf4e}) imply (\ref {puor}); see Appendix.   Then  $(I_{j,k} f_0)(s)<\infty $ for almost all $s>0$,  and therefore the  proof of (\ref{pffrr})  presented below is well-justified.

We transform the integral
\[
(\rjk f)(\z)\equiv (\rjk f)(\eta,v)=\intl_{G_j(\eta^{\perp})}\,d_{\eta^{\perp}} \xi\intl_{\eta}f(\xi+u+v)\,d_\eta u
\]
by changing variable  $\xi=\gamma \xi'$,  where $\gamma\in O(n)$ is an orthogonal transformation satisfying $\gamma \bbe_{k}=\eta$. Then, by (\ref{oawe1}),
\[\xi' \subset \bbe_k^\perp= \bbr^{n-k} =\bbr e_1 \oplus \cdots \oplus \bbr e_{n-k},\]
 so that  $\xi'\in G_{n-k,j}=G_j (\bbr^{n-k})$. We also set
$u=\gamma u'$, $v=\gamma v'$, where $u'\in \bbe_k$, $v' \in \bbe_k^\perp=\bbr^{n-k})$. This gives
\bea\label{hia}
(\rjk f)(\eta,v)&=&\intl_{G_j (\bbr^{n-k})} d\xi' \intl_{\bbe_{k}} (f\circ \gamma)(\xi'+u'+v')\,du'\\
&=&\intl_{G_j (\bbr^{n-k})} d\xi' \intl_{\bbe_{k}} f_0(|\xi'+u'+v'|)\,du'\nonumber\\
&=&\intl_{G_j (\bbr^{n-k})} d\xi' \intl_{\bbe_{k}} f_0  \left (\sqrt{|u'|^2 +  |\Pr_{\xi'^{\perp}}v'|^2}\,\right )\, du',\nonumber\\
&=&\sig_{k-1}\intl_{0}^\infty r^{k-1}\,dr \intl_{G_j (\bbr^{n-k})} f_0 \left (\sqrt{r^2 +  |\Pr_{\xi'^{\perp}}v'|^2}\,\right )\,d\xi' ,\nonumber\eea
where $\Pr_{\xi'^{\perp}}v'$ denotes the orthogonal projection of $v'$ onto $\xi'^{\perp}$. We set $\xi'=\a \bbe_j$, $\a\in O(n-k)$. Then
(cf. (\ref{oawe1}))
\[\xi'^{\perp} =\a\bbe_j^\perp, \qquad \bbe_j^\perp =\bbr^{n-j}=\bbr e_{j+1} \oplus \cdots \oplus \bbr e_{n},\]
and therefore
\[(\rjk f)(\eta,v)=\sig_{k-1}\intl_{0}^\infty r^{k-1}\,dr \intl_{O(n-k)} f_0 \left (\sqrt{r^2 +  |\Pr_{\a \bbr^{n-j}}v'|^2}\,\right )\,d\a.\]
Because  $v' \in \bbr^{n-k}=\bbr e_1 \oplus \cdots \oplus \bbr e_{n-k}$, we have
\[
\Pr_{\a \bbr^{n-j}}v' =\Pr_{\a \bbr^{n-j} \cap \,\bbr^{n-k} }v'=\Pr_{\a (\bbr^{n-j} \cap \,\bbr^{n-k}) }v'=\Pr_{\a\bbe_\ell} v', \]
where
\[ \bbe_\ell= \bbr e_{j+1} \oplus \cdots \oplus \bbr e_{n-k}, \qquad \ell=n-k-j.\]
Thus
\[(\rjk f)(\eta,v)=\sig_{k-1}\intl_{0}^\infty r^{k-1}\,dr \intl_{O(n-k)} f_0 \left (\sqrt{r^2 +  |\Pr_{\a\bbe_\ell}v'|^2}\,\right )\,d\a.\]
Keeping in mind that $|\Pr_{\a\bbe_\ell}v'|=|\Pr_{\bbe_\ell} \a^{-1}v'|$ and setting
 $s=|v|$, we can write the last expression  as
\[(\rjk f)(\eta,v)=\frac{\sig_{k-1}}{\sig_{n-k-1}}\intl_{0}^\infty r^{k-1}dr\intl_{\bbs^{n-k-1}}f_0(\sqrt{r^2+s^2|\Pr_{\bbe_\ell}\th|^2})\, d\th,\]
where $\bbs^{n-k-1}$ is the unit sphere in $\bbr^{n-k}$. The  inner integral can be transformed by making use of
  the bi-spherical coordinates
 \[\th=a\cos\psi+b\sin\psi, \quad a\in \bbs^{n-1}\cap \bbe_\ell,\quad b\in \bbs^{n-1}\cap \bbe_j, \quad 0<\psi<\pi/2,\] \[d\th=\sin^{j-1}\psi\cos^{\ell-1}\psi\,  da\,db\,d\psi,\]
 see, e.g., \cite[p. 31]{Ru15}.   Setting $c_1=\sig_{k-1}\sig_{\ell-1}\sig_{j-1}/\sig_{n-k-1}$, we obtain
\bea
&&\!\!\!\!\!\!\!\!\!\!(\rjk f)(\eta,v)=c_1\intl_0^\infty r^{k-1}dr\intl_0^{\pi/2}f_0\left(\sqrt{r^2+s^2\cos^2\psi}\right)\,\sin^{j-1}\psi \,\cos^{\ell-1}\psi\, d\psi\qquad \nonumber\\
&&=c_1\intl_0^\infty r^{k-1}dr\intl_0^{1}f_0\left(\sqrt{r^2+s^2\lam^2}\right)\,(1-\lam^2)^{j/2-1}\,\lam^{\ell-1}\,d\lam\nonumber\\
&&=\frac{c_1}{s^{n-k-2}}\intl_0^\infty r^{k-1}dr\intl_0^{s}f_0\left(\sqrt{r^2+t^2}\right)\,(s^2-t^2)^{j/2-1}\,t^{\ell-1}\,dt\nonumber\\
&&=\frac{c_1}{s^{n-k-2}}\intl_0^s(s^2-r^2)^{j/2-1} r^{\ell-1}dr\intl_r^{\infty}f_0(t)\,(t^2-r^2)^{k/2-1}\,t \,dt\,.\nonumber
\eea
\end{proof}

The following  analogue of Lemma \ref{pr} for the dual transform $\rkj \vp$ follows from Lemma \ref{pr} by the  symmetry.

\begin{lemma}\label{pr22}
 If $\vp(\z)\equiv \vp_0(|\z|)$ satisfies the conditions
\be\label{pf4ez}
\intl_0^a |\vp_0 (s)|\, s^{n-k-1}ds <\infty \quad \text{and} \quad \intl_a^\infty |\vp_0 (s)|\, s^{j-1}ds <\infty\ee
for some $a>0$, then
\be\label{pffrr1} (\rkj \vp)(\t)=(I^*_{j,k}\vp_0)(|\t|),\ee
 where
\bea
(I^*_{j,k}\vp_0)(t)&=&\frac{c_2}{t^{n-j-2}}\intl_0^t(t^2\!-\!r^2)^{k/2-1} r^{\ell-1}dr\!\intl_r^{\infty}\!\vp_0(s)(s^2\!-\!r^2)^{j/2-1} s \, ds\nonumber\\
\label{pffrr1t} &=&\frac{\tilde c_2}{t^{n-j-2}} (I^{k/2}_{+, 2} \, r^{\ell-2} I^{j/2}_{-, 2} \vp_0)(t), \quad l=n-j-k\ge 1,\quad \eea
\be\label{pjjj4e} c_2=\frac{\sig_{j-1}\sig_{k-1}\sig_{\ell-1}}{\sig_{n-j-1}}, \qquad \tilde c_2=\frac{\pi^{j/2}\, \Gam ((n-j)/2)}{ \Gam (\ell/2)}.\ee
Moreover,
\be\label{puor1} \intl_\a^\b |(I^*_{j,k}\vp_0)(t)|\,dt <\infty\quad \text{for all}\quad 0<\a<\b<\infty.\ee
\end{lemma}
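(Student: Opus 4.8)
The plan is to derive Lemma~\ref{pr22} from Lemma~\ref{pr} by exploiting the structural symmetry between the operators $\rjk$ and $\rkj$. Observe that the defining formulas (\ref{rpq}) and (\ref{rpqd}) are obtained from one another by interchanging the roles of $j$ and $k$, of $\eta$ and $\xi$, and of the base/fiber directions $\bbe_k$ and $\bbe_j$. Concretely, the dual transform $\rkj$ acting on a radial $\vp$ averages $\vp$ over all $k$-planes $\eta$ orthogonal to the fixed $j$-plane $\xi$, while integrating over the fiber variable $v\in\xi$. This is precisely the computation carried out in the proof of Lemma~\ref{pr}, but with the integers $j$ and $k$ swapped everywhere. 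Thus the cleanest route is to repeat verbatim the reduction performed in Lemma~\ref{pr}, tracking the interchange $j\leftrightarrow k$.

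First I would set $t=|\t|$ and rewrite $(\rkj\vp)(\xi,u)$ using the change of variable that rotates $\xi$ to the coordinate plane $\bbe_j$, reducing the inner Grassmannian integral $\int_{G_k(\xi^\perp)}$ to an integral over $G_k(\bbr^{n-j})$ and then over $O(n-j)$. The orthogonal projection $\Pr_{\eta^\perp}u$ collapses, exactly as in (\ref{hia}), to a projection onto the fixed subspace $\bbe_\ell$ with $\ell=n-j-k$, since $\bbr^{n-j}\cap\bbr^{n-k}=\bbe_\ell$ is symmetric in $j$ and $k$. Passing to bi-spherical coordinates on $\bbs^{n-j-1}$ (now splitting $\bbe_\ell\oplus\bbe_k$ instead of $\bbe_\ell\oplus\bbe_j$), the angular Jacobian becomes $\sin^{k-1}\psi\,\cos^{\ell-1}\psi$, and the radial integral over the fiber $\xi\cong\bbr^j$ contributes $\sig_{j-1}\int_0^\infty r^{j-1}\,dr$. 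Collecting the sphere-area constants gives $c_2=\sig_{j-1}\sig_{k-1}\sig_{\ell-1}/\sig_{n-j-1}$, with the normalizing prefactor $t^{-(n-j-2)}$ replacing $s^{-(n-k-2)}$. The resulting double integral is then recognized, after the substitution converting $\cos^2\psi$ to a new radial variable, as the Erd\'elyi--Kober composition $\tilde c_2\,t^{-(n-j-2)}(I^{k/2}_{+,2}\,r^{\ell-2}I^{j/2}_{-,2}\vp_0)(t)$, which is (\ref{pffrr1t}).

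The integrability claim (\ref{puor1}) and the justification that the manipulations are legitimate follow the same template as in Lemma~\ref{pr}: one shows that the two hypotheses in (\ref{pf4ez})---local integrability of $\vp_0(s)s^{n-k-1}$ near $0$ and integrability of $\vp_0(s)s^{j-1}$ at infinity---guarantee (\ref{puor1}), with the roles of the exponents governed by the $j\leftrightarrow k$ swap. The near-origin condition controls the $I^{j/2}_{-,2}$ inner integral (compare Lemma~\ref{lifa2}(ii) with the tail bound (\ref{for10z})), while the condition at infinity secures finiteness of the outer Erd\'elyi--Kober integral and hence local integrability of the composition; this is relegated to the Appendix exactly as in the proof of Lemma~\ref{pr}. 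The only genuine care needed is bookkeeping of which exponent pairs with which fractional-integral operator after the swap: in $\rkj$ the $k$-dimensional averaging over $\eta$ produces the $I^{k/2}_{+,2}$ factor and the $j$-dimensional fiber integration produces the $I^{j/2}_{-,2}$ factor, the mirror image of Lemma~\ref{pr}.

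The main obstacle, such as it is, is not analytic but organizational: one must verify that every step of the proof of Lemma~\ref{pr} is symmetric under $j\leftrightarrow k$ and that no step secretly used an asymmetry (for instance, the definition of $d\z$ versus $d\t$, or the specific choice of coordinate planes in (\ref{ksar})). Since the incidence set $\frI$ and the measures $d\t=d\xi\,du$, $d\z=d\eta\,dv$ are manifestly symmetric, and the duality Lemma~\ref{drl} already encodes this interchange, the symmetry is genuine, and the lemma follows without any new computation beyond relabeling indices.
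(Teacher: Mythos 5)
Your proposal is correct and coincides with the paper's own treatment: the paper gives no separate proof of Lemma~\ref{pr22}, stating only that it ``follows from Lemma~\ref{pr} by the symmetry,'' which is exactly the $j\leftrightarrow k$ relabeling you carry out. Your bookkeeping of the exponents, the constants $c_2$, $\tilde c_2$, and the pairing of the Grassmannian average with $I^{k/2}_{+,2}$ and the $j$-dimensional plane integration with $I^{j/2}_{-,2}$ is accurate.
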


\begin{remark} By Lemma \ref{lifa2} (ii), the finiteness of the second integrals in (\ref{pf4e}) and (\ref{pf4ez}) is necessary for the existence of the corresponding integrals $(I_{j,k} f_0)(s)$ and $(I^*_{j,k}\vp_0)(t)$.
\end{remark}

\begin{example}\label{er} The following formulas can be easily obtained from (\ref{pffrrt}) and (\ref{pffrr1t}) using tables of integrals (see, e.g., \cite{GRy}):

\vskip 0.2 truecm

\noindent {\rm (i)} If $f(\t)\!=\! |\t|^{-\lam}$, $\;k<  \lam < n-j$, then $(\rjk f)(\z)\!=\!c_{j,k} \,|\z|^{k-\lam}$, where
\be\label{prrty1}  c_{j,k}=
\frac{\pi^{k/2}\, \Gam \left (\frac{n-k}{2}\right)\, \Gam \left (\frac{\lam-k}{2}\right)\,\Gam \left (\frac{n-j-\lam}{2}\right)}
{ \Gam \left (\frac{n-j-k}{2}\right)\, \Gam \left (\frac{\lam}{2}\right)\, \Gam \left (\frac{n-\lam}{2}\right)}.\ee

\noindent {\rm (ii)} If $\vp(\z)\!=\! |\z|^{-\lam}$, $\;j< \lam < n-k$,  then  $(\rkj \vp)(\t)\!=\!c_{k,j} \,|\t|^{j-\lam}$, where
\be\label{prrty2} c_{k,j}=
\frac{\pi^{j/2}\, \Gam \left (\frac{n-j}{2}\right)\, \Gam \left (\frac{\lam-j}{2}\right)\,\Gam \left (\frac{n-k-\lam}{2}\right)}
{ \Gam \left (\frac{n-j-k}{2}\right)\, \Gam \left (\frac{\lam}{2}\right)\, \Gam \left (\frac{n-\lam}{2}\right)}.\ee

\noindent {\rm (iii)} If $f(\t)\!=\! (1+|\t|^2)^{-n/2}$, then $(\rjk f)(\z)\!=\!c_{k} \,(1+|\z|^2)^{(j+k-n)/2}$, where
\be\label{prrty3} c_{k}=
\frac{\pi^{k/2}\, \Gam \left (\frac{n-k}{2}\right)}
{ \Gam \left (\frac{n}{2}\right)}.\ee

\noindent {\rm (iv)} If $\vp(\z)\!=\! (1+|\z|^2)^{-n/2}$, then $(\rkj \vp)(\t)\!=\!c_{j} \,(1+|\t|^2)^{(j+k-n)/2}$, where
\be\label{prrty4} c_{j}=
\frac{\pi^{j/2}\, \Gam \left (\frac{n-j}{2}\right)}
{ \Gam \left (\frac{n}{2}\right)}.\ee

\end{example}

\section{Existence of the Mixed Radon Transforms}

 Example \ref{er} in conjunction with  duality  (\ref{dr}) gives information about the  existence in the Lebesgue sense of the corresponding Radon transforms $\rjk f$ and $\rkj \vp$.

\begin{theorem}\label{ODUL}  The following formulas hold provided that the integral in either side of the corresponding equality exists in the Lebesgue sense.
\be\label{dra1}
\intl_{\agnk}\frac{(\rjk f)(\z)}{|\z|^{\lam}} \,d\z=c_{k,j} \intl_{\agnj}\frac{f(\t)}{|\t|^{\lam -j}}\,d\t, \quad j<\lam < n-k;
\ee
\be\label{dra2}
\intl_{\agnk}\frac{(\rjk f)(\z)}{(1+|\z|^2)^{n/2}}\,d\z=c_{j} \intl_{\agnj}\frac{f(\t)}{(1+|\t|^2)^{(n-j-k)/2}}\,d\t;
\ee
\be\label{dra3}\intl_{\agnj} \frac{(\rkj\vp)(\t)}{|\t|^{\lam}}\, d\t=c_{j,k} \intl_{\agnk}\frac{\vp(\z)}{|\z|^{\lam -k}}\,d\z, \quad k< \lam < n-j;\ee
\be\label{dra4} \intl_{\agnj} \frac{(\rkj\vp)(\t)}{(1+|\t|^2)^{n/2}}\, d\t=c_{k} \intl_{\agnk}\frac{\vp(\z)}{(1+|\z|^2)^{(n-j-k)/2}}\,d\z.\ee
Here  $c_{k,j}, c_{j}, c_{j,k}$, and $c_{k}$ have the same meaning as in Example \ref{er}.
\end{theorem}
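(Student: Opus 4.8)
The plan is to derive all four formulas directly from the duality relation (\ref{dr}) of Lemma \ref{drl}, using the explicit computations of the mixed Radon transforms on the specific radial functions supplied by Example \ref{er}. The key observation is that each of the four identities pairs $\rjk f$ (or $\rkj\vp$) against a fixed \emph{radial} weight, so I can move that weight across the duality by recognizing it as the image of another radial weight under the \emph{dual} transform.

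Consider (\ref{dra1}). I would set $\vp(\z)=|\z|^{-\lam}$ as the test function paired against $\rjk f$ on the left. The left-hand side is then exactly $\int_{\agnk}(\rjk f)(\z)\,\vp(\z)\,d\z$, which by (\ref{dr}) equals $\int_{\agnj}f(\t)\,(\rkj\vp)(\t)\,d\t$. Now I invoke Example \ref{er}(ii): for $j<\lam<n-k$ we have $(\rkj\vp)(\t)=c_{k,j}\,|\t|^{j-\lam}$. Substituting gives precisely $c_{k,j}\int_{\agnj}f(\t)\,|\t|^{j-\lam}\,d\t$, which is the right-hand side of (\ref{dra1}). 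The range condition $j<\lam<n-k$ is exactly the one under which Example \ref{er}(ii) is valid, so it matches automatically. Formula (\ref{dra2}) follows the same way, pairing $\rjk f$ against $\vp(\z)=(1+|\z|^2)^{-n/2}$ and using Example \ref{er}(iv), which yields $(\rkj\vp)(\t)=c_j\,(1+|\t|^2)^{(j+k-n)/2}$; note $(j+k-n)/2=-(n-j-k)/2$, giving the stated exponent on the right.

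The remaining two identities are handled by the mirror-image argument. For (\ref{dra3}) I pair $\rkj\vp$ against the test function $f(\t)=|\t|^{-\lam}$ on the $\agnj$ side; applying (\ref{dr}) in the reverse direction moves the pairing to $\int_{\agnk}\vp(\z)\,(\rjk f)(\z)\,d\z$, and then Example \ref{er}(i) supplies $(\rjk f)(\z)=c_{j,k}\,|\z|^{k-\lam}$ under the condition $k<\lam<n-j$. Likewise (\ref{dra4}) uses $f(\t)=(1+|\t|^2)^{-n/2}$ together with Example \ref{er}(iii). In each case the constants $c_{k,j},c_j,c_{j,k},c_k$ carry over verbatim from Example \ref{er}, which is why the theorem can simply refer back to them.

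The only genuine subtlety, and the step I would flag as the main point to handle carefully, is the \emph{existence/justification} clause: the duality (\ref{dr}) is only asserted when one of the two integrals converges in the Lebesgue sense, and Example \ref{er} likewise presupposes that the relevant side converges. So the clean logical structure is to argue that whenever the integral on \emph{either} side of, say, (\ref{dra1}) exists, the corresponding pairing in (\ref{dr}) exists, and then the chain of equalities is valid with every intermediate quantity finite. Concretely, the finiteness of the right-hand integral $\int_{\agnj}|f(\t)|\,|\t|^{j-\lam}\,d\t$ guarantees (via Example \ref{er}(ii) applied to $|f|$, or via the nonnegative version of the computation) that $\int_{\agnj}|f(\t)|\,(\rkj\,|\cdot|^{-\lam})(\t)\,d\t<\infty$, which is exactly the hypothesis needed to run (\ref{dr}) and transfer the identity to the left-hand side; the reverse implication is symmetric. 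I would therefore state the computation first for nonnegative $f$ (where all integrals are unambiguous and the interchange is Tonelli), obtaining the identity with both sides simultaneously finite or infinite, and then extend to general $f$ by the standard decomposition into positive and negative parts once one side is known to converge absolutely. This makes the phrase ``provided that the integral in either side $\ldots$ exists in the Lebesgue sense'' rigorous rather than merely formal.
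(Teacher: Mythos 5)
Your proposal is correct and follows exactly the route the paper intends: pair $\rjk f$ (resp.\ $\rkj\vp$) against the radial test functions $|\cdot|^{-\lam}$ and $(1+|\cdot|^2)^{-n/2}$, apply the duality relation (\ref{dr}), and substitute the explicit values from Example \ref{er}. Your additional care with the existence clause (arguing first for nonnegative functions via Tonelli, then decomposing) is a sound way to make rigorous the paper's ``provided that either side exists'' proviso, which the paper itself leaves implicit.
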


The next theorems characterize the existence of the Radon transforms $\rjk f$ an $\rkj \vp$ in different terms.

\begin{theorem} \label {kwwt} Let  $\;j+k< n$. If $f$ is a locally integrable function on $\agnj$ satisfying
\be\label{pfqq4e}
\intl_{|\t|>a} |f(\t)|\, |\t|^{k+j-n}\, d\t <\infty \ee
for some $a>0$, then $(\rjk f)(\z)$ is finite for almost all $\z\in G(n,k)$.
If for a nonnegative, radial,  locally  integrable function $f$, the condition (\ref{pfqq4e}) fails, then
 $(\rjk f)(\z)=\infty $  for  all $\z\in G(n,k)$.
\end{theorem}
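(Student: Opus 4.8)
The plan is to treat the two assertions separately: for the finiteness statement I would use the duality relation of Lemma~\ref{drl} paired with the model computation in Example~\ref{er}(iv), and for the divergence statement I would use the radial representation of Lemma~\ref{pr} together with Lemma~\ref{lifa2}(ii).

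For the first assertion I would first replace $f$ by $|f|$ and reduce to $f\ge 0$, since $(\rjk f)(\z)$ exists in the Lebesgue sense exactly at those $\z$ where $(\rjk |f|)(\z)<\infty$. The key step is to pair $\rjk |f|$ against the strictly positive test function $\vp(\z)=(1+|\z|^2)^{-n/2}$ on $\agnk$. By Example~\ref{er}(iv) one has $(\rkj\vp)(\t)=c_j\,(1+|\t|^2)^{(j+k-n)/2}$, so the duality relation (\ref{dr}), which holds in $[0,\infty]$ for nonnegative integrands by Tonelli's theorem, yields
\be
\intl_{\agnk}(\rjk |f|)(\z)\,(1+|\z|^2)^{-n/2}\,d\z=c_j\intl_{\agnj}|f(\t)|\,(1+|\t|^2)^{(j+k-n)/2}\,d\t.
\ee
Because $j+k-n<0$, we have $(1+|\t|^2)^{(j+k-n)/2}\le |\t|^{j+k-n}$ and $(1+|\t|^2)^{(j+k-n)/2}\le 1$. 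Splitting the right-hand integral at $|\t|=a$ and noting that $\{|\t|\le a\}$ is compact in $\agnj$ (the base $\gnj$ is compact and the fibre coordinate ranges over a ball), the part over $|\t|\le a$ is finite by local integrability of $f$ and the part over $|\t|>a$ is finite by the hypothesis (\ref{pfqq4e}). Hence the left-hand side is finite, and since the weight $(1+|\z|^2)^{-n/2}$ is strictly positive, $(\rjk |f|)(\z)<\infty$ for almost every $\z$.

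For the divergence statement, let $f(\t)=f_0(|\t|)\ge 0$ be radial and locally integrable. Since $f_0\ge 0$, every change of variable and interchange of order of integration in the proof of Lemma~\ref{pr} is justified in $[0,\infty]$ by Tonelli's theorem, so the identity
\[
(\rjk f)(\z)=\frac{c_1}{s^{n-k-2}}\intl_0^s (s^2-r^2)^{j/2-1} r^{\ell-1}\,dr\intl_r^\infty f_0(t)(t^2-r^2)^{k/2-1}\,t\,dt, \qquad s=|\z|,
\]
holds with values in $[0,\infty]$ without any a priori convergence assumption. The crucial observation is that, after integrating over the fibre $\xi^\perp$, the hypothesis (\ref{pfqq4e}) for radial $f$ is equivalent to the one-dimensional tail condition $\intl_a^\infty f_0(t)\,t^{k-1}\,dt<\infty$, which is exactly (\ref{for10z}) with $\a=k/2$. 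If this fails it fails for every $a>0$ (the truncated part being finite by local integrability), so by Lemma~\ref{lifa2}(ii) the inner integral, which is proportional to $(I^{k/2}_{-,2}f_0)(r)$, equals $\infty$ for every $r>0$; feeding this into the positive outer integration gives $(\rjk f)(\z)=\infty$ for all $\z$.

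I expect the main technical point to be the bookkeeping that converts condition (\ref{pfqq4e}) on $\agnj$ into the scalar tail condition on $f_0$, together with the verification that the computation of Lemma~\ref{pr} survives as an identity in $[0,\infty]$ when convergence is dropped; these are the two places where compactness of $\{|\t|\le a\}$ and the Tonelli justification must be invoked explicitly. Once these are in place, the result follows directly from Lemmas~\ref{drl}, \ref{pr}, \ref{lifa2} and Example~\ref{er}(iv).
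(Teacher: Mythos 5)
Your proof is correct. The divergence half coincides with the paper's argument: both convert (\ref{pfqq4e}) for radial $f$ into the scalar tail condition $\int_a^\infty f_0(t)\,t^{k-1}\,dt$ and invoke Lemma \ref{lifa2}(ii); the only cosmetic difference is that the paper applies this to the inner integral $\int_\eta f(\xi+u+v)\,d_\eta u$ directly, rather than to the assembled radial formula of Lemma \ref{pr}. The finiteness half, however, takes a genuinely different route. The paper radializes: it sets $\tilde f(\t)=\int_{O(n)}|f(\gam\t)|\,d\gam$, verifies that local integrability and (\ref{pfqq4e}) translate into the one-dimensional conditions (\ref{pf4e}) for the profile $f_0$, and then uses Lemma \ref{pr} together with the Appendix estimate (\ref{puor}) to conclude that $\int_{\a<|\z|<\b}|(\rjk f)(\z)|\,d\z<\infty$ for all $0<\a<\b$. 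You instead pair $\rjk|f|$ with the strictly positive weight $(1+|\z|^2)^{-n/2}$, using the duality of Lemma \ref{drl} (valid in $[0,\infty]$ by Tonelli for nonnegative integrands) and Example \ref{er}(iv); this is precisely the identity (\ref{dra2}) of Theorem \ref{ODUL}, whose right-hand side you correctly bound by splitting at $|\t|=a$. The paper itself points out, in the remark following Corollary \ref{kcf2}, that (\ref{dra2}) recovers the sharp $L^p$ bounds, but it does not use this route to prove Theorem \ref{kwwt}. Your route bypasses the Appendix computation and yields a global weighted $L^1$ bound for $\rjk|f|$, whereas the paper's route yields local integrability of $\rjk f$ on annuli; neither is fully independent of the radial machinery, since Example \ref{er}(iv) rests on Lemma \ref{pr22}, the mirror image of Lemma \ref{pr}, so both proofs ultimately reduce to the same Erd\'elyi--Kober computation.
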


\begin{proof}  It suffices to show that
\be\label{autyre}  I_{\a,\b} \equiv \intl_{\a<|\z|<\b} |(\rjk f)(\z)|\, d\z <\infty \quad \text{\rm for all} \quad 0<\a<\b <\infty.\ee
Because $\rjk$ commutes with orthogonal transformations,
\[
 I_{\a,\b}\equiv \intl_{O(n)}d\gam \intl_{\a<|\z|<\b} |(\rjk f)(\gam\z)|\, d\z\le \intl_{\a<|\z|<\b} (\rjk \tilde f)(\z)\, d\z,\]
where $\tilde f(\t)=\int_{O(n)} |f(\gam\t)|\,d\gam$ is a radial function. We set $\tilde f(\t)=f_0(|\t|)$. The assumptions for $f$ in the lemma imply  (\ref{pf4e}) for $f_0$. Indeed,
\bea&&\intl_0^a f_0 (t)\, t^{n-j-1}\,dt= \intl_0^a  t^{n-j-1}dt\intl_{O(n)} |f(\gam (\bbe_j +te_{j+1}))|\,d\gam\nonumber\\
&&=\intl_0^a  t^{n-j-1}\,dt\intl_{O(n-j)}d\om \intl_{O(n)} |f(\gam (\bbe_j +t\om e_{j+1}))|\,d\gam\nonumber\\
&&=\frac{1}{\sig_{n-j-1}}\intl_0^a  t^{n-j-1}\,dt \intl_{O(n)} d\gam \intl_{\bbs^{n-j-1}}  |f(\gam (\bbe_j +t\sig))|\,d\sig\nonumber\\
&&=\frac{1}{\sig_{n-j-1}}\intl_{O(n)} d\gam \intl_{y\in \bbe_j^\perp,\, |y|<a}|f(\gam (\bbe_j +y))|\, dy\nonumber\\
&&=\frac{1}{\sig_{n-j-1}}\intl_{G_{n,j}} d\xi \intl_{u\in \xi^\perp,\; |u|<a}|f(\xi +u)|\, d_{\xi^\perp} u=\frac{1}{\sig_{n-j-1}}
\intl_{|\t|<a}|f(\t)|\, d\t<\infty,\nonumber\eea
because $f$ is  locally integrable. Similarly,
\[ \intl_a^\infty f_0 (t)\, t^{k-1}dt=\frac{1}{\sig_{n-j-1}}
\intl_{|\t|>a}|f(\t)|\, |\t|^{k+j-n}\,  d\t <\infty.\]
Hence, by (\ref{pffrr}) and (\ref{puor}),
\bea
 I_{\a,\b}&\le& \intl_{\a<|\z|<\b} (\rjk \tilde f)(\z)\, d\z=\intl_{\a<|\z|<\b} (I_{j,k} f_0)(|\z|)\, d\z\nonumber\\
 &=&\sig_{n-j-1} \intl_\a^\b (I_{j,k} f_0)(s)\,s^{n-j-1}\,ds \le c  \intl_\a^\b (I_{j,k} f_0)(s)\,ds<\infty,\nonumber\eea
as desired.

To complete the proof, suppose that (\ref{pfqq4e}) fails for some
 nonnegative, radial,  locally  integrable function $f$. If $f(\t)\equiv f_0(|\t|)$ then the inner integral in   (\ref{rpq}) becomes
 \bea
 I(\xi, \eta,v)&\equiv&\intl_{\eta}f(\xi+u+v)\,d_\eta u=\intl_{\eta} f_0\left (\sqrt {|u|^2 +|\Pr_{\xi^\perp} v|^2}\right )\,d_\eta u\nonumber\\
 &=&\sig_{k-1}\intl_s^\infty f_0(t) (t^2 -s^2)^{k/2 -1}\, tdt, \qquad s= |\Pr_{\xi^\perp} v|.\nonumber\eea
 The condition (\ref{pfqq4e}) is equivalent to
 \[\intl_a^\infty f_0(t) t^{k-1}\, dt=\infty \quad \text{\rm for all} \quad a>0.\]

 By Lemma \ref {lifa2}(ii), it follows  that $I(\xi, \eta,v)=\infty$ for all triples $(\xi, \eta,v)$, and therefore  $(\rjk f)(\z)=\infty $  for  all $\z\in G(n,k)$.
\end{proof}

The following statement is an analogue of Theorem \ref{kwwt} for the dual transform $\rkj \vp$ and holds by the symmetry.
\begin{theorem} \label {kwwt32}
Let  $\;j+k<  n$. If $\vp$ is a locally integrable function on $\agnk$ satisfying
\be\label{pfqq4e5}
\intl_{|\z|>a} |\vp(\z)|\, |\z|^{k+j-n}\, d\z <\infty \ee
for some $a>0$, then $(\rkj \vp)(\t)$ is finite for almost all $\t\in G(n,j)$.
If for a nonnegative, radial,  locally  integrable function $\vp$, the condition (\ref{pfqq4e5}) fails, then
 $(\rkj \vp)(\t)=\infty $  for  all $\t\in G(n,j)$.
\end{theorem}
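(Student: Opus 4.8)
The plan is to mirror the proof of Theorem \ref{kwwt} with the roles of $j$ and $k$ interchanged, replacing the use of Lemma \ref{pr} by its dual counterpart Lemma \ref{pr22}. First I would reduce the finiteness assertion to the radial case. It suffices to show that $J_{\a,\b}\equiv\intl_{\a<|\t|<\b}|(\rkj\vp)(\t)|\,d\t<\infty$ for all $0<\a<\b<\infty$. Since $\rkj$ commutes with orthogonal transformations, averaging over $O(n)$ gives $J_{\a,\b}\le\intl_{\a<|\t|<\b}(\rkj\tilde\vp)(\t)\,d\t$, where $\tilde\vp(\z)=\intl_{O(n)}|\vp(\gam\z)|\,d\gam\equiv\vp_0(|\z|)$ is radial.

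Next I would check that the hypotheses on $\vp$ force the two conditions in (\ref{pf4ez}) on $\vp_0$. Passing to polar coordinates in the $(n-k)$-dimensional fiber of $\agnk$, the local integrability of $\vp$ yields $\intl_0^a|\vp_0(s)|\,s^{n-k-1}\,ds=\sig_{n-k-1}^{-1}\intl_{|\z|<a}|\vp(\z)|\,d\z<\infty$, while the growth condition (\ref{pfqq4e5}) translates, after the same change of variables, into $\intl_a^\infty|\vp_0(s)|\,s^{j-1}\,ds=\sig_{n-k-1}^{-1}\intl_{|\z|>a}|\vp(\z)|\,|\z|^{k+j-n}\,d\z<\infty$; the exponent $j-1$ arises precisely because the volume weight $s^{n-k-1}$ combines with $|\z|^{k+j-n}$. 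With (\ref{pf4ez}) in hand, Lemma \ref{pr22} gives $(\rkj\tilde\vp)(\t)=(I^*_{j,k}\vp_0)(|\t|)$, and the local integrability estimate (\ref{puor1}) then bounds $J_{\a,\b}$ by a constant multiple of $\intl_\a^\b|(I^*_{j,k}\vp_0)(s)|\,ds<\infty$, as required.

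For the converse I would take $\vp$ nonnegative, radial, and locally integrable with (\ref{pfqq4e5}) failing. Writing $\vp=\vp_0(|\z|)$ and evaluating the inner integral over $v\in\xi$ in (\ref{rpqd}) exactly as in the proof of Theorem \ref{kwwt}, one uses $|\z|^2=|\Pr_{\eta^{\perp}}u|^2+|v|^2$ to obtain, for $s=|\Pr_{\eta^{\perp}}u|$, the Erd\'{e}lyi--Kober expression $\sig_{j-1}\intl_s^\infty\vp_0(t)\,(t^2-s^2)^{j/2-1}\,t\,dt$. The failure of (\ref{pfqq4e5}) is equivalent to $\intl_a^\infty\vp_0(t)\,t^{j-1}\,dt=\infty$ for every $a>0$, so Lemma \ref{lifa2}(ii) (with $\a=j/2$) forces this inner integral to be $+\infty$ for every admissible triple, whence $(\rkj\vp)(\t)\equiv\infty$.

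The argument is entirely parallel to that of Theorem \ref{kwwt}, and I expect no genuinely new obstacle. The only point requiring care is the bookkeeping of dimensions in the second paragraph: one must verify that the fiber dimension $n-k$ of $\agnk$ (rather than $n-j$) enters the polar-coordinate reduction, so that the weights $s^{n-k-1}$ and $|\z|^{k+j-n}$ combine to give exactly the exponent $j-1$ matching the second hypothesis of Lemma \ref{pr22}, and that the orthogonality $v\perp\Pr_{\eta^{\perp}}u$ used in the converse is justified by $\eta\perp\xi$. Once these are confirmed, the rotation covariance of $\rkj$ and the finiteness statement (\ref{puor1}) complete the proof.
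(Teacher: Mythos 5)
Your proposal is correct and is exactly the argument the paper intends: the paper gives no separate proof of Theorem \ref{kwwt32}, stating only that it "holds by the symmetry" with Theorem \ref{kwwt}, and what you have written is precisely that symmetric argument carried out in detail (with the correct fiber dimension $n-k$, the correct reduction to conditions (\ref{pf4ez}) of Lemma \ref{pr22}, and the correct use of Lemma \ref{lifa2}(ii) with $\a=j/2$ for the divergence part). The orthogonality $v\perp\Pr_{\eta^{\perp}}u$ you flag is indeed justified since $v\in\xi\subset\eta^{\perp}$ and $\Pr_{\eta^{\perp}}u\in\xi^{\perp}$, so no gap remains.
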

\begin{corollary} \label {kcf2} ${}$\hfill

\vskip 0.2 truecm

\noindent {\rm (i)} If $f\in L^p (\agnj)$, $1\le p <(n-j)/k$, then $(\rjk f)(\z)$ is finite for almost all $\z\in G(n,k)$.

\noindent {\rm (ii)} If $\vp \in L^q (\agnk)$, $1\le q <(n-k)/j$, then $(\rkj \vp)(\t)$ is finite for almost all $\t\in G(n,j)$.

\noindent The bounds  $p <(n-j)/k$ and $ q <(n-k)/j$ in these statements are sharp.
\end{corollary}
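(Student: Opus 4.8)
The plan is to derive the $L^p$ statement (i) from the local-integrability criterion in Theorem \ref{kwwt}, using Hölder's inequality to verify condition (\ref{pfqq4e}); statement (ii) then follows by the symmetry already invoked for Theorem \ref{kwwt32}, so I would only write out (i) in detail. First I would observe that an $L^p$ function on $\agnj$ is automatically locally integrable, since on any bounded region $\{|\t|<b\}$ the measure is finite and Hölder gives $\int_{|\t|<b}|f|\,d\t \le \|f\|_p \,(\text{meas})^{1/p'}<\infty$. Thus the hypotheses of Theorem \ref{kwwt} are met once (\ref{pfqq4e}) is checked, and the conclusion that $(\rjk f)(\z)$ is finite a.e.\ follows immediately.

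The core of the argument is to show that $f\in L^p(\agnj)$ with $p<(n-j)/k$ implies
\[
\intl_{|\t|>a}|f(\t)|\,|\t|^{k+j-n}\,d\t<\infty.
\]
I would apply Hölder's inequality with exponents $p$ and $p'=p/(p-1)$, separating the weight:
\[
\intl_{|\t|>a}|f(\t)|\,|\t|^{k+j-n}\,d\t
\le \|f\|_{L^p(\agnj)}\,\left(\intl_{|\t|>a}|\t|^{(k+j-n)p'}\,d\t\right)^{1/p'}.
\]
The remaining task is convergence of the weight integral. Using the fiber-bundle structure $d\t=d\xi\,du$ with $u\in\xi^\perp\cong\bbr^{n-j}$ and $|\t|=|u|$, the $\xi$-integral contributes the total mass $1$ of the probability measure on $\gnj$, and passing to polar coordinates in $\bbr^{n-j}$ yields
\[
\intl_{|\t|>a}|\t|^{(k+j-n)p'}\,d\t
=\sig_{n-j-1}\intl_a^\infty \rho^{(k+j-n)p'+n-j-1}\,d\rho.
\]
This converges exactly when $(k+j-n)p'+n-j-1<-1$, i.e.\ $(n-j-k)p'>n-j$, equivalently $p'>(n-j)/(n-j-k)$, which is the conjugate form of $p<(n-j)/k$. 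Hence (\ref{pfqq4e}) holds and (i) follows.

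For sharpness I would exhibit a radial counterexample at the endpoint $p=(n-j)/k$. The natural candidate is $f(\t)=|\t|^{-(n-j)/p}\,(\log(2+|\t|))^{-1}$ for $|\t|>1$ and $f=0$ otherwise; a direct polar-coordinate computation shows $f\in L^p(\agnj)$ precisely because the logarithmic factor makes $\int_1^\infty \rho^{-1}(\log(2+\rho))^{-p}\,d\rho$ finite while (\ref{pfqq4e}) fails, so that by the second part of Theorem \ref{kwwt} (with this nonnegative radial $f$) one gets $(\rjk f)\equiv\infty$. The main obstacle is bookkeeping the correct exponent in the counterexample so that $f$ lands in $L^p$ at the critical index yet violates the weighted integrability condition; the logarithmic correction is what threads this needle, and I would verify both integrals explicitly. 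Statement (ii) is obtained verbatim by interchanging the roles of $j$ and $k$ (and of $f,\z$ with $\vp,\t$) in Theorem \ref{kwwt32}, giving the sharp range $q<(n-k)/j$.
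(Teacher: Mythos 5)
Your proposal is correct and follows essentially the same route as the paper: reduce to the weighted integrability condition (\ref{pfqq4e}) via Theorem \ref{kwwt}, verify it by H\"older's inequality (the paper simply asserts the finiteness of the weight integral that you compute explicitly in polar coordinates), and establish sharpness with the same log-corrected power function, which the paper writes as $(2+|\t|)^{(j-n)/p}(\log(2+|\t|))^{-1}$ to avoid the truncation at $|\t|=1$. The only point worth tightening is that the counterexample should be noted to work for every $p\ge (n-j)/k$, not only at the endpoint, which your choice of exponent $-(n-j)/p$ in fact already does.
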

\begin{proof} (i) By Theorem \ref{kwwt}, it suffices to check (\ref{pfqq4e}). For any $a>0$, the H\"older inequality yields
\[\intl_{|\t|>a} |f(\t)|\, |\t|^{k+j-n}\, d\t \le ||f||_p \,\Bigg (\,\intl_{|\t|>a} |\t|^{(k+j-n)p'}\, d\t\Bigg )^{1/p'}, \quad \frac{1}{p} +\frac{1}{p'} =1.\]
Because the integral in bracket is finite whenever $1\le p <(n-j)/k$, the result follows. The proof of (ii) is similar. If  $p\ge (n-j)/k$,
the function
\be\label{lehget}
f(\t)=(2+|\t|)^{(j-n)/p}(\log(2+|\t|))^{-1}\ee
provides a counter-example. Indeed, this function belongs to
$L^p(\agnj)$ and does not obey (\ref{pfqq4e}). The case $ q\ge(n-k)/j$ is similar.
\end{proof}

\begin{remark} It is interesting to note that the same bounds for $p$ and $q$ can be obtained from (\ref{dra2}) and (\ref{dra4}) if we apply H\"older's inequality to the right-hand sides.
\end{remark}

\section{Mixed Radon Transforms and Riesz Potentials}

It is  known that the classical hyperplane Radon transform, its $k$-plane generalization, and their duals intertwine Laplace operators on the source space and the target space. More general intertwining formulas can be obtained if we replace the Laplace operators by the corresponding
Riesz potentials; cf. (\ref{ktyu}). Our aim in this section is to extend these formulas to the mixed Radon transforms (\ref{rpeeq}). We also obtain certain  Grassmannian analogues of Fuglede's formula (\ref{kryafu}) and its generalizations (\ref{ktyu}). Throughout this section, we keep the notation from Subsection \ref{wqwafc678}.

\subsection{Intertwining Formulas}

\begin{theorem} \label {dwalg} If $\;0<\a<n-k-j$, then
\be\label{wleiy}
(I_{n-k}^{\a} \rjk f)(\z)= (\rjk I_{n-j}^{\a} f)(\z), \qquad \z\in \agnk,\ee
provided that either side of this equality exists in the Lebesgue sense.
\end{theorem}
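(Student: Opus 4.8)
The plan is to verify (\ref{wleiy}) by a direct Fubini computation after fixing the linear part $\eta\in\gnk$ of $\z=\z(\eta,v)$; this is legitimate because $\rjk$ commutes with rotations and both Riesz potentials act only on the fiber variable $v\in\eta^\perp$. Throughout I would use, for each $\xi\in G_j(\eta^\perp)$ (for which $\xi\perp\eta$ automatically), the orthogonal decompositions $\eta^\perp=\xi\oplus W_\xi$ and $\xi^\perp=\eta\oplus W_\xi$, where $W_\xi:=\xi^\perp\cap\eta^\perp$ has dimension $\ell=n-j-k$. The starting observation is that $f(\xi+u+w)$ depends on $w\in\eta^\perp$ only through $\Pr_{W_\xi}w$, since $u\in\eta\subset\xi^\perp$ while the $\xi$-component of $w$ is absorbed into the plane $\xi$. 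Writing $v=v_1+v_2$ with $v_1=\Pr_\xi v$, $v_2=\Pr_{W_\xi}v$, and the integration variables likewise, my aim is to collapse each side of (\ref{wleiy}) to one and the same ``core'' integral over $G_j(\eta^\perp)\times\eta\times W_\xi$.

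For the left-hand side I substitute the definition (\ref{rpq}) into the fiber Riesz integral $(I_{n-k}^\a\,\cdot)(\eta,v)$ and interchange the order of integration. The inner integral over $w_1\in\xi\cong\bbr^j$ of the kernel $(|v_1-w_1|^2+|v_2-w_2|^2)^{-(n-k-\a)/2}$ is the standard Beta-type integral
$$\intl_{\bbr^j}\frac{dw_1}{(|w_1|^2+\rho^2)^{(n-k-\a)/2}}=\pi^{j/2}\,\frac{\Gam((\ell-\a)/2)}{\Gam((n-k-\a)/2)}\,\rho^{-(\ell-\a)},\qquad \rho=|v_2-w_2|,$$
which converges precisely because $\a<\ell$. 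This reduces the left-hand side to a constant times
$$\mathcal K:=\intl_{G_j(\eta^\perp)}d\xi\intl_\eta du\intl_{W_\xi}\frac{f(\xi,u+w_2)}{|\Pr_{W_\xi}v-w_2|^{\ell-\a}}\,dw_2.$$
For the right-hand side I expand $(I_{n-j}^\a f)(\xi+u+v)$ as its defining integral over the fiber $\xi^\perp=\eta\oplus W_\xi$, write $z=z_1+z_2$ accordingly, and make the substitution $u\mapsto u+z_1$ in the $\eta$-integral; the integrand then no longer depends on $z_1$, so the $z_1\in\eta\cong\bbr^k$ integral of $(|z_1|^2+|z_2|^2)^{-(n-j-\a)/2}$ is the same Beta-type integral with $m=k$, again finite exactly when $\a<\ell$. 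After the harmless substitution $w_2=v_2-z_2$ the right-hand side collapses to the same $\mathcal K$, up to a constant.

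It then remains only to match the two constants, which is pure bookkeeping. Using $\gamma_m(\a)\,\Gam((m-\a)/2)=2^\a\pi^{m/2}\Gam(\a/2)$, the left-hand constant $\pi^{j/2}\Gam((\ell-\a)/2)\,[\gamma_{n-k}(\a)\Gam((n-k-\a)/2)]^{-1}$ and the right-hand constant $\pi^{k/2}\Gam((\ell-\a)/2)\,[\gamma_{n-j}(\a)\Gam((n-j-\a)/2)]^{-1}$ both reduce to $\pi^{-\ell/2}\,[2^\a\Gam(\a/2)]^{-1}$, and (\ref{wleiy}) follows.

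The main obstacle is not the algebra but the justification of the repeated interchanges of integration. I would handle this in the standard two-step fashion: first assume $f\ge0$, so that every integral above is a Tonelli integral and the chain of equalities holds in $[0,\infty]$ unconditionally, giving (\ref{wleiy}) for nonnegative $f$; then for general $f$, the hypothesis that one side of (\ref{wleiy}) exists in the Lebesgue sense, combined with the nonnegative case applied to $|f|$, shows that the full iterated integral converges absolutely, which legitimizes Fubini's theorem for $f$ and yields the stated identity. The restriction $0<\a<\ell=n-k-j$ enters in exactly one place, namely the convergence of the two auxiliary $\bbr^j$ and $\bbr^k$ integrals (the requirement $\,(n-k-\a)/2>j/2\,$ and $\,(n-j-\a)/2>k/2\,$ in the Beta-type formula), and is therefore indispensable to the argument.
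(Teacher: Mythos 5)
Your proof is correct, and it reaches the same destination as the paper's --- both arguments are direct Fubini computations that collapse the two sides of (\ref{wleiy}) to one and the same core integral and then check that the two Beta-type constants coincide --- but the route is genuinely different. The paper first uses rigid-motion invariance to reduce to $\z=\z(\bbe_k,0)$, replaces $f$ by its $O(n-k)$-average $\tilde f$, and then evaluates everything in polar plus bi-spherical coordinates on $\bbs^{n-k-1}$ (resp.\ $\bbs^{n-j-1}$), the condition $\a<n-j-k$ entering through the convergence of $\int_0^{\pi/2}\sin^{n-k-j-\a-1}\om\,d\om$. You instead keep $\z=(\eta,v)$ general, use the Cartesian splittings $\eta^\perp=\xi\oplus W_\xi$ and $\xi^\perp=\eta\oplus W_\xi$ with $W_\xi=\xi^\perp\cap\eta^\perp$, and integrate the Riesz kernel over its flat directions via $\int_{\bbr^m}(|w|^2+\rho^2)^{-\beta}\,dw$; this is essentially the projection--slice identity for Riesz kernels and has the advantage of exhibiting both sides as the same constant times the $\a$-Riesz potential on the $\ell$-dimensional space $W_\xi$, which ties the result transparently to (\ref{ktyu}). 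Your explicit Tonelli-then-Fubini justification is also cleaner than what the paper records. Two cosmetic slips that do not affect the argument: in the right-hand-side computation, after the shift $u\mapsto u+z_1$ it is the (new) $u$-integral of $(|u|^2+|v_2-z_2|^2)^{-(n-j-\a)/2}$ that produces the Beta factor, not the $z_1$-integral as written; and the final common constant should carry the factor $\Gam((\ell-\a)/2)$, i.e.\ it equals $\Gam((\ell-\a)/2)\,\pi^{-\ell/2}\,[2^{\a}\Gam(\a/2)]^{-1}$ on both sides --- since the factor is common, the equality $C_{\rm left}=C_{\rm right}$ and hence (\ref{wleiy}) are unaffected.
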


\begin{remark}  Before we prove this theorem, some comments are in order.

\vskip 0.2 truecm

\noindent{\bf 1.} In the limiting cases $j=0$ (the $k$-plane transform) and $k=0$ (the dual $j$-plane transform), the formula (\ref{wleiy}) agrees with (\ref{ktyu}). The corresponding formulas for the hyperplane Radon transform and its dual can be found in \cite[Proposition 4.38]{Ru15}.

\vskip 0.2 truecm

\noindent{\bf 2.} It is natural to conjecture that for sufficiently good $f$,
(\ref{wleiy}) extends by analyticity  to all complex $\a$. If $\a=-2m$, $m \in \{1,2, \ldots\}$,  then (\ref{wleiy}) reads
\be\label{wleiytt}
(-\Del_{n-k})^{m} \rjk f= \rjk (-\Del_{n-j})^{m} f,\ee
where $\Del_{n-k}$ and $\Del_{n-j}$ stand for the Laplace operators on the corresponding fibers.

\vskip 0.2 truecm

\noindent{\bf 3.}
In the case $j+k=n-1$, $m=1$, the formula
(\ref{wleiytt}) was proved by Gonzalez  under the assumption $f\in C_c^\infty (\agnk)$; cf. \cite[Lemma 3.3]{Go87}.  Similar formulas for the Radon-John transform and its dual are due to Helgason \cite[Lemma 8.1]{H65}.

\vskip 0.2 truecm

\noindent{\bf 4.} For the hyperplane Radon transform on $\rn$ and its dual, an analogue of (\ref{wleiy})  for all $\a\in \bbc$ in the corresponding Semyanistyi-Lizorkin spaces can be found in \cite[Proposition 4.58]{Ru15}.

\end{remark}

\noindent{\it Proof of Theorem \ref {dwalg}.}  We assume $ \z\equiv \z(\bbe_k, 0)$ and set
\[I=(I_{n-k}^{\a} \rjk f)(\bbe_k, 0),\qquad  J=(\rjk I_{n-j}^{\a} f)(\bbe_k, 0).\]
Because the operators in (\ref{wleiy}) commute with rigid motions, it suffices to show that $I=J$. We recall the notation for the subspaces:
\[ \bbe_{j}=\bbr e_1 \oplus \cdots \oplus\bbr e_{j},\qquad \bbe_{k}=\bbr e_{n-k+1} \oplus \cdots \oplus\bbr e_{n};\]
\[ \bbr^{n-j}=\bbr e_{j+1} \oplus \cdots \oplus\bbr e_{n},\qquad \bbr^{n-k}=\bbr e_1 \oplus \cdots \oplus\bbr e_{n-k}. \]
 Then
\bea
I&=&\frac{1}{\gam_{n-k}(\a)}\intl_{\bbr^{n-k}} |v|^{\a -n+k}dv\intl_{G_j (\bbr^{n-k})} d\xi \intl_{\bbe_k \cap \,\xi^\perp} f(\xi +u+v)\, du\nonumber\\
&=&\frac{1}{\gam_{n-k}(\a)}\intl_{\bbr^{n-k}} |v|^{\a -n+k}dv\intl_{O(n-k)} d\gam \intl_{\bbe_k} f(\gam(\bbe_j +u+v))\, du\nonumber\\
&=&\frac{1}{\gam_{n-k}(\a)}\intl_{\bbe_k} du\intl_0^\infty r^{\a -1} dr\intl_{\bbs^{n-k-1}}d\th \intl_{O(n-k)} f(\gam(\bbe_j +u+r\th))\, d\gam.\nonumber\eea
This expression can be transformed by making use of the bi-spherical coordinates \cite[p. 31]{Ru15}
\[\th=\vp \cos \om +\psi\sin \om, \quad \vp \in \bbs^{j-1}, \quad \psi \in \bbs^{n-k-j-1}, \quad 0<\om <\pi/2,\]
\[ d\th =\cos^{j-1} \om \,\sin^{n-k-j-1} \om  \,d\vp \, d\psi \, d\om. \]
We obtain
\bea I&=&\frac{1}{\gam_{n-k}(\a)}\intl_{\bbe_k} du\intl_0^\infty r^{\a -1} dr\intl_0^{\pi/2}\cos^{j-1} \om \,\sin^{n-k-j-1} \om\, d\om \nonumber\\
&\times& \intl_{\bbs^{j-1}}d\vp\intl_{\bbs^{n-k-j-1}}d\psi \intl_{O(n-k)} f(\gam(\bbe_j +u+r\psi\sin \om))\, d\gam.\nonumber\eea
The integral over $O(n-k)$  depends only on $r\sin \om$ and $u$. We denote it by $\tilde f (r\sin \om, u)$ and continue:
\bea
I \! \!&=&  \!\!\frac{\sig_{j-1}\,\sig_{n-k-j-1}}{\gam_{n-k}(\a)}\intl_{\bbe_k} \! du \!\intl_0^{\pi/2} \!\cos^{j-1} \om \,\sin^{n-k-j-1} \om\, d\om  \!\intl_0^\infty  \!r^{\a -1} \tilde f (r\sin \om, u)\,dr\nonumber\\
\label{jas70} &=&c_1 \intl_{\bbe_k} du \intl_0^\infty s^{\a -1} \tilde f (s, u)\,ds,\eea
where
\bea c_1&=&\frac{\sig_{j-1}\,\sig_{n-k-j-1}}{\gam_{n-k}(\a)} \intl_0^{\pi/2}\cos^{j-1} \om \,\sin^{n-k-j-\a-1} \om\, d\om\nonumber\\
&=&\frac{2^{1-a}\, \Gam((n-k-j-\a)/2)}{\Gam(\a/2)\, \Gam((n-k-j)/2)}.\nonumber\eea

Let us show that  $J$ has the form (\ref{jas70}) too. We have
\bea
J&=&\intl_{G_j (\bbr^{n-k})} d\xi\intl_{\bbe_k \cap \,\xi^\perp} (I_{n-j}^{\a}f)(\xi, u)\, du\qquad \text{\rm (note that $\bbe_k\cap \xi^\perp = \bbe_k$)}\nonumber\\
&=&\frac{1}{\gam_{n-j}(\a)}\intl_{O(n-k)} d\gam  \intl_{\bbe_k} du\intl_{\bbr^{n-j}} |y|^{\a -n+j} f(\gam(\bbe_j +u+y))\, dy\nonumber\\
&=&\frac{1}{\gam_{n-j}(\a)}\intl_{\bbe_k} du\intl_0^\infty r^{\a -1} dr\intl_{\bbs^{n-j-1}}d\th \intl_{O(n-k)} f(\gam(\bbe_j +u+r\th))\, d\gam.\nonumber\eea
Setting
\[\th=\vp \cos \om +\psi\sin \om, \quad \vp \in \bbs^{n-k-j-1}, \quad \psi \in \bbs^{k-1}, \quad 0<\om <\pi/2,\]
(cf. the first part of the proof), we continue
\bea
J&=&\frac{1}{\gam_{n-j}(\a)}\intl_{\bbe_k} du\intl_0^\infty r^{\a -1} dr\intl_0^{\pi/2}\cos^{n-j-k-1} \om \,\sin^{k-1} \om\, d\om \nonumber\\
&\times& \intl_{\bbs^{n-k-j-1}}d\vp\intl_{\bbs^{k-1}}d\psi \intl_{O(n-k)} f(\gam(\bbe_j +u+r\vp\, cos \,\om +r\psi\sin \om))\, d\gam.\nonumber\eea
The integral over $O(n-k)$  depends only on $r\cos \om$ and $u$.  We denote it by $\tilde f (r\cos \om, u)$. Then
\bea
J \! \!&=&  \!\!\frac{\sig_{k-1}\,\sig_{n-k-j-1}}{\gam_{n-j}(\a)}\intl_{\bbe_k} \! du \!\intl_0^{\pi/2} \!\cos^{n-j-k-1} \om \,\sin^{k-1} \om\, d\om  \!\intl_0^\infty  \!r^{\a -1} \tilde f (r\cos \om, u)\,dr\nonumber\\
\label{jas701} &=&c_2 \intl_{\bbe_k} du \intl_0^\infty s^{\a -1} \tilde f (s, u)\,ds,\eea
where
\bea c_2&=&\frac{\sig_{k-1}\,\sig_{n-k-j-1}}{\gam_{n-j}(\a)} \intl_0^{\pi/2}\cos^{n-j-k-\a-1}\om \,\sin^{k-1} \om\, d\om \nonumber\\
&=&\frac{2^{1-a}\, \Gam((n-k-j-\a)/2)}{\Gam(\a/2)\, \Gam((n-k-j)/2)}=c_1.\nonumber\eea
Comparing (\ref{jas70}) and (\ref{jas701}), we complete the proof.

\subsection{Fuglede Type Formulas}

We introduce the following integral operators acting on functions $h: \rn \to \bbc$:
\be\label{heit}
\Lam_{j,k} h =R_k^* \,\rjk R_j h, \qquad \Lam_{k,j} h =R_j^* \,\rkj R_k h.\ee
If $h$ is a radial function, $h(x)=h_0(|x|)$, then (\ref{pffrr}) together with (\ref{ppaawsdz}) and (\ref{ppaawsdz1}) gives

\bea
\Lam_{j,k} h &=&\frac{\tilde  c_1\,\pi^{j/2}\Gam (n/2)}{\Gam ((n-k)/2)}\, r^{2-n}\,I^{k/2}_{+,2} I^{j/2}_{+, 2} \, s^{n-j-k-2} I^{k/2}_{-, 2}I^{j/2}_{-,2} h_0\nonumber\\
&=&\frac{\tilde c_1\,\pi^{j/2}\Gam (n/2)}{\Gam ((n-k)/2)}\, r^{2-n}\,I^{(j+k)/2}_{+,2} \, s^{n-j-k-2} I^{(j+k)/2}_{-, 2} h_0.\nonumber\eea
By (\ref{rfa}), the last expression is a constant multiple of the Riesz potential $I_n^{j+k} h$.

This observation paves the way to the following general result.
\begin{theorem} \label {icgs9} If  $j+k<n$, then
\be\label{heit1} R_k^* \,\rjk R_j h \!=\! R_j^* \,\rkj R_k h \!=\! c \,I_n^{j+k} h, \quad c\!=\!\frac{2^{j+k} \pi^{(j+k)/2}\, \Gam (n/2)}{\Gam ((n-j-k)/2)},\ee
provided that the Riesz potential $I_n^{j+k} h$ exists in the Lebesgue sense.
\end{theorem}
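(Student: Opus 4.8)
The plan is to establish a Grassmannian analogue of Fuglede's formula by reducing the composite $R_k^*\,\rjk R_j$ to the ordinary Fuglede operator $R_{j+k}^*R_{j+k}$ and then applying Theorem \ref{howa1fu} with $k$ replaced by $j+k$. This is legitimate since $1\le j+k\le n-1$, and, crucially, the Fuglede constant $c_{j+k,n}=2^{j+k}\pi^{(j+k)/2}\Gam(n/2)/\Gam((n-j-k)/2)$ is precisely the constant $c$ appearing in (\ref{heit1}). Because $c$ is symmetric in $j$ and $k$, and $\rkj$ is nothing but $\rjk$ with the roles of $j$ and $k$ interchanged (compare (\ref{rpq}) and (\ref{rpqd})), it suffices to prove the single identity $R_k^*\,\rjk R_j h=R_{j+k}^*R_{j+k}h$; the companion $R_j^*\,\rkj R_k h=R_{j+k}^*R_{j+k}h$ follows by the same argument after swapping $j$ and $k$, and both asserted equalities in (\ref{heit1}) then result at once.

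First I would use Fubini to recognize $\rjk R_j$ as an averaged $(j+k)$-plane transform. Writing $\zeta\equiv\zeta(\eta,v)$, substituting the definition (\ref{rpq}) together with $(R_jh)(\xi+u+v)=\intl_{\xi}h(u+v+w)\,d_\xi w$, one obtains
\be
(\rjk R_j h)(\eta,v)=\intl_{G_j(\eta^{\perp})}d_{\eta^{\perp}}\xi\intl_{\eta}d_\eta u\intl_{\xi}h(u+v+w)\,d_\xi w.
\ee
The decisive point is orthogonality: since $\xi\in G_j(\eta^{\perp})$, the subspace $\eta\oplus\xi$ is an orthogonal direct sum of dimension $j+k$, so $(u,w)\mapsto u+w$ identifies $\eta\times\xi$ with $\eta\oplus\xi$ and $d_\eta u\,d_\xi w$ with the Euclidean element $d_{\eta\oplus\xi}(u+w)$. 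Hence the inner double integral is exactly $(R_{j+k}h)((\eta\oplus\xi)+v)$, and
\be
(\rjk R_j h)(\eta,v)=\intl_{G_j(\eta^{\perp})}(R_{j+k}h)((\eta\oplus\xi)+v)\,d_{\eta^{\perp}}\xi.
\ee

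Next I would apply the dual transform $R_k^*$ in its motion-averaged form (\ref{mmsdcrt}). Putting $\eta=\gam\eta_0$ for a fixed $k$-plane $\eta_0$ through the origin, and noting that the $(j+k)$-plane $(\eta\oplus\xi)+\Pr_{\eta^{\perp}}x$ contains $x$ (it contains the $k$-plane $\gam\eta_0+x$), one is led to
\be
(R_k^*\,\rjk R_j h)(x)=\intl_{O(n)}d\gam\intl_{G_j((\gam\eta_0)^{\perp})}(R_{j+k}h)((\gam\eta_0\oplus\xi)+x)\,d\xi.
\ee
The main obstacle, and the heart of the proof, is to show that the push-forward of the product measure $d\gam\,d\xi$ under the map $(\gam,\xi)\mapsto\gam\eta_0\oplus\xi$ is exactly the $O(n)$-invariant probability measure on $G_{n,j+k}$. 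This holds because the map is $O(n)$-equivariant and $d\gam\,d\xi$ is a probability measure, so its image is an invariant probability measure on the homogeneous space $G_{n,j+k}$, which is unique. Replacing the iterated average accordingly gives $(R_k^*\,\rjk R_j h)(x)=\intl_{G_{n,j+k}}(R_{j+k}h)(\om+x)\,d\om=(R_{j+k}^*R_{j+k}h)(x)$, and Theorem \ref{howa1fu} completes the argument. Finally, all the Fubini interchanges are valid once $I_n^{j+k}h$ converges: for $h\ge 0$ every integral above is a tower of nonnegative integrals governed by Tonelli's theorem, so the chain of equalities holds in $[0,\infty]$, and the general case follows by splitting $h$ into its positive and negative parts.
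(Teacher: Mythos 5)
Your proof is correct, and it reaches the result by a genuinely different route than the paper, although both hinge on the same kernel observation: for $\xi\subset\eta^\perp$ the product of the Lebesgue measures on $\eta$ and $\xi$ is the Lebesgue measure on the orthogonal sum $\eta\oplus\xi$, so the inner double integral collapses into a single $(j+k)$-plane integral. The paper exploits this at a single point: since all operators commute with rigid motions, it suffices to compute $(R_k^*\,\rjk R_j h)(0)$, where the outer $O(n)$-average immediately absorbs the Grassmannian integral over $G_j(\bbr^{n-k})$; the double integral over $\bbe_k$ and $\bbe_j$ then becomes an integral over $\bbe_j\oplus\bbe_k$, and polar coordinates yield $\frac{\sig_{j+k-1}}{\sig_{n-1}}\int_{\rn}h(x)|x|^{j+k-n}dx=c\,(I_n^{j+k}h)(0)$ directly, with no reference to $R_{j+k}$. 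You instead establish the pointwise operator identity $R_k^*\,\rjk R_j=R_{j+k}^*R_{j+k}$ and then quote Theorem \ref{howa1fu} with $k$ replaced by $j+k$; the constants match exactly. Your version buys a cleaner and somewhat stronger structural statement of independent interest, which makes the appearance of $I_n^{j+k}$ inevitable rather than computational; the price is the one nontrivial measure-theoretic step, namely identifying the push-forward of $d\gam\,d\xi$ under $(\gam,\xi)\mapsto\gam\eta_0\oplus\xi$ with the invariant probability measure on $G_{n,j+k}$. Your justification by equivariance plus uniqueness of the invariant probability measure on a compact homogeneous space is sound (it is cleanest after reparametrizing $\xi=\gam\xi'$ with $\xi'\in G_j(\eta_0^\perp)$ so that the domain becomes an honest product), whereas the paper's evaluation at the origin sidesteps this issue entirely. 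Your closing Tonelli argument for the convergence hypothesis is at the same (acceptable) level of rigor as the paper's.
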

\begin{proof}  Because all operators in (\ref{heit1}) commute with rigid motions and $j$ and $k$ are interchangeable,  it suffices to show that $(R_k^* \,\rjk R_j h)(0)= c \,(I_n^{j+k} h)(0)$.
By (\ref{mmsdcrt}) and (\ref{rpq}),
\bea\label{62}
&&(R_k^* \,\rjk R_j h) (0)=\intl_{O(n)} (\rjk R_j h)\,(\gam\bbe_k)\,d\gam\nonumber\\
&&=\intl_{O(n)}d\gam \intl_{G_j(\bbr^{n-k})}\,d \xi\intl_{\bbe_k} (R_j h)\,(\gam\xi +\gam y)\,dy\nonumber\\
&&=\intl_{O(n)}d\gam \intl_{O(n-k)}d\a \intl_{\bbe_k} (R_j h)\,(\gam\a\bbe_j+ \gam y)\,dy\nonumber\\
&&=\intl_{O(n)}d\gam\intl_{\bbe_k} (R_j h)\,(\gam (\bbe_j + y))\,dy.\nonumber\eea
Using (\ref{rtra1kfty}) (with $k$ replaced by $j$), we write the last expression as follows.
\bea
&&\intl_{O(n)}d\gam\intl_{\bbe_k} dy\intl_{\bbe_j} h(\gam (y+z))\, dz=\intl_{O(n)}d\gam\intl_{\bbe_j \oplus \,\bbe_k}  h(\gam \tilde y)\, d\tilde y\nonumber\\
&&=\frac{\sig_{j+k-1}}{\sig_{n-1}}\intl_0^\infty r^{j+k-1} dr \intl_{\bbs^{n-1}} h(r\th)\, d\th\nonumber\\
&&=\frac{\sig_{j+k-1}}{\sig_{n-1}} \intl_{\rn} h(x)\, |x|^{j+k-n}\, dx= c \,(I_n^{j+k} h)(0),\nonumber\eea
as desired.
\end{proof}

The formula (\ref{heit1}) is a generalization  of Fuglede's formula (\ref{kryafu}). The latter can be obtained from (\ref{heit1}) if we formally set $j=0$ or $k=0$.

\section{Inversion formulas for  $\rjk f$}

The following preliminary discussion explains the essence of the matter and the plan of the section. It might be natural to expect that $\rjk$ is injective on some standard function space, like $C_c^\infty  (\agnj)$, if $\dim \agnj \le \dim \agnk$, which is equivalent to $(j+1)(n-j)\le (k+1)(n-k)$. The latter splits in two cases:

\vskip 0.2 truecm

(a) $j+k=n-1$ for all $j$ and $k$;

(b) $j+k<n-1$ for $j\le k$.

\vskip 0.2 truecm

In the present paper we do not investigate both (a) and (b) in full generality and proceed as follows.  We first consider  $\rjk f$ for radial $f$,   when  an explicit  inversion formula is available for all $j+k<n$. Then we address to the case (a) and obtain an inversion formula
in  a sufficiently large class of functions $f$, including functions in Lebesgue spaces and continuous functions. The case of all $j+k<n$ for such functions remains open. Some progress can be achieved if
 we restrict the class of  functions $f$ to the  range of the $j$-plane transform. Under this assumption, $\rjk f$ can be explicitly inverted, no matter whether $j\le k$ or vice versa.

\subsection{The Radial Case}

If $f$ is   radial, then  $\rjk f$ is  radial too and  we have the following result.
\begin{theorem}\label {wiuytr}
 Let $f(\t)\equiv f_0 (|\t|)$   be a locally integrable  radial function on $\agnj$ satisfying (\ref{pfqq4e}). The function $f_0$ can be recovered from the Radon transform $(\rjk f)(\z)\equiv (I_{j,k} f_0)(|\z|)$ for all $j+k<n$ by the formula
\be\label{sjue}
f_0(t)=\tilde c_1{\!}^{-1}\,(\Cal D^{k/2}_{-, 2} \, r^{2-n+j+k} \,\Cal D ^{j/2}_{+, 2} \,s^{n-k-2}\,I_{j,k} f_0)(t),\ee
where  $\tilde c_1=\pi^{k/2}\, \Gam ((n-k)/2)/\Gam ((n-j-k)/2)$ and the  Erd\'{e}lyi--Kober fractional derivatives $\Cal D^{k/2}_{-, 2}$ and $ \Cal D ^{j/2}_{+, 2}$ are defined by  (\ref{frr+z})-(\ref{frr+z3}).
\end{theorem}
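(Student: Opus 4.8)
The plan is to read the inversion formula off the factorization (\ref{pffrrt}) by peeling away the three operators one at a time, using that the Erd\'elyi--Kober derivatives are the left inverses of the corresponding integrals. Set $g=I_{j,k}f_0$, so that $(\rjk f)(\z)=g(|\z|)$. By Lemma \ref{pr}, the hypotheses of the theorem (local integrability together with (\ref{pfqq4e})) imply both conditions in (\ref{pf4e}), so that (\ref{pffrrt}) holds and all intermediate functions are well defined. Rewriting (\ref{pffrrt}) as
\[
\frac{1}{\tilde c_1}\,s^{n-k-2}\,g(s)=\big(I^{j/2}_{+,2}\,r^{\ell-2}\,I^{k/2}_{-,2}f_0\big)(s),\qquad \ell=n-j-k,
\]
makes the structure of (\ref{sjue}) transparent: multiplication by $s^{n-k-2}$ cancels the prefactor $\tilde c_1 s^{-(n-k-2)}$, the internal power $r^{\ell-2}$ is removed by the factor $r^{2-n+j+k}=r^{2-\ell}$, and the derivatives $\Cal D^{j/2}_{+,2}$, $\Cal D^{k/2}_{-,2}$ undo the integrals $I^{j/2}_{+,2}$, $I^{k/2}_{-,2}$.

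Carrying this out, I would first apply $\Cal D^{j/2}_{+,2}$. By Theorem \ref{78awqe555} this is the left inverse of $I^{j/2}_{+,2}$, so
\[
\frac{1}{\tilde c_1}\,\Cal D^{j/2}_{+,2}\big(s^{n-k-2}g\big)(r)=r^{\ell-2}\,(I^{k/2}_{-,2}f_0)(r).
\]
Multiplying by $r^{2-n+j+k}=r^{2-\ell}$ isolates the remaining integral,
\[
\frac{1}{\tilde c_1}\,r^{2-n+j+k}\,\Cal D^{j/2}_{+,2}\big(s^{n-k-2}g\big)(r)=(I^{k/2}_{-,2}f_0)(r),
\]
and then applying $\Cal D^{k/2}_{-,2}$ and invoking Theorem \ref{78awqe} (with $\a=k/2$, the explicit forms being (\ref{frr+z33})--(\ref{frr+z3}) according to the parity of $k$) gives $\Cal D^{k/2}_{-,2}I^{k/2}_{-,2}f_0=f_0$ almost everywhere, which is precisely (\ref{sjue}).

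What remains, and where essentially all the care lies, is to confirm that each left-inverse step is legitimate, i.e.\ that the intermediate functions lie in the admissibility classes of Theorems \ref{78awqe555} and \ref{78awqe}. For the outer step I must check that $r\big(r^{\ell-2}(I^{k/2}_{-,2}f_0)(r)\big)=r^{\ell-1}(I^{k/2}_{-,2}f_0)(r)$ is locally integrable on $\bbr_+$; interchanging the order of integration reduces this, via $\ell+k=n-j$, to the two integrals in (\ref{pf4e}), exactly as in the verification of (\ref{puor}) carried out in the Appendix. For the inner step, Theorem \ref{78awqe} requires $f_0$ to satisfy (\ref{for10z}) with $\a=k/2$ for \emph{every} $a>0$, i.e.\ $\int_a^\infty|f_0(r)|\,r^{k-1}\,dr<\infty$; this is the second condition in (\ref{pf4e}), and it holds for all $a>0$ since the tail is finite for some $a_0$ while on each $[a,a_0]$ finiteness follows from the first condition in (\ref{pf4e}). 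The main obstacle is thus not the algebra but this bookkeeping of domains; once it is in place, the successive cancellations reproduce (\ref{sjue}) verbatim.
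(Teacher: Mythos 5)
Your proposal is correct and follows essentially the same route as the paper: rewrite the factorization (\ref{pffrrt}), verify that local integrability plus (\ref{pfqq4e}) give (\ref{pf4e}) and hence the admissibility hypotheses of Theorems \ref{78awqe555} and \ref{78awqe}, and then peel off $I^{j/2}_{+,2}$, the power $r^{\ell-2}$, and $I^{k/2}_{-,2}$ in turn. Your bookkeeping of the domains (in particular the local integrability of $r^{\ell-1}I^{k/2}_{-,2}f_0$ and the validity of (\ref{for10z}) for every $a>0$) is exactly the verification the paper leaves to the reader.
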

\begin{proof} By (\ref{pffrrt}),
\be\label{lrtvb}  (I_{j,k} f_0)(s)\!=\!\tilde c_1\,s^{k+2-n} (I^{j/2}_{+, 2} \, r^{n-j-k-2} I^{k/2}_{-, 2} f_0)(s).\ee
The assumption (\ref{pfqq4e}) is equivalent to $\int_a^\infty |f_0 (t)|\, t^{k-1}dt <\infty$ for some $a>0$. The local integrability of $f$ implies
 $r^{n-j-k-1} I^{k/2}_{-, 2} f_0 \in L^1_{loc} (\bbr_+)$ (a simple calculation is left to the reader). Hence the conditions of Theorems \ref{78awqe555} and  \ref{78awqe} are satisfied and  both fractional integrals in (\ref{lrtvb}) can be inverted to give (\ref{sjue}).
\end{proof}

Interchanging $j$ and $k$, the reader can easily  obtain a similar statement  for the dual transform $\rkj \vp$.

\subsection{The  Case $j+k=n-1$}

\subsubsection{The Structure of $\rjk f$}
 We consider the flag
\[ \frF =\{(\xi, \eta, v): \xi \in \gnj, \; \eta \in G_k (\xi^\perp), \; v\in \xi^\perp \cap \eta^\perp\}.\]
 Given a function $f$ on $\agnj$, we define a function $\tilde f$ on $\frF$  by the formula
\[ \tilde f (\xi, \eta, v)=\intl_{\eta}f(\xi+u+v)\,d_\eta u.\]
 This function is the inner integral in  the definition (\ref {rpq}) of $\rjk f$ and has two interpretations. On the one hand, for every fixed $\xi \in \gnj$, $\tilde f$ is the $k$-plane transform of the function $u\to f(\xi,u)$ in the $(n-j)$-space $\xi^\perp$ :
\be\label{ksh}
\tilde f_\xi (\z)\equiv  \tilde f (\xi, \eta, v)=(R_{k, \xi^\perp}[f(\xi, \cdot)])(\z), \quad \z=\z(\eta ,v) \in G(k, \xi^\perp).\ee
On the other hand, for every fixed $\eta\in \gnk$, $\tilde f$ is a function on the affine Grassmannian $G(j, \eta^\perp)$:
\be\label{ksh1}
\tilde f_\eta (\t')\equiv  \tilde f (\xi, \eta, v)=\intl_\eta f(\t'+u)\, du, \quad \t'=\t'(\xi,v) \in G(j, \eta^\perp).\ee
The dual $j$-plane transform of $\tilde f_\eta (\cdot)$ in the  $(n-k)$-space $\eta^\perp$  at the point $v\in \eta^\perp$ has the form
\bea
\label{ksh2}&&(R^*_{j, \eta^\perp}\tilde f_\eta)(v)  =\intl_{O( \eta^\perp)} \tilde f_\eta (\gam\xi+v )\, d\gam\\
&&=\intl_{G_j(\eta^{\perp})}\,d_{\eta^{\perp}} \xi\intl_{\eta}f(\xi+u+v)\,d_\eta u= (\rjk f)(\eta,v),\nonumber
\eea
where $O( \eta^\perp)$ is the subgroup of $O(n)$, which consists of orthogonal transformations in $\eta^\perp$.

The above reasoning shows that $\rjk$ is a certain mixture (but not a composition) of the $k$-plane transform and the dual $j$-plane transform.

\subsubsection{Inversion  Procedure}

According to the structure of the operator $\rjk$, to reconstruct $f$ from $\rjk f$,  we  first
 invert the dual  $j$-plane transform (\ref  {ksh2}) in the $(n-k)$-space $\eta^\perp$ and then the $k$-plane transform (\ref  {ksh})  in the $(n-j)$-space $\xi^\perp$. Because,  in general,  the dual  $j$-plane transform  is injective only in the co-dimension one case (cf. \cite[Theorem 4.4]{Ru04a}),  we restrict  to the case $j=n-k-1$, when the injectivity can be proved on a pretty  large class of functions.

\vskip 0.2 truecm

\noindent STEP 1.  We make use of Corollary \ref{cord} with $\rn$ replaced by $\eta^\perp$, when the condition (\ref{lafit2d}) becomes
\be\label{slyyy}
\intl_{G(j,\eta^\perp)}\frac{|\tilde f_\eta(\t')|}{1+|\t'|}\,d\t'<\infty\quad \text {\rm for almost all $\eta\in \gnk$}.\ee
Under this condition,
\be\label{slvkjn3} \tilde f_\eta (\t')=(R^*_{\eta^\perp})^{-1} [(\rjk f)(\eta, \cdot)](\t'), \quad \t'= \xi+v \in G(j, \eta^\perp),\ee
where $(R^*_{\eta^\perp})^{-1}$ stands for the inverse dual  $j$-plane transform  in the $(j+1)$-subspace $\eta^\perp$. An explicit formula for $(R^*_{\eta^\perp})^{-1}$ can be obtained from the equality (\ref{lafit77}) adapted for our case.

Our next aim is to find sufficient conditions for (\ref{slyyy}) in terms of $f$. We observe that (\ref{slyyy}) will be proved if we show that
\[ I=\intl_{\gnk}d\eta\intl_{G_j(\eta^{\perp})}d_\eta\xi\intl_{\xi^{\perp}\cap\eta^{\perp}}\frac{|\tilde f_\eta(\xi,v)|}{1+|v|}\,dv<\infty\,.
 \]
Changing the order of integration and using (\ref{ksh}),
we can write $I$ as
\[ I=\intl_{\gnj}d\xi\intl_{G_k(\xi^{\perp})}d_\xi\eta\intl_{\xi^{\perp}\cap\eta^{\perp}}\frac{|(R_{k, \xi^\perp}[f(\xi, \cdot)])(\eta,v)|}{1+|v|}dv,\]
where $R_{k, \xi^\perp}$ stands for the $k$-plane transform in the $(k+1)$-space $\xi^\perp$.  Using (\ref{SDIvyh34}) with $n=k+1$, we have
\[\intl_{G_k(\xi^{\perp})}d_\xi\eta\intl_{\xi^{\perp}\cap\eta^{\perp}}\frac{|(R_{k, \xi^\perp}[f(\xi, \cdot)])(\eta,v)|}{1+|v|}dv\le
 c\, \intl_{\xi^{\perp}} \frac{|f(\xi, u)|}{1+|u|}\, \log (2+|u|)\, du,\]
whence
\[ I\le  c\, \intl_{\gnj}d\xi\intl_{\xi^{\perp}} \frac{|f(\xi, u)|}{1+|u|}\, \log (2+|u|)\, du =c\intl_{\agnj}\frac{|f(\t)|}{1+|\t|}\, \log (2+|\t|)\, d\t.\]
Thus the inversion formula (\ref{slyyy}) is valid if
\be\label{hasde}
\intl_{\agnj}\frac{|f(\t)|}{1+|\t|}\, \log (2+|\t|)\, d\t <\infty.\ee

\begin{definition} The class of all functions $f$ satisfying (\ref{hasde}) will be denoted by $L_{\log}(\agnj)$.

\end{definition}

\vskip 0.2 truecm

\noindent STEP 2. By Corollary \ref{corr}, the function  $f(\xi,\cdot)$ on  $\xi^\perp$ can be reconstructed from its $k$-plane transform $\tilde f_\xi (\z)=(R_{k, \xi^\perp}[f(\xi, \cdot)])(\z)$ if
 \[ \intl_{\xi^{\perp}} \frac{|f(\xi,u)|}{1+|u|} d\,u\,<\infty .
  \]
  The latter is guaranteed for almost all $\xi$  if
\be\label{hasdeyy}
\intl_{\agnj} \frac{|f(\t)|}{1+|\t|} d\t <\infty.\ee
Thus Step 2 gives one more assumption for $f$ which  is, however, weaker than (\ref{hasde}).

Combining Step 1 and Step 2, we arrive at the following statement.
\begin{theorem} \label {kwwte}
If $j+k=n-1$, then  every function $f\in L_{\log}(\agnj)$  can be reconstructed from $\vp=\rjk f$ by
the formula
\be\label{slvkjn11}
f(\xi, u)=(R^{-1}_{k, \xi^\perp}[\tilde f_\xi])(u),    \ee
where
\be\label{slvkjn12}  \tilde f_\xi (\eta, v)\equiv \tilde f_\eta (\xi, v)=(R^*_{\eta^\perp})^{-1} [\vp(\eta, \cdot)](\xi,v),\ee
with the inverse transforms $R^{-1}_{k, \xi^\perp}$ and $(R^*_{\eta^\perp})^{-1}$ being defined  according to Corollaries \ref{corr}  and \ref{cord}, respectively.
\end{theorem}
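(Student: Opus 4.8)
The plan is to invert $\rjk$ in two stages, exploiting the factorization recorded in (\ref{ksh})--(\ref{ksh2}): for fixed $\eta$ the datum $\vp=\rjk f$ is the dual $j$-plane transform $R^*_{j,\eta^\perp}\tilde f_\eta$ in the $(n-k)$-space $\eta^\perp$, whereas for fixed $\xi$ the intermediate function $\tilde f_\xi$ is the $k$-plane transform $R_{k,\xi^\perp}[f(\xi,\cdot)]$ in the $(n-j)$-space $\xi^\perp$. Accordingly, I would first recover $\tilde f_\eta$ from $\vp(\eta,\cdot)$, and then recover $f(\xi,\cdot)$ from $\tilde f_\xi$, using the identification $\tilde f_\xi(\eta,v)=\tilde f_\eta(\xi,v)$.

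For the first stage the decisive observation is that the hypothesis $j+k=n-1$ forces $\dim\eta^\perp=j+1$, so the dual $j$-plane transform in $\eta^\perp$ is exactly the dual hyperplane transform, which is injective and invertible by Corollary \ref{cord} (applied with $\rn$ replaced by $\eta^\perp$). To invoke that corollary I must check the integrability hypothesis (\ref{slyyy}) for almost every $\eta$. I would obtain this by integrating over the full Grassmannian, interchanging the order of integration via (\ref{ksh}) to pass from an integral against $\tilde f_\eta$ to one against $R_{k,\xi^\perp}[f(\xi,\cdot)]$, and then applying the $k$-plane mapping estimate (\ref{SDIvyh34}) with $n$ replaced by $k+1$. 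This bounds the relevant quantity by a constant times $\intl_{\agnj}(1+|\t|)^{-1}\log(2+|\t|)\,|f(\t)|\,d\t$, which is finite precisely because $f\in L_{\log}(\agnj)$; see (\ref{hasde}). The inversion (\ref{slvkjn12}) follows.

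For the second stage I would invert the $k$-plane transform $\tilde f_\xi=R_{k,\xi^\perp}[f(\xi,\cdot)]$ in the $(n-j)$-space $\xi^\perp$ by Corollary \ref{corr}, whose hypothesis requires only $\intl_{\xi^\perp}(1+|u|)^{-1}|f(\xi,u)|\,du<\infty$ for almost every $\xi$. This is guaranteed by the weaker condition (\ref{hasdeyy}), itself implied by $f\in L_{\log}(\agnj)$, and yields the reconstruction (\ref{slvkjn11}). Assembling the two stages produces the formula (\ref{slvkjn11})--(\ref{slvkjn12}) and completes the argument.

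The main obstacle is the first stage. The dual $j$-plane transform is in general non-injective, and it is solely the codimension-one condition $j+k=n-1$ (equivalently $\dim\eta^\perp=j+1$) that restores injectivity; every subsequent step is then a faithful adaptation of the hyperplane results of Section~2. The delicate point is therefore the passage from the global $L_{\log}$ bound (\ref{hasde}) to the almost-everywhere finiteness (\ref{slyyy}), carried out by the Fubini interchange together with the sharp estimate (\ref{SDIvyh34}); it is exactly this mechanism that singles out $L_{\log}(\agnj)$ as the natural hypothesis, the logarithmic weight being the price paid for inverting the dual transform fiber by fiber.
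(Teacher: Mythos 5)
Your proposal is correct and follows essentially the same route as the paper: the same two-stage factorization via $\tilde f_\eta$ and $\tilde f_\xi$, the same appeal to Corollaries \ref{cord} and \ref{corr} in the codimension-one fibers $\eta^\perp$ and $\xi^\perp$, and the same Fubini interchange combined with the estimate (\ref{SDIvyh34}) (with $n=k+1$) to derive the almost-everywhere condition (\ref{slyyy}) from the global hypothesis $f\in L_{\log}(\agnj)$. Nothing essential is missing.
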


\begin{remark} The condition $f\in L_{\log}(\agnj)$ in Theorem \ref{kwwte}
falls into the scope of the Existence Theorem \ref {kwwt} and differs from the latter only by the logarithmic factor. Thus
Theorem \ref{kwwte}  provides inversion of $\rjk f$ under almost minimal assumptions. Note also that
by H\"{o}lder's inequality, any function in  $L^p(\agnj)$, $1\le p<(n-j)/k$, and any continuous function of order $O(|\t|^{-\mu})$, $\mu>k$,
belong to $L_{\log}(\agnj)$, where the bounds for $p$ and $\mu$ are sharp; cf.   (\ref{lehget}).

\end{remark}

\subsection{Inversion of $\rjk f$ on the Range of the $j$-Plane Transform} \label {kslfeyut}

Theorem \ref{icgs9} implies the following inversion result, which resembles Corollary \ref{kryy6} for the dual $k$-plane transform.

\begin{theorem} \label{qqww}  Let $f \!=\! R_j h$, $h\in L^p (\rn)$. If  $1\le p <n/(j+k)$, then
\be\label{ktyu88} f\! =\! c^{-1} R_j \bbd_n^{j+k} R_k^*\, \rjk f,  \qquad c\!=\!\frac{2^{j+k} \pi^{(j+k)/2}\, \Gam (n/2)}{\Gam ((n-j-k)/2)}, \ee
where $\bbd_n^{j+k}$ is the Riesz fractional derivative (\ref{sdyt}). More generally, if $0<\a<n-j-k$ and  $1\le p <n/(j+k+\a)$,
 then
\be\label{ktyu88a} f\! =\! c^{-1} R_j \bbd_n^{j+k+\a} R_k^*\, I_{n-k}^\a  \rjk f= c^{-1} R_j \bbd_n^{j+k+\a}  R_k^*\,\rjk  I_{n-j}^\a f\ee
with the same constant $c$.
\end{theorem}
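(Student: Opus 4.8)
The plan is to reduce the inversion formula to the composition identity in Theorem~\ref{icgs9}, exactly as Corollary~\ref{kryy6} was derived from Theorem~\ref{howa1fu}. First I would substitute $f=R_j h$ into the right-hand side of (\ref{ktyu88}) and push the operators through. The key algebraic step is the recognition that
\[
R_k^* \,\rjk f = R_k^*\,\rjk R_j h = c\, I_n^{j+k} h,
\]
which is precisely the Fuglede-type formula (\ref{heit1}). Applying $\bbd_n^{j+k}$ and using that it is the left inverse of $I_n^{j+k}$ (see (\ref{sdyt})) gives $\bbd_n^{j+k} R_k^*\,\rjk f = c\,\bbd_n^{j+k} I_n^{j+k} h = c\, h$. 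Finally applying $R_j$ and dividing by $c$ returns $R_j h = f$, which is the claim.

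The one genuine subtlety, and the step I expect to be the main obstacle, is justifying that $\bbd_n^{j+k} I_n^{j+k} h = h$ under the stated hypothesis $h\in L^p(\rn)$. This requires $h$ to lie in the domain where the Riesz potential $I_n^{j+k}$ is defined and where the hypersingular integral representation (\ref{invs1})–(\ref{invs1a}) of $\bbd_n^{j+k}$ inverts it. From the sharpness statement following (\ref{rpot}), $I_n^{j+k} h$ is finite almost everywhere precisely when $1\le p < n/(j+k)$, which is exactly the assumed range; and the inversion $\bbd_n^{j+k} I_n^{j+k} h = h$ holds in the $L^p$ and almost-everywhere sense by the cited results (Theorems~3.32 and~3.44 in~\cite{Ru15}). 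I would verify that the composition $R_k^*\,\rjk R_j h$ is defined in the Lebesgue sense on this range, so that Theorem~\ref{icgs9} applies; this follows from the existence bounds in Corollary~\ref{kcf2} combined with the $L^p$-mapping properties of $R_j$ and the finiteness of $I_n^{j+k}h$.

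For the more general formula (\ref{ktyu88a}), I would insert the intertwining relation of Theorem~\ref{dwalg},
\[
I_{n-k}^\a\,\rjk f = \rjk\, I_{n-j}^\a f,
\]
valid for $0<\a<n-k-j$, to identify the two middle expressions. The outer computation then parallels the case $\a=0$: starting from $f=R_j h$ we compute, via (\ref{ktyu}) (specifically $I_n^\a R_k^*\vp = R_k^* I_{n-k}^\a \vp$) together with Theorem~\ref{icgs9},
\[
R_k^*\, I_{n-k}^\a\,\rjk R_j h = I_n^\a R_k^*\,\rjk R_j h = c\, I_n^\a I_n^{j+k} h = c\, I_n^{j+k+\a} h,
\]
using the semigroup property of Riesz potentials. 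Applying $\bbd_n^{j+k+\a}$ inverts $I_n^{j+k+\a}$, recovering $c\,h$, and then $R_j$ gives $c\,f$.

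The shift in the admissible range from $p<n/(j+k)$ to $p<n/(j+k+\a)$ is forced exactly by the sharp existence condition for $I_n^{j+k+\a}h$ to be finite, which is again where the principal care is needed. I would therefore organize the proof as two short displayed chains of operator identities, flagging at each appearance of $\bbd_n^{\,\cdot\,}I_n^{\,\cdot\,}=\mathrm{id}$ that the relevant Riesz potential is finite and invertible on the stated $L^p$ class, so that every intermediate quantity exists in the Lebesgue sense and the cited inversion theorems from~\cite{Ru15} apply. No new estimates are required beyond assembling Theorems~\ref{howa1fu}, \ref{dwalg}, \ref{icgs9}, the intertwining relations (\ref{ktyu}), and the sharp $L^p$-boundedness range of the Riesz potential.
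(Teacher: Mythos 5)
Your proposal is correct and follows essentially the same route as the paper: substitute $f=R_jh$, apply the Fuglede-type identity (\ref{heit1}) to get $c\,I_n^{j+k}h$, invert with $\bbd_n^{j+k}$, and for (\ref{ktyu88a}) combine the semigroup property of Riesz potentials with the intertwining relations (\ref{ktyu}) and (\ref{wleiy}). Your added remarks on the sharp $L^p$ range for the finiteness of $I_n^{j+k+\a}h$ make explicit what the paper leaves implicit, but the argument is the same.
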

\begin{proof} By (\ref{heit1}),
\[
c^{-1} R_j \bbd_n^{j+k} R_k^* \rjk f=c^{-1} R_j \bbd_n^{j+k} R_k^* \rjk R_j h=R_j \bbd_n^{j+k} I_n^{j+k} h= R_j h=f.\]
Further, combining (\ref{heit1}) with the semigroup property of Riesz potentials, we obtain
\[
 I_n^\a R_k^* \,\rjk f \!=\! I_n^\a  R_k^* \,\rjk R_j h = I_n^\a  I_n^{j+k} h =   I_n^{j+k+\a} h.\]
However, by (\ref{ktyu}) and (\ref{wleiy}),
\[ I_n^\a  R_k^* \,\rjk f= R_k^* I_{n-k}^\a  \,\rjk f =R_k^*   \,\rjk I_{n-j}^\a f.\]
 This gives (\ref{ktyu88a}).
\end{proof}

The formula (\ref{ktyu88a}) can be used if we want to replace the nonlocal  Riesz fractional derivative by the local one. For example, if
$j+k$ is odd and  $j+k<n-1$, we can apply (\ref{ktyu88a}) with $\a=1$.

\begin{remark}  If $f$ is good enough, then (\ref{ktyu88}) formally agrees with the inversion formula of Gonzalez \cite[Theorem 3.4]{Go87}. In our notation his formula reads
\be\label{ktyu88Go} f \!=\! \tilde c^{-1} \rkj \bbd_{n-k}^{n-1}  \rjk f, \quad \tilde c\!=\!2^{n-1} \pi^{(n-3)/2} \Gam \left(\frac{j\!+\!1}{2}\right) \Gam \left(\frac{k\!+\!1}{2}\right). \ee
The following non-rigorous reasoning shows  the consistency of (\ref{ktyu88}) and (\ref{ktyu88Go}). It  suffices to show that
\be\label{ktyu88Go1}
c^{-1} R_j \bbd_n^{n-1} R_k^*\vp = \tilde c^{-1} \,\rkj \bbd_{n-k}^{n-1} \vp,\ee
 where $c$ is the constant from (\ref{ktyu88}) with $j+k=n-1$. We set $\vp=R_k h$ and apply $R_j^*$ to both sides of (\ref{ktyu88Go1}). For the left-hand side,  (\ref{kryafu}) yields
\bea
&&c^{-1} R_j^*R_j \bbd_n^{n-1} R_k^* R_k h = c^{-1}c_{j,n}c_{k,n} I_n^{j}\,\bbd_n^{n-1} I_n^{k} h\nonumber\\
\label {lsf7t}&&=\frac{\pi^{1/2}
 \Gam (n/2)}{ \Gam ((n-j)/2)\, \Gam ((n-k)/2)}\, h.\eea
For the right-hand side,  the  formula   $\bbd_{n-k}^{n-1} R_k h=R_k\bbd_{n}^{n-1} h $ (cf. (\ref{ktyu})) in conjunction with    (\ref{heit1}) gives
\[
\tilde c^{-1} R_j^*  \rkj \bbd_{n-k}^{n-1} R_k h =\tilde c^{-1}R_j^*  \rkj R_k  \bbd_{n}^{n-1} h =c_1\, I_n^{n-1}  \bbd_{n}^{n-1} h =c_1\, h,\]
where $c_1$ is exactly the same as in (\ref{lsf7t}).
Thus, if $R_j^*$ is injective and the class of functions $f$ is good enough, we are done.

Note that the proof of convergence of the expression on right-hand side of (\ref{ktyu88Go}) is  rather nontrivial, even for $f\in C_c^\infty (\agnj)$.
\end{remark}

\section{Appendix }

{\it Proof of  (\ref{puor}).}  Let us show that  (\ref{puor}) follows from  (\ref{pf4e}). It suffices to assume $f_0 \ge 0$.
 We recall that $\ell =n-j-k\ge 1$ and the letter $c$ stands  for a constant that can be different  at each occurrence. Let
\[F(r)=\intl_r^{\infty}\!f_0(t)\,(t^2\!-\!r^2)^{k/2-1} t\,dt\,.\]
For any $0<\a<\b<\infty$ we have
\bea\label{ap1}
&&\intl_\a^\b (I_{j,k} f_0)(s)\,ds\le c \intl_\a^\b ds\intl_0^s(s^2\!-\!r^2)^{j/2-1} r^{\ell-1}
F(r)\,dr\nonumber\\
&&\le c\intl_\a^\b ds\intl_0^s(s\!-\!r)^{j/2-1} r^{\ell-1}F(r)\,dr\nonumber\\
&&= c\intl_0^\a  r^{\ell-1} F(r)\, dr\intl_\a^\b (s-\!r)^{j/2-1}ds+c\intl_\a^\b  r^{\ell-1}F(r)  dr\intl_r^\b (s-\!r)^{j/2-1}ds\nonumber\\
&&\label {lare}\le c\intl_0^\b  r^{\ell-1}  (\b-\!r)^{j/2}F(r)\,dr-c\intl_0^\a  r^{\ell-1}  (\a-\!r)^{j/2}F(r)\,dr.
\eea
Because the  integrals in (\ref{lare}) have the same form, it suffices to show that
\bea
I(\a)\equiv \intl_0^\a  r^{\ell-1}  (\a-\!r)^{j/2}F(r)\,dr<\infty\,\qquad \forall \;\a>0.\nonumber
\eea
We have
\bea
&&I(\a)=\intl_0^\a  r^{\ell-1}  (\a-\!r)^{j/2}dr\intl_r^{\infty}\!f_0(t)(t-\!r)^{k/2-1} t^{k/2} dt\nonumber\\
&&\le c\intl_0^\a \!f_0(t)\,t^{k/2+\ell-1}dt\! \intl_0^t(t\!-\!r)^{k/2-1} dr\!+\!c\!\intl_\a^\infty \! f_0(t)\,t^{k/2} dt\!\intl_0^\a (t\!-\!r)^{k/2-1} dr\nonumber\\
&&=c \intl_0^\a f_0(t)t^{k+\ell-1}dt+c\intl_\a^\infty  f_0(t)\,t^{k/2} (t^{k/2}-(t-\a)^{k/2}) \,dt\nonumber\\
&&\le c\intl_0^\a f_0(t)t^{n-j-1}dt+c\intl_\a^\infty  f_0(t)\,t^{k-1} dt<\infty\,.\nonumber
\eea
The last expression is finite by  (\ref{pf4e}).

\end{document}